\newcites{sup}{Supplementary References}
\newlength{\Sfloatsep}
\newlength{\Stextfloatsep}
\newlength{\Sintextsep}
\definecolor{darkgreen}{rgb}{0,.4,.2}
\definecolor{darkblue}{rgb}{.1,.2,.6}
\definecolor{brightblue}{rgb}{0,0.6,0.8}
\newtheorem*{rep@theorem}{\rep@title}
\newcommand{\newreptheorem}[2]{%
\newenvironment{rep#1}[1]{%
 \def\rep@title{#2 \ref{##1}}%
 \begin{rep@theorem}}%
 {\end{rep@theorem}}}
\newtheorem{definition}{Definition}
\newtheorem{theorem}[definition]{Theorem}
\newtheorem{lemma}[definition]{Lemma}
\newtheorem{remark}[definition]{Remark}
\DeclareMathOperator*{\conv}{conv}
\DeclareMathOperator{\diam}{diam}
\DeclareMathOperator*{\argmin}{\arg\min}
\DeclareMathOperator*{\argmax}{\arg\max}
\providecommand{\norm}[1]{\left\lVert#1\right\rVert}
\newcommand{\R}{\mathbb{R}}
\newcommand{\X}{\mathcal{X}}
\newcommand{\domain}{\mathcal{M}} %
\newcommand{\stepsize}{\gamma}
\newcommand{\stepmax}{\stepsize_{\textnormal{\scriptsize max}}} %
\newcommand{\stepbound}{\stepsize^\textnormal{\scriptsize B}} %
\newcommand{\FW}{{\hspace{0.05em}\textnormal{FW}}}
\newcommand{\PFW}{{\hspace{0.05em}\textnormal{PFW}}}
\newcommand{\away}{{\hspace{0.06em}\textnormal{\scriptsize A}}}
\newcommand{\Cf}{C_{\hspace{-0.08em}f}}
\newcommand{\CfAFW}{C_{\hspace{-0.08em}f}^\away}
\newcommand{\strongConvAFW}{\mu_{\hspace{-0.08em}f}^\away}
\newcommand{\dirW}{\mathop{dirW}}
\newcommand{\x}{\bm{x}}
\newcommand{\y}{\bm{y}}
\newcommand{\z}{\bm{z}}
\newcommand{\s}{\bm{s}}
\newcommand{\dd}{\bm{d}}
\newcommand{\vv}{\bm{v}} %
\DeclareMathOperator*{\lmo}{LMO_{\!\Vertices}}
\newcommand{\VVertices}{\mathcal{V}}
\newcommand{\Vertices}{\mathcal{A}} %
\newcommand{\Coreset}{\mathcal{S}}
\renewcommand{\S}{\mathcal{S}}
\renewcommand{\aa}{\bm{\alpha}}
\renewcommand{\r}{\bm{r}}
\newcommand{\PdirW}{\mathop{PdirW}}
\newcommand{\PWidth}{\mathop{PW\!idth}}
\newcommand{\innerProd}[2]{\left\langle #1 , #2 \right\rangle}
\newcommand{\innerProdCompressed}[2]{\langle #1 , #2 \rangle}
\newcommand{\Kface}{\mathcal{K}}
\newcommand{\A}{\bm{A}}
\newcommand{\B}{\bm{B}}
\newcommand{\bv}{\bm{b}}
\newcommand{\err}{\bm{e}} %
\newcommand{\xdim}{d}  %
\newcommand{\ydim}{p}  %
\newcommand{\strongConvGeneralized}{\tilde{\mu}_{\hspace{-0.08em}f}}
\newcommand{\0}{\mathbf{0}} %
\newcommand{\ignore}[1]{}%
\newcommand{\remove}[1]{} %
\title{On the Global Linear Convergence \\ of Frank-Wolfe Optimization Variants}
\author{
Simon Lacoste-Julien \\
INRIA - SIERRA project-team\\
{\'E}cole Normale Sup{\'e}rieure, Paris, France \\
\And
Martin Jaggi \\
Dept. of Computer Science \\
ETH Z{\"u}rich, Switzerland \\
}
\begin{document}

\maketitle
\vspace{-2mm}

\begin{abstract}\vspace{-2mm}
The Frank-Wolfe (FW) optimization algorithm has lately re-gained popularity
thanks in particular to its ability to nicely handle the structured
constraints appearing in machine learning applications. However, its
convergence rate is known to be slow (sublinear) when the solution lies at
the boundary. A simple less-known fix is to add the possibility to take `away
steps' during optimization, an operation that importantly \emph{does not}
require a feasibility oracle. %
In this paper, we highlight and clarify several variants of the Frank-Wolfe
optimization algorithm that have been successfully applied in practice: 
away-steps FW, pairwise FW, fully-corrective FW and Wolfe's minimum norm
point algorithm, and prove for the first time that they all enjoy global
linear convergence, under a weaker condition than strong convexity of the objective.
The constant in the convergence rate has an elegant interpretation as the product
of the (classical) condition number of the function with a novel geometric
quantity that plays the role of a `condition number' of the constraint set. 
We provide pointers to where these algorithms have made a difference in
practice, in particular with the flow polytope, the
marginal polytope and the base polytope for submodular optimization.

\end{abstract}

The Frank-Wolfe algorithm \citep{Frank:1956vp} (also known as
\emph{conditional gradient}) is one of the earliest existing methods for
constrained convex optimization, and has seen an impressive revival recently
due to its nice properties compared to projected or proximal gradient
methods, in particular for sparse optimization and machine learning
applications.

On the other hand, the classical projected gradient and proximal methods have
been known to exhibit a very nice adaptive acceleration property, 
namely that the the convergence rate becomes linear for strongly convex
objective, i.e. that the optimization error of the same algorithm after $t$
iterations will decrease geometrically with $O((1-\rho)^{t})$ instead of the
usual $O(1/t)$ for general convex objective functions.
It has become an active research topic recently whether such an acceleration
is also possible for Frank-Wolfe type methods.

\vspace{-2mm}
\paragraph{Contributions.}
We clarify several variants of the Frank-Wolfe algorithm and show that they all converge linearly for any strongly convex function optimized over a polytope domain, with a constant bounded away from zero that only depends on the geometry of the polytope. 
Our analysis does \emph{not} depend on the location of the true optimum with
respect to the domain, which was a disadvantage of earlier existing results
such as \citep{Wolfe:1970wy,Guelat:1986fq,Beck:2004jm}, and the newer work of \citep{Pena:2015ta},
as well as the line of work of 
\citep{Ahipasaoglu:2008il,Kumar:2010ku,Nanculef:2014bj} which rely on
Robinson's condition \citep{Robinson:1982ii}.
Our analysis yields a weaker
sufficient condition than Robinson's condition; in particular we can have
linear convergence even in some cases when the function has more than one
global minima, and is not globally strongly convex. The constant also naturally separates as the product of the condition number of the function with a novel notion of condition number of a polytope, which might have applications in complexity theory.

\vspace{-2mm}
\paragraph{Related Work.}
For the classical Frank-Wolfe algorithm, 
\citep{Beck:2004jm} showed a linear rate for the special case of quadratic
objectives when the optimum is in the strict interior of the domain, 
a result already subsumed by the more general~\citep{Guelat:1986fq}.
The early work of~\citep{Levitin:1966gf} %
showed linear convergence for \emph{strongly convex constraint sets},
under the strong requirement that the gradient norm is not too small
(see~\citep{Garber:2015vq} for a discussion).
The away-steps variant of the Frank-Wolfe algorithm, that can also remove
weight from `bad' atoms in the current active set, was proposed in
\citep{Wolfe:1970wy}, and later also analyzed in~\citep{Guelat:1986fq}.
The precise method is stated below in Algorithm~\ref{alg:AFW}.
\citep{Guelat:1986fq} showed a (local) linear convergence rate on polytopes,
but the constant unfortunately depends on the
distance between the solution and its relative boundary, a
quantity that can be arbitrarily small.
More recently, \citep{Ahipasaoglu:2008il,Kumar:2010ku,Nanculef:2014bj} have
obtained linear convergence results in the case that the optimum solution
satisfies Robinson's condition \citep{Robinson:1982ii}.
In a different recent line of work, \citep{Garber:2013vl,%
Lan:2013um} have
studied a variation of FW %
that repeatedly moves mass from the worst vertices to the standard FW vertex until
a specific condition is satisfied, yielding a linear rate on strongly convex functions. 
Their algorithm requires the knowledge of several constants though, and
moreover is not adaptive to the best-case scenario, unlike the Frank-Wolfe
algorithm with away steps and line-search. 
None of these previous works was shown to be affine invariant, and most
require additional knowledge about problem specific parameters. %

\vspace{-2mm}
\paragraph{Setup.}
We consider general constrained convex optimization problems of the form:
\begin{equation}\label{eq:optGenConvex}
	\min_{\x \in \domain} \, f(\x) \ , \quad\quad  \domain = \conv({\Vertices}), 
	\qquad \text{with only access to: } \,\, \lmo(\r) \in  \argmin_{\x\in \Vertices} \innerProdCompressed{\r}{\x}  ,\vspace{-2mm}
\end{equation}
where $\Vertices \subseteq \R^{\xdim}$ is a \emph{finite} set of vectors that
we call \emph{atoms}.\footnote{The atoms \emph{do not} have to be extreme
points (vertices) of $\domain$.} 
We assume that the function~$f$ is
$\mu$-strongly convex with $L$-Lipschitz continuous gradient over~$\domain$. We also
consider weaker conditions than strong convexity for~$f$ in
Section~\ref{sec:nonStronglyConvex}.
As~$\Vertices$ is finite, $\domain$ is a
(convex and bounded) polytope. The methods that we consider in this paper
only require access to a \emph{linear minimization oracle} $\lmo(.)$
associated with the domain~$\domain$ through a generating set
of atoms~$\Vertices$.
This oracle is defined as to return a minimizer of a linear subproblem
over~$\domain=\conv(\Vertices)$,
for any given direction $\r \in \R^\xdim$.\footnote{%
All our convergence results can be carefully extended to approximate linear
minimization oracles with multiplicative approximation guarantees; we state
them for exact oracles in this paper for simplicity.%
}
\vspace{-3mm}

\paragraph{Examples.} Optimization problems of the form
\eqref{eq:optGenConvex} appear widely in machine learning and signal
processing applications. The set of atoms $\Vertices$ can represent
combinatorial objects of arbitrary type. 
Efficient linear minimization oracles often exist in the form of dynamic
programs or other combinatorial optimization approaches. As an example from
tracking in computer vision, $\Vertices$ could be the set of integer flows on a
graph~\citep{Joulin:2014uw,Chari:2015vo}, where $\lmo$ can be efficiently implemented by a
minimum cost network flow algorithm. In this case, $\domain$ can also be
described with a polynomial number of linear inequalities. But in other
examples, $\domain$ might not have a polynomial description in terms of
linear inequalities, and testing membership in $\domain$ might be much more
expensive than running the linear oracle. This is the case when optimizing
over the \emph{base polytope}, an object appearing in submodular function
optimization~\citep{Bach:2013et}. There, the $\lmo$ oracle is a simple greedy
algorithm. Another example is when $\Vertices$ represents the possible
consistent value assignments on cliques of a Markov random field (MRF);
$\domain$ is the \emph{marginal
polytope}~\citep{wainwright:2008:variational}, where testing membership is
NP-hard in general, though efficient linear oracles exist for some special
cases~\citep{Kolmogorov:2004:graphCut}. Optimization %
over the marginal polytope appears for example in structured SVM
learning~\citep{LacosteJulien:2013ue} and variational %
inference~\citep{Krishnan:2015ws}.\vspace{-2mm}

\paragraph{The Original Frank-Wolfe Algorithm.}
The Frank-Wolfe (FW) optimization algorithm~\citep{Frank:1956vp}, also known
as \emph{conditional gradient}~\citep{Levitin:1966gf}, is particularly suited
for the setup~\eqref{eq:optGenConvex} where $\domain$ is only accessed
through the linear minimization oracle.
It works as follows:
At a current iterate~$\x^{(t)}$, the algorithm finds a feasible search atom
$\s_t$ to move towards by minimizing the linearization of the objective
function $f$ over $\domain$ (line~3 in Algorithm~\ref{alg:AFW}) -- this is
where the linear minimization oracle $\lmo$ is used.
The next iterate $\x^{(t+1)}$ is then obtained by doing a line-search on $f$
between $\x^{(t)}$ and $\s_t$ (line~11 in Algorithm~\ref{alg:AFW}). One
reason for the recent increased popularity of Frank-Wolfe-type algorithms is
the sparsity of their iterates: in iteration $t$ of the algorithm, the
iterate can be represented as a sparse convex combination of at most $t+1$
atoms $\Coreset^{(t)}\subseteq \Vertices$ of the domain~$\domain$, which we
write as $\x^{(t)}= \sum_{\vv \in \Coreset^{(t)}} \alpha^{(t)}_{\vv} \vv\vspace{-0.3mm}$. 
We write $\Coreset^{(t)}$ for the \emph{active set}, containing the
previously discovered search atoms $\s_r$ for $r < t$ that have non-zero
\emph{weight} $\alpha^{(t)}_{\s_r} > 0$ in the expansion (potentially also
including the starting point~$\x^{(0)}$).
While tracking the active set $\Coreset^{(t)}$ is not necessary for the
original FW algorithm, the improved variants of FW that we discuss will require that $\Coreset^{(t)}$ is maintained.

\vspace{-2mm}
\paragraph{Zig-Zagging Phenomenon.} %
When the optimal solution lies at the boundary of $\domain$, the convergence rate of
the iterates is slow, i.e. sublinear: $f(\x^{(t)}) - f(\x^*) \le O\big(1/t\big)$, for
$\x^*$ being an optimal
solution~\citep{Frank:1956vp,Canon:1968du,Dunn:1979da,Jaggi:2013wg}. %
This is because the iterates of the classical FW algorithm start to
zig-zag between the vertices defining the face containing the
solution $\x^*$ (see left of
Figure~\ref{fig:FWzigzag}). In fact, the $1/t$ rate is tight for a large class of functions: \citet{Canon:1968du,Wolfe:1970wy}
showed (roughly) that
$f(\x^{(t)}) - f(\x^*) \geq \Omega\big(1/t^{1+\delta}\big)$ for any $\delta
>0$ when $\x^*$ lies on a face of $\domain$ with some additional regularity
assumptions.
Note that this lower bound is different than the $\Omega\big(1/t\big)$ one
presented in \citep[Lemma 3]{Jaggi:2013wg} which holds for all one-atom-per-step algorithms but assumes high dimensionality $d \geq t$.

\begin{figure}[t]
  \vspace{-2.5mm}
  \centering
  \includegraphics[width = 0.32\textwidth]{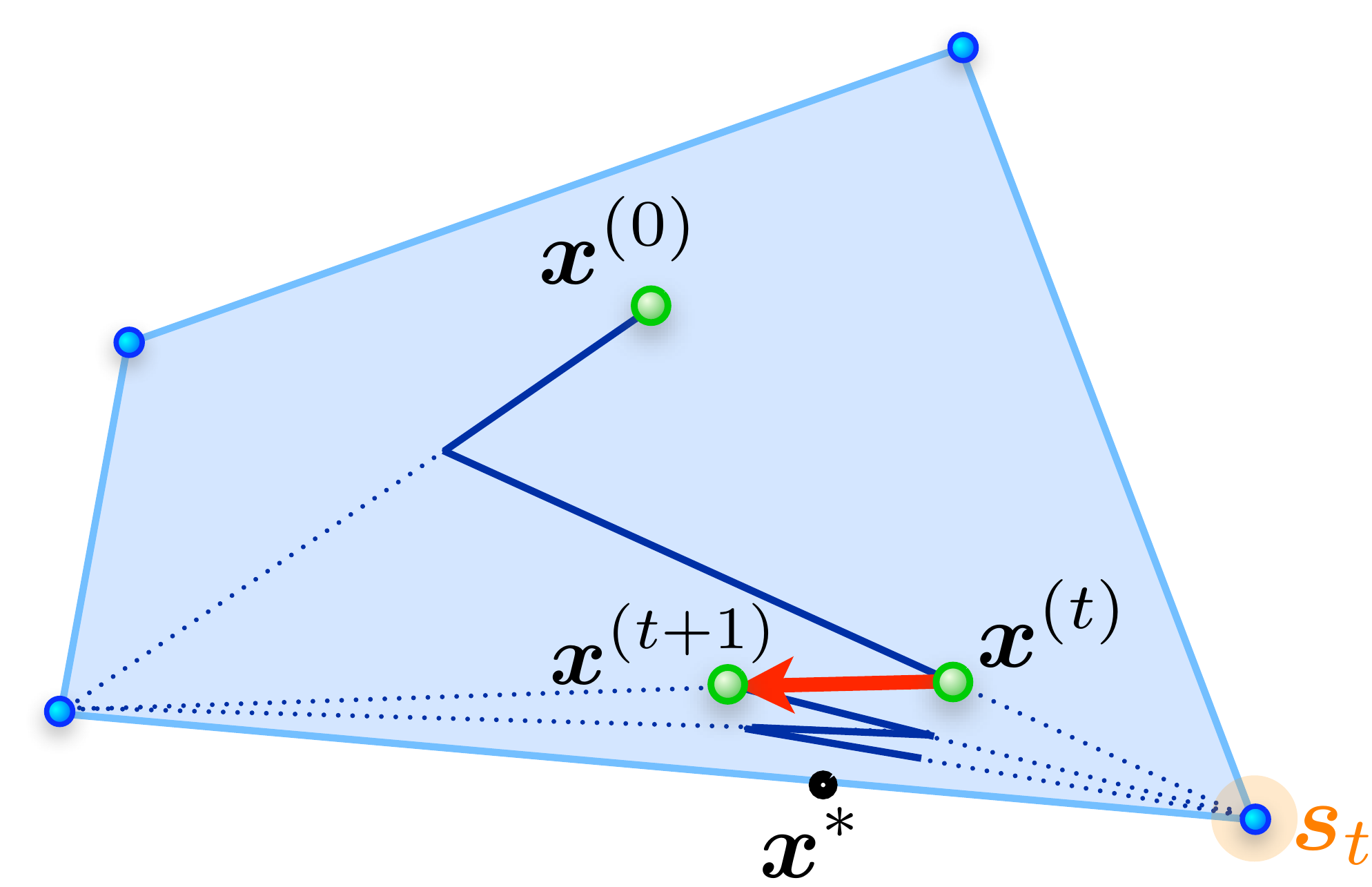} 
  \includegraphics[width = 0.32\textwidth]{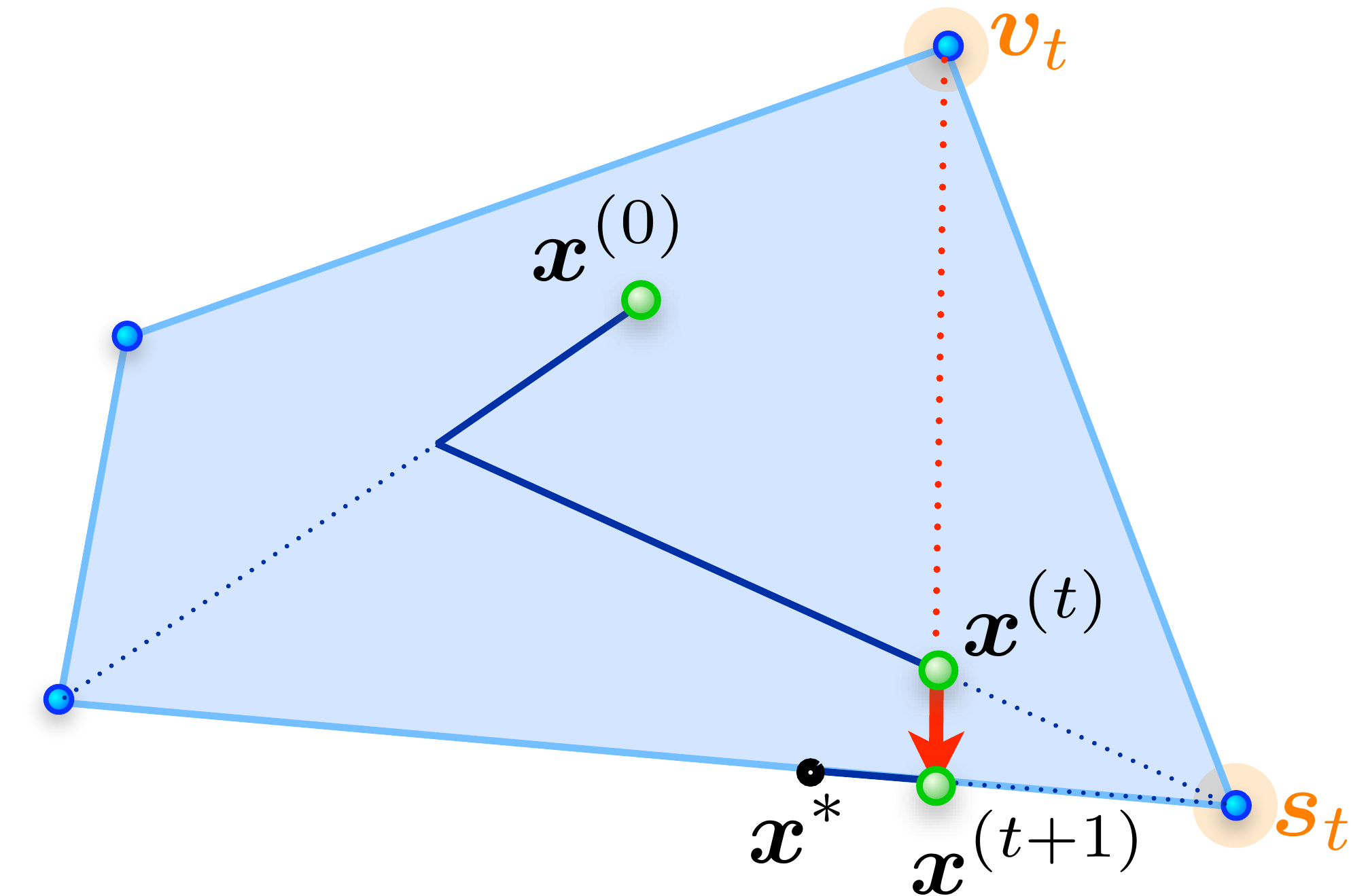} 
  \includegraphics[width = 0.32\textwidth]{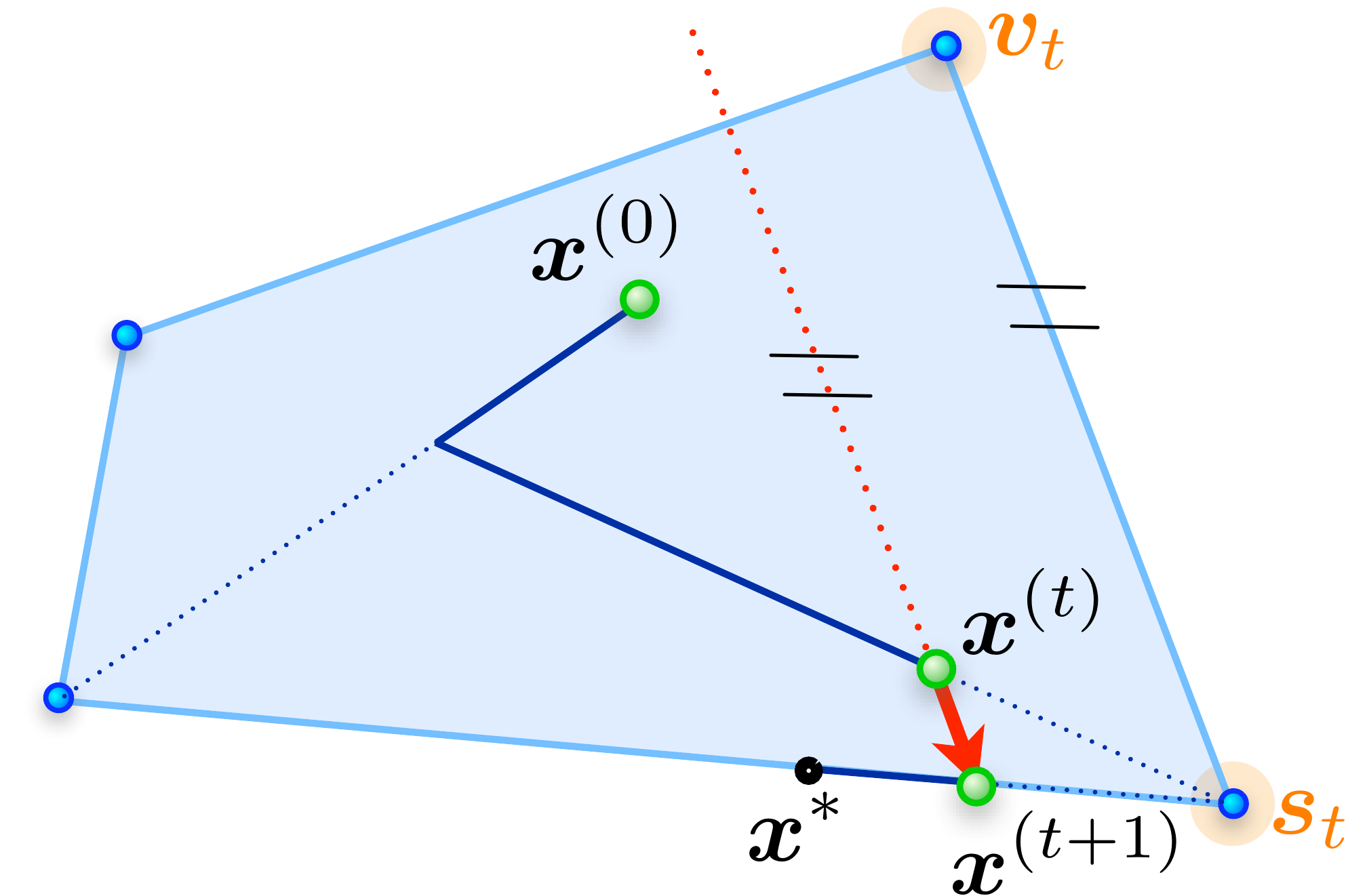} 
  \vspace{-2mm}
  \caption{\small (left) The FW algorithm zig-zags when the solution $\x^*$ lies on the boundary. (middle) Adding the possibility of an \emph{away step} attenuates this problem. (right) As an alternative, a pairwise FW step. 
  }
  \label{fig:FWzigzag} \vspace{1mm}
\end{figure}

\vspace{-2mm}
\section{Improved Variants of the Frank-Wolfe Algorithm}%
\label{sec:variants}\vspace{-2mm}

\setlength{\intextsep}{2mm}

\begin{algorithm}
	\caption{Away-steps Frank-Wolfe algorithm: \textbf{AFW}$(\x^{(0)}, \Vertices, \epsilon)$}
	\label{alg:AFW}
	\begin{algorithmic}[1]
	\STATE Let $\x^{(0)} \in \Vertices$, and $\Coreset^{(0)} := \{\x^{(0)}\}$
	   \qquad \emph{\small(so that $\alpha^{(0)}_{\vv} = 1$ for $\vv=\x^{(0)}$ and $0$ otherwise)}
	\FOR{$t=0\dots T$}
		\STATE Let $\s_t := \lmo \!\left(\nabla f(\x^{(t)})\right)$ %
		   and $\dd_t^\FW := \s_t - \x^{(t)}$ \qquad~~ \emph{\small(the FW direction)}
		\STATE Let $\vv_t \in \displaystyle\argmax_{\vv \in \Coreset^{(t)} } \textstyle\left\langle \nabla f(\x^{(t)}), \vv \right\rangle$ and $\dd_t^\away := \x^{(t)} - \vv_t$ \qquad \emph{\small(the away direction)}
		\STATE \textbf{if} $g_t^\FW  := \left\langle -\nabla f(\x^{(t)}), \dd_t^\FW\right\rangle  \leq \epsilon$ \textbf{then} \textbf{return} $\x^{(t)}$ \hspace{12mm}\emph{\small(FW gap is small enough, so return)}
		  \IF{$\left\langle -\nabla f(\x^{(t)}), \dd_t^\FW\right\rangle  \geq \left\langle -\nabla f(\x^{(t)}), \dd_t^\away\right\rangle$ }
		  \STATE $\dd_t :=  \dd_t^\FW$, and $\stepmax := 1$  
			     \hspace{22mm}\emph{\small(choose the FW direction)}
		  \ELSE
		  \STATE $\dd_t :=  \dd_t^\away$, and $\stepmax := \alpha_{\vv_t} / (1- \alpha_{\vv_t})$
		  	\hspace{5mm}\emph{\small(choose away direction; maximum feasible step-size)}
		  \ENDIF	
		  \STATE Line-search: $\stepsize_t \in \displaystyle\argmin_{\stepsize \in [0,\stepmax]} \textstyle f\left(\x^{(t)} + \stepsize \dd_t\right)$ 
		  \STATE Update $\x^{(t+1)} := \x^{(t)} + \stepsize_t \dd_t$  
		  	\hspace{2cm}\emph{\small(and accordingly for the weights $\aa^{(t+1)}$, see text)}
		  \STATE Update $\Coreset^{(t+1)} := \{\vv \in \Vertices \,\: \mathrm{ s.t. } \,\: \alpha^{(t+1)}_{\vv} > 0\}$
	\ENDFOR
	\end{algorithmic}
\end{algorithm}
\begin{algorithm}
	\caption{Pairwise Frank-Wolfe algorithm: \textbf{PFW}$(\x^{(0)}, \Vertices, \epsilon)$}
	\label{alg:PFW}
	\begin{algorithmic}[1]
	\STATE $\ldots$ as in Algorithm~\ref{alg:AFW}, except replacing lines~6 to~10 by:  $\,\, \dd_t = \dd^\PFW_t \!\!\!:= \s_t - \vv_t$, and $\stepmax := \alpha_{\vv_t}$.
	\end{algorithmic}
\end{algorithm}

\setlength{\intextsep}{\Sintextsep}

\vspace{-1.5mm}
\paragraph{Away-Steps Frank-Wolfe.}
To address the zig-zagging problem of FW, \citet{Wolfe:1970wy} proposed to
add the possibility to move \emph{away} from an active atom in
$\Coreset^{(t)}$ (see middle of Figure~\ref{fig:FWzigzag}); this simple
modification is sufficient to make the algorithm linearly convergent for
strongly convex functions. We describe the away-steps variant of Frank-Wolfe
in Algorithm~\ref{alg:AFW}.\footnote{%
The original algorithm presented in~\citep{Wolfe:1970wy} was not convergent;
this was corrected by~\citet{Guelat:1986fq}, %
assuming a tractable representation of~$\domain$ with linear inequalities and
called it the modified Frank-Wolfe (MFW) algorithm. Our description in
Algorithm~\ref{alg:AFW} extends it to the more general setup
of~\eqref{eq:optGenConvex}.
}
The \emph{away} direction $\dd_t^\away$ is defined in line~4 by finding the
atom $\vv_t$ in~$\Coreset^{(t)}$ that maximizes the potential of descent
given by $g_t^\away := \innerProd{-\nabla f(\x^{(t)})}{\x^{(t)} - \vv_t}$. 
Note that this search is over the (typically small) active set $\Coreset^{(t)}$, and is
fundamentally easier than the linear oracle $\lmo$. %
The maximum step-size $\stepmax$ as defined on line~9 ensures that the new
iterate $\x^{(t)} + \stepsize \dd_t^\away$ stays in $\domain$. %
In fact, this guarantees that the convex representation is maintained, and we 
stay inside $\conv(\Coreset^{(t)}) \subseteq \domain$.
When $\domain$ is a simplex, then the barycentric coordinates are unique and
$\x^{(t)} + \stepmax \dd_t^\away$ truly lies on the boundary of $\domain$. 
On the other hand, if $|\Vertices| > \dim(\domain)+1$ (e.g. for the cube), 
then it could hypothetically be possible to have a step-size bigger than $\stepmax$ which is still 
feasible. %
Computing the true
maximum feasible step-size would require the ability to know when we cross
the boundary of~$\domain$ along a specific line, which is not possible for
general~$\domain$. Using the conservative maximum step-size of line~9
ensures that we do not need this more powerful oracle. This is why
Algorithm~\ref{alg:AFW} requires to maintain $\Coreset^{(t)}$ (unlike
standard FW). %
Finally, as in classical FW, the FW gap $g_t^\FW$ is an upper bound on the unknown
suboptimality, and can be used as a stopping criterion: \vspace{-1mm}
\begin{equation*}
g_t^\FW := \innerProd{-\nabla f(\x^{(t)})}{\dd_t^\FW} \geq \innerProd{-\nabla
f(\x^{(t)})}{\x^* - \x^{(t)}} \geq f(\x^{(t)}) - f(\x^*) \quad \text{ (by
convexity)}.\vspace{-3mm}
\end{equation*}

If $\stepsize_t = \stepmax$, then we call this step a \emph{drop step}, as it
fully removes the atom $\vv_t$ from the currently active set of atoms
$\Coreset^{(t)}$ (by settings its weight to zero).
The weight updates for lines~12 and~13 are of the following form:
For a FW step, we have $\Coreset^{(t+1)} = \{\s_t\}$ if $\stepsize_t = 1$;
otherwise $\Coreset^{(t+1)} = \Coreset^{(t)} \cup \{\s_t\}$. Also, we have
$\alpha^{(t+1)}_{\s_t} := (1-\stepsize_t) \alpha^{(t)}_{\s_t} + \stepsize_t$
and $\alpha^{(t+1)}_{\vv} := (1-\stepsize_t) \alpha^{(t)}_{\vv}$ for $\vv \in
\Coreset^{(t)} \setminus  \{\s_t\}$. 
For an away step, we have $\Coreset^{(t+1)} = \Coreset^{(t)} \setminus
\{\vv_t\}$ if  $\stepsize_t = \stepmax$ (a \emph{drop step}); 
otherwise $\Coreset^{(t+1)} = \Coreset^{(t)}$.  Also, we have
$\alpha^{(t+1)}_{\vv_t} := (1+\stepsize_t) \alpha^{(t)}_{\vv_t} -
\stepsize_t$ and $\alpha^{(t+1)}_{\vv} := (1+\stepsize_t) \alpha^{(t)}_{\vv}$
for $\vv \in \Coreset^{(t)} \setminus  \{\vv_t\}$. \vspace{-2mm}

\paragraph{Pairwise Frank-Wolfe.}
The next variant that we present is inspired by an early algorithm
by~\citet{Mitchell:1974uy}, called the MDM %
algorithm, originally invented for the polytope distance problem. Here the
idea is to only move weight mass between two atoms in each step. More
precisely, the generalized method as presented in Algorithm~\ref{alg:PFW}
moves weight from the away atom~$\vv_t$ to the FW atom~$\s_t$, and keeps all
other $\alpha$ weights un-changed. 
We call such a swap of mass between the two atoms a \emph{pairwise FW} step,
i.e. $\alpha_{\vv_t}^{(t+1)} = \alpha_{\vv_t}^{(t)} - \stepsize$ and
$\alpha_{\s_t}^{(t+1)} = \alpha_{\s_t}^{(t)} + \stepsize$ for some step-size
$\stepsize \leq \stepmax := \alpha_{\vv_t}^{(t)}$. 
In contrast, classical FW shrinks all active weights at every iteration.

The pairwise FW direction will also be central to our proof technique to
provide the first global linear convergence rate for away-steps FW, as well
as the fully-corrective variant and Wolfe's min-norm-point algorithm. 

As we will see in Section~\ref{sec:theorems}, the rate guarantee for the
pairwise FW variant is more loose than for the other variants, because we cannot provide a satisfactory bound on the number of the problematic \emph{swap steps} (defined just before Theorem~\ref{thm:megaConvergenceTheorem}).
Nevertheless, the algorithm seems to perform quite well in practice, often
outperforming away-steps FW, especially in the important case of
sparse solutions, that is if the optimal solution
$\x^*$ lies on a low-dimensional face of $\domain$ (and thus one wants to
keep the active set $\Coreset^{(t)}$ small). The pairwise FW step is
arguably more efficient at pruning the coordinates in $\Coreset^{(t)}\!$. In
contrast to the away step which moves the mass back \emph{uniformly} onto
all other active elements $\Coreset^{(t)}$ (and might require more
corrections later), the pairwise FW step only moves the mass onto the (good) FW
atom $\s_t$.
A slightly different version than Algorithm~\ref{alg:PFW} was also proposed
by~\citet{Nanculef:2014bj}, though their convergence proofs were incomplete
(see~Appendix~\ref{sec:PFWdetails}).
The algorithm is related to classical working set algorithms, such as
the SMO algorithm used to train SVMs~\citep{Platt:1999wl}. We refer
to~\citep{Nanculef:2014bj} for an empirical comparison for SVMs, as
well as their Section~5 for more related work. See also Appendix~\ref{sec:PFWdetails} 
for a link between pairwise FW and~\citep{Garber:2013vl}.
\vspace{-2mm}

\paragraph{Fully-Corrective Frank-Wolfe, and Wolfe's Min-Norm Point
Algorithm.}

When the linear oracle is expensive, it might be worthwhile to do more work
to optimize over the active set $\Coreset^{(t)}$ in between each call to the
linear oracle, rather than just performing an away or pairwise step. We give
in Algorithm~\ref{alg:FCFW} the fully-corrective Frank-Wolfe (FCFW) variant,
that maintains a correction polytope defined by a set of atoms
$\Vertices^{(t)}$ (potentially larger than the active set $\Coreset^{(t)}$).
Rather than obtaining the next iterate by line-search, $\x^{(t+1)}$ is
obtained by re-optimizing $f$ over $\conv(\Vertices^{(t)})$. Depending on how
the correction is implemented, and how the correction atoms $\Vertices^{(t)}$
are maintained, several variants can be obtained. 
These variants are known
under many names, such as the extended FW method
by~\citet{Holloway:1974:FCFW} or the simplicial decomposition
method~\citep{Hohenbalken:1977:simplicial,Hearn:1987:simplicial}. 
Wolfe's min-norm point (MNP) algorithm~\citep{Wolfe:1976:MNP} for polytope distance problems is often confused with 
FCFW for quadratic objectives.  The major difference is that standard FCFW optimizes $f$ over $\conv(\Vertices^{(t)})$, whereas MNP implements the correction as a sequence of affine projections that potentially yield a different update, but can be computed more efficiently in several practical applications~\citep{Wolfe:1976:MNP}.
We describe precisely in Appendix~\ref{sec:MNPdetails} a generalization of the MNP algorithm as a specific case of the correction subroutine from step~7 of the generic Algorithm~\ref{alg:FCFW}.\vspace{-1mm}

\begin{algorithm}
	\caption{Fully-corrective Frank-Wolfe with approximate correction: \textbf{FCFW}$(\x^{(0)}, \Vertices, \epsilon)$}
	\label{alg:FCFW}
	\begin{algorithmic}[1]
	\STATE \textbf{Input:} Set of atoms $\Vertices$, active set $\Coreset^{(0)}$, starting point $\x^{(0)}= \displaystyle \sum_{\vv \in \Coreset^{(0)}} \alpha^{(0)}_{\vv} \vv$, stopping criterion $\epsilon$.
	\STATE Let $\Vertices^{(0)} := \Coreset^{(0)}$ \quad (optionally, a bigger $\Vertices^{(0)}$ could be passed as argument for a warm start)
	\FOR{$t=0\dots T$}
		\STATE Let $\s_t := \lmo \!\left(\nabla f(\x^{(t)})\right)$%
			\hspace{4cm} \emph{\small(the FW atom)}
		\STATE Let $\dd_t^\FW := \s_t - \x^{(t)}$ and $g_t^\FW = \innerProd{-\nabla f(\x^{(t)})}{\dd_t^\FW}$ \qquad \emph{\small(FW gap)}
		\STATE \textbf{if} $g_t^\FW \leq \epsilon$ \textbf{then} \textbf{return} $\x^{(t)}$
			\STATE $(\x^{(t+1)}, \Vertices^{(t+1)}) := \textbf{\textrm{Correction}}(\x^{(t)}, \Vertices^{(t)}, \s_t, \epsilon)$ \hspace{1.2cm} \emph{\small (approximate correction step)}
	\ENDFOR
	\end{algorithmic}
\end{algorithm}

\begin{algorithm}
	\caption{Approximate correction: \textbf{Correction}$(\x^{(t)}, \Vertices^{(t)}, \s_t, \epsilon)$}
	\label{alg:correction}
	\begin{algorithmic}[1]
	\STATE Return $(\x^{(t+1)},  \Vertices^{(t+1)})$ with the following properties:
	\STATE \quad $\Coreset^{(t+1)}$ is the active set for $\x^{(t+1)}$ and $ \Vertices^{(t+1)} \supseteq \Coreset^{(t+1)}$.
	\STATE \quad $f(\x^{(t+1)}) \leq \displaystyle\min_{\stepsize \in [0,1]} \textstyle f\left(\x^{(t)} + \stepsize (\s_t - \x^{(t)})\right)$ \qquad \emph{\small (make at least as much progress as a FW step)}
	\STATE \quad $g_{t+1}^\away := \displaystyle\max_{\vv \in \Coreset^{(t+1)} } \textstyle\left\langle -\nabla f(\x^{(t+1)}), \x^{(t+1)} - \vv \right\rangle \leq \epsilon$ \quad \emph{\small (the away gap is small enough)}
	\end{algorithmic}
\end{algorithm}

The original convergence analysis of the FCFW
algorithm~\citep{Holloway:1974:FCFW}  (and also MNP
algorithm~\citep{Wolfe:1976:MNP}) only showed that they were finitely
convergent, with a bound on the number of iterations in terms of the
cardinality of $\Vertices$ (unfortunately an exponential %
number in general). \citet{Holloway:1974:FCFW} also argued that FCFW had an
asymptotic linear convergence based on the flawed argument
of~\citet{Wolfe:1970wy}. As far as we know, our work is the first to provide
global linear convergence rates for FCFW and MNP for general strongly convex
functions. Moreover, the proof of convergence for FCFW does not require
an exact solution to the correction step; instead, we show that the weaker 
properties stated for the approximate correction procedure in Algorithm~\ref{alg:correction} 
are sufficient 
for a global linear convergence rate (this correction could be implemented
using away-steps FW, as done for example in~\cite{Krishnan:2015ws}).
\vspace{-2mm}

\section{Global Linear Convergence Analysis}\vspace{-2mm}

\subsection{Intuition for the Convergence Proofs}\vspace{-2mm}

We first give the general intuition for the linear convergence proof of the
different FW variants, starting from the work of~\citet{Guelat:1986fq}.
We assume that the objective function $f$ is smooth over a compact set $\domain$, i.e. its gradient is Lipschitz continuous with constant $L$. Also let $M := \diam(\domain)$. Let
$\dd_t$ be the direction in which the line-search is executed by the
algorithm (Line~11 in Algorithm~\ref{alg:AFW}). By the standard descent
lemma~\citep[see e.g. (1.2.5) in][]{Nesterov:2004:lectures}, we have:\vspace{-0.5mm}
\begin{equation} \label{eq:descentLemma}
f(\x^{(t+1)}) \leq f(\x^{(t)}+\stepsize \dd_t) \leq f(\x^{(t)}) + \stepsize
\innerProd{\nabla f(\x^{(t)})}{\dd_t} + \frac{\stepsize^2}{2} L \| \dd_t \|^2
\quad \forall \stepsize \in [0, \stepmax].
\end{equation}
We let $\r_t := - \nabla f(\x^{(t)})$ and let $h_t := f(\x^{(t)}) - f(\x^*)$
be the suboptimality error. Supposing for now that $\stepmax \geq
\stepsize_t^* :=  \innerProd{\r_t}{\dd_t} / (L \|\dd_t\|^2$). We can set
$\stepsize = \stepsize_t^*$ to minimize the RHS of~\eqref{eq:descentLemma},
subtract $f(\x^*)$ on both sides, and re-organize to get a lower bound on the
progress:\vspace{-0.5mm}
\begin{equation} \label{eq:lowerProgress}
h_t- h_{t+1} \geq \frac{\innerProdCompressed{\r_t}{\dd_t}^2}{2 L
\|\dd_t\|^2} 
= \frac{1}{2L} \innerProdCompressed{\r_t}{\hat{\dd}_t}^2 \, , \vspace{-0.5mm}
\end{equation}
where we use the `hat' notation to denote normalized vectors: $\hat{\dd}_t := \dd_t / \|\dd_t \|$. 
Let $\err_t := \x^* - \x^{(t)}$ be the error vector. By $\mu$-strong
convexity of $f$, we have:\vspace{-1mm}
\begin{equation} \label{eq:strongLower}
f(\x^{(t)} + \stepsize \err_t) \geq f(\x^{(t)}) + \stepsize \innerProd{\nabla
f(\x^{(t)})}{\err_t} + \frac{\stepsize^2}{2} \mu \| \err_t \|^2 \quad \forall
\stepsize \in [0,1].\vspace{-0.5mm}
\end{equation}
The RHS is lower bounded by its minimum as a function of $\stepsize$
(unconstrained), achieved using $\stepsize := \innerProdCompressed{\r_t}{\err_t} / (\mu \| \err_t \|^2)$.
We are then free to use any
value of $\stepsize$ on the LHS and maintain a valid bound. In particular, we
use $\stepsize = 1$ to obtain $f(\x^*)$. Again re-arranging, we get:\vspace{-0.5mm}
\begin{equation} \label{eq:linearProgressBound}
h_t \leq \frac{\innerProd{\r_t}{\hat{\err}_t}^2}{2 \mu} \, , \quad \text{ and
combining with \eqref{eq:lowerProgress}, we obtain: } \quad h_{t} - h_{t+1}
\geq \frac{\mu}{L}
\frac{\innerProdCompressed{\r_t}{\hat{\dd}_t}^2}{\innerProdCompressed{\r_t}{\hat{\err}_t}^2} \, h_t .
\end{equation}
The inequality~\eqref{eq:linearProgressBound} is
fairly general and valid for any line-search method in direction $\dd_t$. To
get a linear convergence rate, we need to lower bound (by a positive
constant) the term in front of $h_t$ on the RHS, which depends on the angle
between the update direction $\dd_t$ and the negative gradient~$\r_t$. If we
assume that the solution $\x^*$ lies in the relative interior of $\domain$
with a distance of at least $\delta > 0$ from the boundary, then
$\innerProdCompressed{\r_t}{\dd_t} \geq \delta \|\r_t\|$ for the FW direction
$\dd_t^\FW$, and by combining with $\|\dd_t\| \leq M$, we get a linear rate with constant
$1-\frac{\mu}{L}(\frac{\delta}{M})^2$ (this was the result
from~\citep{Guelat:1986fq}). On the other hand, if~$\x^*$ lies
on the boundary, then $\innerProdCompressed{\hat{\r}_t}{\hat{\dd}_t}$ %
gets
arbitrary close to zero for standard FW (the zig-zagging phenomenon) and the
convergence is sublinear.

\paragraph{Proof Sketch for AFW.} The key insight to prove the global linear
convergence for AFW is to relate $\innerProdCompressed{\r_t}{\dd_t}$ with the
\emph{pairwise FW} direction $\dd_t^\PFW := \s_t - \vv_t$. By the way the
direction $\dd_t$ is chosen on lines~6 to~10 of Algorithm~\ref{alg:AFW}, we have:
\begin{equation} \label{eq:AFWgapInequality}
2 \innerProdCompressed{\r_t}{\dd_t} \geq
\innerProdCompressed{\r_t}{\dd_t^\FW} + \innerProdCompressed{\r_t}{\dd_t^A} =
\innerProdCompressed{\r_t}{\dd_t^\FW+\dd_t^A} = 
\innerProdCompressed{\r_t}{\dd_t^\PFW} .  
\end{equation}
We thus have $\innerProdCompressed{\r_t}{\dd_t} \geq
\innerProdCompressed{\r_t}{\dd_t^\PFW} / 2$. Now the crucial property of the
pairwise FW direction is that for any potential negative gradient direction
$\r_t$, the worst case inner product $\innerProdCompressed{\hat{\r}_t}{\dd_t^\PFW}$
\begin{wrapfigure}{r}{3.5cm}\vspace{-3mm}\hspace{-3mm}
\includegraphics[width=1.06\linewidth]{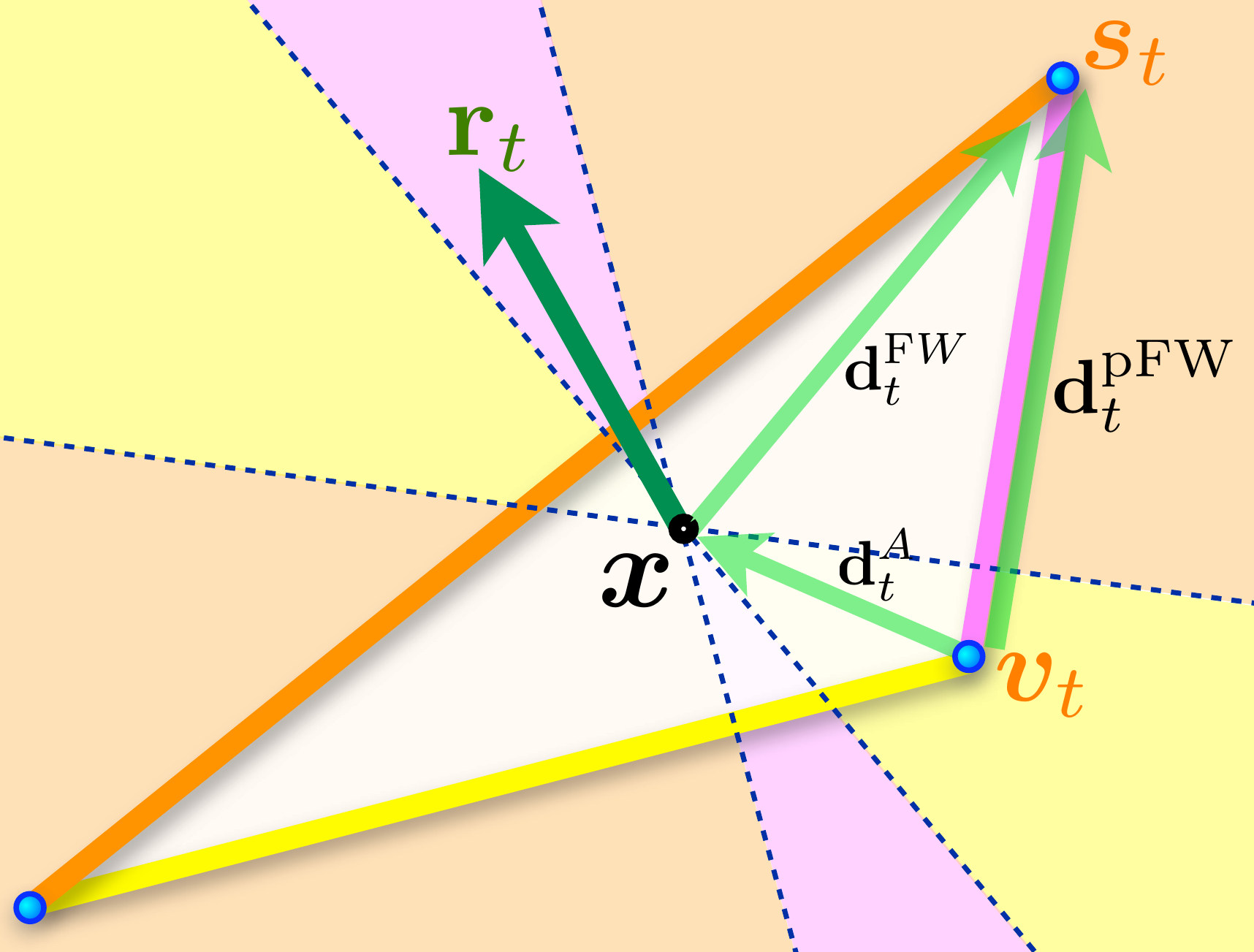}\vspace{-0.5em}
\label{fig:pwidth}
\vspace{-1.5em}
\end{wrapfigure}
can be lower bounded away from
zero by a quantity depending only on the geometry of $\domain$ (unless we are at the optimum).
We call this quantity the \emph{pyramidal width} of $\Vertices$. 
The figure on the right shows the six possible pairwise FW directions $\dd_t^\PFW$ for a triangle domain, depending on which colored area the $\r_t$ direction falls into. We will see that the pyramidal width is
related to the smallest width of pyramids that we can construct from
$\Vertices$ in a specific way related to the choice of the away and towards atoms $\vv_t$ and $\s_t$. See~\eqref{eq:Pwidth} and our main Theorem~\ref{thm:muFdirWinterpretation} in Section~\ref{sec:Pwidth}.

This gives the main argument for the linear convergence of AFW for steps
where $\gamma_t^* \leq \stepmax$. When~$\stepmax$ is too small, AFW will perform a
\emph{drop step}, as the line-search will truncate the step-size to
$\stepsize_t = \stepmax$. We cannot guarantee sufficient progress in this
case, but the drop step decreases the active set size by one, and thus they
cannot happen too often (not more than half the time). These are the main
elements for the global linear convergence proof for AFW. The rest is to
carefully consider various boundary cases.
We can re-use the same 
techniques to prove the convergence for pairwise FW, though unfortunately the
latter also has the possibility of problematic \emph{swap steps}. While their
number can be bounded, so far we only found the extremely loose bound quoted in Theorem~\ref{thm:megaConvergenceTheorem}.\vspace{-2mm}

\paragraph{Proof Sketch for FCFW.} For FCFW, by line~4 of the
correction Algorithm~\ref{alg:correction}, 
the away gap satisfies $g_t^\away \leq \epsilon$
at the beginning of a new iteration.
Supposing that the algorithm does not exit at line~6 of
Algorithm~\ref{alg:FCFW}, we have $g_t^\FW > \epsilon$ and therefore $2
\innerProdCompressed{\r_t}{\dd_t^\FW} \geq
\innerProdCompressed{\r_t}{\dd_t^\PFW}$ using a similar argument as
in~\eqref{eq:AFWgapInequality}. Finally, by line~3 of Algorithm~\ref{alg:correction}, the correction is
guaranteed to make at least as much progress as a line-search in direction
$\dd_t^\FW$, and so the progress bound~\eqref{eq:linearProgressBound} applies also to FCFW.\vspace{-2mm}

\subsection{Convergence Results}\label{sec:theorems}\vspace{-2mm}
We now give the global linear convergence rates for the four variants of the
FW algorithm: away-steps FW (AFW Alg.~\ref{alg:AFW}); pairwise FW
(PFW Alg.~\ref{alg:PFW}); fully-corrective FW (FCFW
Alg.~\ref{alg:FCFW} with approximate correction Alg.~\ref{alg:correction}); and Wolfe's
min-norm point algorithm (Alg.~\ref{alg:FCFW} with
MNP-correction as Alg.~\ref{alg:MNP} in Appendix~\ref{sec:MNPdetails}). For the AFW, MNP  and PFW algorithms, we call
a \emph{drop step} when the active set shrinks $|S^{(t+1)}| < |S^{(t)}|$. For
the PFW algorithm, we also have the possibility of a \emph{swap step} where
$\stepsize_t = \stepmax$ but $|S^{(t+1)}| = |S^{(t)}|$ (i.e. the mass was
fully swapped from the away atom to the FW atom). 
A nice property of FCFW is that it does not have any drop step (it executes both FW steps and away steps simultaneously while guaranteeing enough progress at
every iteration).

\begin{theorem}\label{thm:megaConvergenceTheorem}
Suppose that $f$ has $L$-Lipschitz gradient\footnote{For AFW and PFW, we
actually require that $\nabla f$ is $L$-Lipschitz over the larger
domain $\domain + \domain - \domain$.} and is $\mu$-strongly convex over
$\domain=\conv(\Vertices)$. Let $M=\diam(\domain)$ and $\delta =
\PWidth(\Vertices)$ as defined by~\eqref{eq:Pwidth}.
Then the suboptimality~$h_t$ of the iterates of all the four variants of the FW algorithm %
decreases geometrically at each step that is not a drop step nor a swap step
(i.e. when $\stepsize_t < \stepmax$, called a `good step'), that is\vspace{-1mm}
\[
h_{t+1} \leq \left(1-\rho\right) h_t \ , \quad\quad \text{ where } \rho :=
\frac{\mu}{4L}\left(\frac{\delta}{M}\right)^2. \vspace{-3mm}
\]

Let $k(t)$ be the number of `good steps' up to iteration $t$.
We have $k(t) = t$ for FCFW; $k(t) \geq t/2$ for
MNP and AFW; and $k(t) \geq t/(3|\Vertices|!+1)$ for PFW (because of the swap steps).
This yields a global linear convergence rate of $h_t \leq h_0 \exp(-
\rho \, k(t))$ for all variants. If $\mu=0$ (general convex),
then $h_t = O(1/k(t))$ instead. See Theorem~\ref{thm:megaConvergenceTheorem2} in Appendix~\ref{sec:conv} for an affine invariant version and proof.
\vspace{-1mm}
\end{theorem}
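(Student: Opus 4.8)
The plan is to establish the per-step geometric decrease first, and then convert it into the global rate by bounding the number of bad steps separately for each variant.

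\paragraph{Step 1: The good-step contraction.} Starting from the inequality $h_{t}-h_{t+1}\geq \frac{\mu}{L}\frac{\innerProdCompressed{\r_t}{\hat\dd_t}^2}{\innerProdCompressed{\r_t}{\hat\err_t}^2}h_t$ derived in~\eqref{eq:linearProgressBound}, I would proceed in three moves. First, control the denominator: since $\err_t=\x^*-\x^{(t)}\in\domain-\domain$, we have $\|\err_t\|\leq M$ and hence $\innerProdCompressed{\r_t}{\hat\err_t}\leq\|\r_t\|$; more to the point $\innerProdCompressed{\r_t}{\hat\err_t}^2\leq\|\r_t\|^2$, so it suffices to lower bound $\innerProdCompressed{\r_t}{\hat\dd_t}^2/\|\r_t\|^2=\innerProdCompressed{\hat\r_t}{\hat\dd_t}^2$. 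Second, for AFW and PFW invoke~\eqref{eq:AFWgapInequality} to get $\innerProdCompressed{\r_t}{\dd_t}\geq\frac12\innerProdCompressed{\r_t}{\dd_t^\PFW}$ (and the analogous bound via line~3 of Algorithm~\ref{alg:correction} and $g_t^\FW>\epsilon\geq g_t^\away$ for FCFW/MNP); combined with $\|\dd_t\|\leq\|\dd_t^\PFW\|\leq M$ — here one must be slightly careful that for an away step $\|\dd_t^\away\|\leq M$ too, and that dividing by the larger $\|\dd_t^\PFW\|$ only weakens the bound — this yields $\innerProdCompressed{\hat\r_t}{\hat\dd_t}\geq\frac12\innerProdCompressed{\hat\r_t}{\dd_t^\PFW}$ up to the $M$ normalization, i.e. $\innerProdCompressed{\r_t}{\hat\dd_t}\geq\frac{1}{2M}\innerProdCompressed{\r_t}{\dd_t^\PFW}$. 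Third, apply the pyramidal width bound: by the definition~\eqref{eq:Pwidth} and Theorem~\ref{thm:muFdirWinterpretation}, $\innerProdCompressed{\hat\r_t}{\dd_t^\PFW}\geq\PWidth(\Vertices)=\delta$ whenever $\x^{(t)}\neq\x^*$ (the directions $\s_t,\vv_t$ being exactly the pair realizing the pyramidal-width configuration for the direction $\r_t$ at the face $\conv(\Coreset^{(t)})$). Chaining these gives $\innerProdCompressed{\r_t}{\hat\dd_t}^2/\innerProdCompressed{\r_t}{\hat\err_t}^2\geq\frac{1}{4}(\delta/M)^2$, hence $h_{t+1}\leq(1-\rho)h_t$ with $\rho=\frac{\mu}{4L}(\delta/M)^2$. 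The case $\stepmax<\stepsize_t^*$ is precisely excluded by restricting to good steps, where $\stepsize_t<\stepmax$ and the line-search attains the unconstrained optimum, so~\eqref{eq:lowerProgress} is valid as written.

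\paragraph{Step 2: Counting good steps.} For FCFW every iteration makes at least as much progress as a FW line-search step and introduces no truncation, so there are no drop or swap steps and $k(t)=t$. For AFW and MNP, each iteration is either a good step or a drop step; a drop step reduces $|\Coreset^{(t)}|$ by exactly one, while a non-drop step increases it by at most one, and $|\Coreset^{(0)}|=1$, so over $t$ iterations the number of drop steps is at most the number of non-drop (good) steps plus a constant, giving $k(t)\geq t/2$ (up to the harmless additive constant absorbed into $h_0$). For PFW one additionally has swap steps, where $\stepsize_t=\stepmax=\alpha_{\vv_t}^{(t)}$ but $|\Coreset^{(t+1)}|=|\Coreset^{(t)}|$; bounding their frequency is the delicate part — the argument is that between two consecutive additions of a new atom from $\lmo$, the sequence of active sets (as ordered subsets) cannot repeat, and there are at most $|\Vertices|!$ such configurations on a support of fixed size, yielding $k(t)\geq t/(3|\Vertices|!+1)$.

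\paragraph{Step 3: Assembling the rate.} From $h_{t+1}\leq(1-\rho)h_t$ on good steps and $h_{t+1}\leq h_t$ on all other steps (monotonicity of the objective — true for all four variants since line-search / re-optimization never increases $f$), induction gives $h_t\leq(1-\rho)^{k(t)}h_0\leq h_0\exp(-\rho\,k(t))$. For the $\mu=0$ case the same good-step argument instead yields $h_t-h_{t+1}\geq\frac{1}{2L}\innerProdCompressed{\r_t}{\hat\dd_t}^2\geq\frac{\delta^2}{8LM^2}$ when $h_t$ is bounded away from $0$ — more precisely one combines~\eqref{eq:lowerProgress} with the convexity bound $h_t\leq\innerProdCompressed{\r_t}{\err_t}\leq M\|\r_t\|$ to get $h_t-h_{t+1}\geq\frac{\delta^2}{8LM^2}h_t^2/h_t=\ldots$; the standard telescoping argument for such recursions then gives $h_t=O(1/k(t))$.

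\paragraph{Main obstacle.} The crux is Step 1's geometric estimate, and within it the clean lower bound $\innerProdCompressed{\hat\r_t}{\dd_t^\PFW}\geq\delta$ uniformly over all $\r_t$ and all reachable active sets $\Coreset^{(t)}$ — this is exactly what the pyramidal width $\PWidth(\Vertices)$ is engineered to encode, and verifying that the away/FW atom pair produced by the algorithm realizes (a lower bound on) the width-defining configuration requires matching the algorithm's greedy choices of $\vv_t$ and $\s_t$ to the quantifiers in~\eqref{eq:Pwidth}. The secondary obstacle is the combinatorial bound on PFW swap steps; the $|\Vertices|!$ bound is crude but sufficient, and sharpening it is explicitly left open.
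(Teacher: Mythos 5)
There is a genuine gap at the heart of Step~1, in how you invoke the pyramidal width. You first discard the denominator via Cauchy--Schwarz ($\innerProdCompressed{\r_t}{\hat{\err}_t} \leq \norm{\r_t}$) and reduce the problem to lower bounding the cosine $\innerProdCompressed{\hat{\r}_t}{\hat{\dd}_t}$, and then claim that Theorem~\ref{thm:muFdirWinterpretation} gives $\innerProdCompressed{\hat{\r}_t}{\dd_t^\PFW} \geq \PWidth(\Vertices) = \delta$. That is not what the theorem says, and the claim is false in general. The theorem bounds the \emph{ratio} $\innerProdCompressed{\r_t}{\dd_t^\PFW}/\innerProdCompressed{\r_t}{\hat{\err}_t} \geq \delta$; since $\innerProdCompressed{\r_t}{\hat{\err}_t} \leq \norm{\r_t}$, this only yields $\innerProdCompressed{\hat{\r}_t}{\dd_t^\PFW} \geq \delta \, \innerProdCompressed{\hat{\r}_t}{\hat{\err}_t}$, which can be arbitrarily smaller than $\delta$. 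A concrete counterexample: on the unit square with $\x^{(t)} = (0,0)$, $\Coreset^{(t)} = \{(0,0)\}$ and $\r_t = (-1,\epsilon)$, one gets $\s_t = (0,1)$, $\innerProdCompressed{\hat{\r}_t}{\dd_t^\PFW} \approx \epsilon \ll \delta = 1/\sqrt{2}$, while $\x^{(t)}$ is still suboptimal; the ratio is saved only because $\innerProdCompressed{\r_t}{\hat{\err}_t} \leq \epsilon$ as well. Your version of the estimate, $\innerProdCompressed{\hat{\r}_t}{\dd_t^\PFW} \geq \delta$, holds only when $\r_t$ is a \emph{feasible} direction from $\x^{(t)}$ (the quantifier over $\r$ in~\eqref{eq:Pwidth} ranges over $\textrm{cone}(\Kface - \x)$ only), i.e.\ essentially the interior case already covered by Gu\'elat--Marcotte; the boundary case with the negative gradient pointing outward is exactly where the zig-zagging occurs and where the whole difficulty of the theorem lies. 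The fix is short but changes the structure of the step: do \emph{not} apply Cauchy--Schwarz to the denominator; instead write $\innerProdCompressed{\r_t}{\hat{\dd}_t}/\innerProdCompressed{\r_t}{\hat{\err}_t} \geq \frac{1}{2\norm{\dd_t}}\,\innerProdCompressed{\r_t}{\dd_t^\PFW}/\innerProdCompressed{\r_t}{\hat{\err}_t} \geq \frac{\delta}{2M}$, applying Theorem~\ref{thm:muFdirWinterpretation} directly to the ratio. This is what the paper does (in affine-invariant clothing: it proves $h_t \leq g_t^2/(2\strongConvAFW)$ with $g_t = \innerProdCompressed{\r_t}{\s_t - \vv_t}$ and $\strongConvAFW \geq \mu\delta^2$, so the error direction never gets decoupled from the pairwise gap).

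Two smaller omissions, both handled in the paper's proof and worth being aware of: (i) the descent lemma~\eqref{eq:descentLemma} for an away or pairwise direction evaluates $f$ at points $\x^{(t)} + \stepsize \dd_t$ that may leave $\domain$ for $\stepsize$ up to $\stepsize_t^*$, which is why the theorem's footnote requires the Lipschitz gradient on $\domain + \domain - \domain$ (the paper encodes this via $\CfAFW$); and (ii) the boundary case $\stepsize_t^* > \stepmax$ with $\stepmax \geq 1$ (a FW step whose line-search saturates at $1$) is neither a drop step in general nor covered by your unconstrained-optimum argument --- the paper shows separately that it still contracts by a factor $(1 - \frac14)$. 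Your Steps~2 and~3 (drop-step counting via $|\Coreset^{(t)}| = |\Coreset^{(0)}| + A_t - D_t$, the factorial swap bound for PFW via non-repetition of weight configurations under strict descent, and the telescoping) match the paper's arguments in substance.
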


Note that to our knowledge, none of the existing linear convergence results showed that the duality gap was also linearly convergent. The result for the gap follows directly from the simple manipulation of~\eqref{eq:descentLemma}; putting the FW gap to the LHS
and optimizing the RHS for $\stepsize \in [0,1]$.

\begin{theorem}\label{thm:gapTheorem}
Suppose that $f$ has $L$-Lipschitz gradient over $\domain$ with $M :=
\diam(\domain)$. Then the FW gap $g_t^\FW$ for \emph{any} algorithm is upper
bounded by the primal error $h_t$ as follows:
\begin{equation}
g_t^\FW \leq h_t + LM^2/2 \,\textnormal{ when }\, h_t > LM^2/2, 
\quad\quad\quad\quad  g_t^\FW \leq M \sqrt{2 h_t L} \,\,\textnormal{
otherwise }.
\end{equation}
\end{theorem}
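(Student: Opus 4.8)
The plan is to specialize the smoothness inequality underlying the descent lemma~\eqref{eq:descentLemma} to the Frank-Wolfe direction $\dd_t^\FW := \s_t - \x^{(t)}$, and then use feasibility of the whole segment $[\x^{(t)},\s_t]$. Since $\s_t = \lmo(\nabla f(\x^{(t)})) \in \domain$ and $\x^{(t)} \in \domain$, convexity of $\domain$ gives $\x^{(t)} + \stepsize\,\dd_t^\FW \in \domain$ for every $\stepsize \in [0,1]$, and $\|\dd_t^\FW\| \leq M$ because both endpoints lie in $\domain$. By definition of the FW gap, $\langle \nabla f(\x^{(t)}), \dd_t^\FW\rangle = -g_t^\FW$, so $L$-smoothness yields
\[
f(\x^{(t)} + \stepsize\,\dd_t^\FW) \ \leq\ f(\x^{(t)}) - \stepsize\, g_t^\FW + \tfrac{\stepsize^2}{2}\, L M^2 \qquad \forall\,\stepsize\in[0,1].
\]

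Next I would invoke optimality of $\x^*$ over $\domain$: since $\x^{(t)} + \stepsize\,\dd_t^\FW$ is feasible, its value is at least $f(\x^*)$. Subtracting $f(\x^*)$ and writing $h_t = f(\x^{(t)}) - f(\x^*)$, the inequality rearranges into the one-parameter family
\[
\stepsize\, g_t^\FW - \tfrac{\stepsize^2}{2}\, L M^2 \ \leq\ h_t \qquad \forall\,\stepsize\in[0,1],
\]
i.e.\ $g_t^\FW \leq h_t/\stepsize + \tfrac{\stepsize}{2} L M^2$ for all $\stepsize \in (0,1]$. The last step is to minimize the right-hand side over $\stepsize$: the unconstrained minimizer is $\stepsize^\star = \sqrt{2 h_t / (L M^2)}$ with value $M\sqrt{2 h_t L}$. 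It is admissible ($\stepsize^\star \leq 1$) exactly when $h_t \leq L M^2/2$, giving the second bound $g_t^\FW \leq M\sqrt{2 h_t L}$; when $h_t > L M^2/2$ the right-hand side is strictly decreasing on $(0,1]$, so the optimal admissible choice is $\stepsize = 1$, giving the first bound $g_t^\FW \leq h_t + L M^2/2$.

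I do not expect a genuine obstacle here. The only two points to keep straight are (i) that the feasibility/optimality argument uses only $\x^{(t)} \in \domain$ — which is what makes the statement hold for \emph{any} algorithm, not just the FW variants considered in this paper — and (ii) the elementary case split on whether $\stepsize^\star$ lands inside $[0,1]$. Both are immediate, so the whole proof should be only a few lines.
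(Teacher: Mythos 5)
Your proof is correct and follows exactly the route the paper indicates: specialize the descent lemma~\eqref{eq:descentLemma} to the FW direction, move the gap to the left-hand side, bound $f(\x^{(t)}+\stepsize\dd_t^\FW)$ below by $f(\x^*)$ via feasibility, and optimize the resulting bound $g_t^\FW \leq h_t/\stepsize + \stepsize L M^2/2$ over $\stepsize \in (0,1]$ with the case split on whether the unconstrained minimizer lies in $[0,1]$. The paper only sketches this in one sentence; your write-up fills in the same steps with no deviation.
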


\section{Pyramidal Width} \label{sec:Pwidth}\vspace{-3mm}

We now describe the claimed lower bound on the angle between
the negative gradient and the pairwise FW direction, which depends only on
the geometric properties of~$\domain$. According to our argument about the
progress bound~\eqref{eq:linearProgressBound} and the PFW
gap~\eqref{eq:AFWgapInequality}, our goal is to find a lower bound on
$\innerProdCompressed{\r_t}{\dd_t^\PFW}/\innerProdCompressed{\r_t}{\hat{\err}
_t}$. First note that $\innerProdCompressed{\r_t}{\dd_t^\PFW}\! = \!\innerProdCompressed{\r_t}{\s_t\! -\! \vv_t} \! = \hspace{-5mm}\displaystyle
\max_{\s \in \domain, \vv \in \Coreset^{(t)}}\hspace{-3mm} \innerProdCompressed{\r_t}{\s -
\vv}$ where $\Coreset^{(t)}$ is a possible active set for~$\x^{(t)}$. This
looks like the \emph{directional width} of a pyramid with base~$\Coreset^{(t)}$ 
and summit $\s_t$. To be conservative, we consider the worst
case possible active set for~$\x^{(t)}$; this is what we will call the
\emph{pyramid directional width}~$\PdirW(\Vertices, \r_t, \x^{(t)})$. We 
start with the following definitions.

\vspace{-3mm}
\paragraph{Directional Width.}
The directional width of a set $\Vertices$ with respect to a direction $\r$
is defined as $\dirW(\Vertices,\r) := \max_{\s, \vv \in\Vertices}
\big\langle \frac{\r}{\norm{\r}}, \s - \vv \big\rangle$. The \emph{width} of~$\Vertices$
is the minimum directional width over all possible directions in its affine
hull.

\vspace{-3mm}
\paragraph{Pyramidal Directional Width.} We define the pyramidal directional
width of a set $\Vertices$ with respect to a direction $\r$ and a base point
$\x \in \domain$ to be\vspace{-0.5mm}
\begin{equation} \label{eq:TruePdirW}
\PdirW(\Vertices,\r, \x) := \min_{\S \in \S_{\x}} \dirW( \S \cup
\{\s(\Vertices,\r) \} , \; \r) = \min_{\S \in \S_{\x}} \max_{\s \in
\Vertices, \vv \in \S} \textstyle \big\langle \frac{\r}{\norm{\r}}, \s - \vv \big\rangle
,
\end{equation}
where $\S_{\x} := \{ \S \, | \, \S \subseteq \Vertices$ such that $\x$ is a
proper\footnote{By \emph{proper} convex combination, we mean that all
coefficients are non-zero in the convex combination.} convex combination of
all the elements in $\S\}$, and $\s(\Vertices,\r) := \argmax_{\vv \in
\Vertices} \innerProdCompressed{\r}{\vv}$ is the FW atom used as a summit. 

\vspace{-2mm}
\paragraph{Pyramidal Width.}
To define the pyramidal width of a set, we take the minimum over the cone of
possible \emph{feasible} directions~$\r$ (in order to avoid the problem of zero width).\\
A direction~$\r$ is \emph{feasible} for $\Vertices$ from $\x$ if it points inwards $\conv(\Vertices)$, 
(i.e. $\r \in \text{cone}(\Vertices-\x)$).\\
We define the \emph{pyramidal width} of a set $\Vertices$ to be the smallest pyramidal width of all its faces, i.e.\vspace{-0.5mm}
\begin{equation} \label{eq:Pwidth}
\PWidth(\Vertices) := \displaystyle \min_{\substack{\Kface \in \textrm{faces}(\conv(\Vertices)) \\
												  \x \in \Kface \\
												  \r \in \text{cone}(\Kface-\x) \setminus \{\0\}} 
                                   } \PdirW(\Kface \cap \Vertices,\r, \x)                   .    \vspace{-1mm}                             
\end{equation}

\begin{theorem} \label{thm:muFdirWinterpretation}
Let $\x \in \domain=\conv(\Vertices)$ be a suboptimal point and $\S$ be an
active set for $\x$. Let~$\x^*$ be an optimal point and corresponding error
direction $\hat{\err} = (\x^*-\x)/\norm{\x^*-\x}$, and negative gradient $\r
:= -\nabla f(\x)$  (and so $\innerProdCompressed{\r}{\hat{\err}} > 0$). Let
$\dd = \s \!- \!\vv$ be the pairwise FW direction obtained over $\Vertices$ and $\S$ with negative
gradient $\r$. Then \vspace{-2mm}
\begin{equation} \label{eq:PWidthIsBound}
\frac{\innerProdCompressed{\r}{\dd}}{\innerProdCompressed{\r}{\hat{\err}}}
\geq \PWidth(\Vertices) .\vspace{-3mm}
\end{equation}   
\end{theorem}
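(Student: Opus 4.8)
The plan is to unfold the definition of $\PWidth(\Vertices)$ from~\eqref{eq:Pwidth} and exhibit a face $\Kface$, a base point, and a feasible direction for which the pyramidal directional width is at most $\innerProdCompressed{\r}{\dd}/\innerProdCompressed{\r}{\hat{\err}}$. The natural candidate for the face is the minimal face $\Kface$ of $\domain$ containing the suboptimal point $\x$. I would take the base point of the pyramid to be $\x$ itself. The crucial observation is that $\S$, being an active set for $\x$, is a subset of $\Kface \cap \Vertices$ and expresses $\x$ as a proper convex combination of its elements, so $\S \in \S_\x$ in the sense of~\eqref{eq:TruePdirW} relative to $\Kface \cap \Vertices$; moreover the FW atom $\s = \s(\Vertices,\r)$ over the \emph{whole} set $\Vertices$ maximizes $\innerProdCompressed{\r}{\cdot}$, and (since $\x$ is interior to $\Kface$) so does the FW atom over $\Kface\cap\Vertices$ when $\r$ is feasible for $\Kface$ from $\x$. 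Thus $\dirW(\S\cup\{\s\},\r)$ bounds $\PdirW(\Kface\cap\Vertices,\r,\x)$ from above, and by the definition of $\PdirW$ as a minimum over admissible $\S$, we get $\PdirW(\Kface\cap\Vertices,\r,\x) \le \innerProdCompressed{\hat\r}{\s-\vv} = \innerProdCompressed{\r}{\dd}/\norm{\r}$.

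The second ingredient is to show that $\r=-\nabla f(\x)$ is in fact a feasible direction for $\Kface$ from $\x$, i.e. $\r \in \text{cone}(\Kface - \x)\setminus\{\0\}$, so that this triple is admissible in the minimization defining $\PWidth(\Vertices)$. Here I would use that $\x^* \in \domain$ and $\innerProdCompressed{\r}{\hat{\err}} = \innerProdCompressed{\r}{\x^*-\x}/\norm{\x^*-\x} > 0$ (given). Actually $\x^*$ need not lie in $\Kface$, but the projection of $\x^*$ onto the affine hull of $\Kface$, or more simply any point of $\Kface$ in the direction of decreasing $f$, can be used; alternatively, since $\x$ is not optimal, there is some atom $\vv'\in\Vertices$ with $\innerProdCompressed{\r}{\vv'-\x}>0$, and projecting onto $\Kface$ keeps this positive because $\x\in\relint\Kface$. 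Once feasibility of $\r$ is established, $\PWidth(\Vertices) \le \PdirW(\Kface\cap\Vertices,\r,\x)$ by definition of the outer minimum.

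Finally I would combine: $\norm{\r}\,\PWidth(\Vertices) \le \norm{\r}\,\PdirW(\Kface\cap\Vertices,\r,\x) \le \innerProdCompressed{\r}{\s-\vv} = \innerProdCompressed{\r}{\dd}$, and dividing through by $\innerProdCompressed{\r}{\hat{\err}} = \innerProdCompressed{\hat\r}{\hat{\err}}\norm{\r}$ gives
\[
\frac{\innerProdCompressed{\r}{\dd}}{\innerProdCompressed{\r}{\hat{\err}}} \ge \frac{\norm{\r}\,\PWidth(\Vertices)}{\innerProdCompressed{\r}{\hat{\err}}} = \frac{\PWidth(\Vertices)}{\innerProdCompressed{\hat\r}{\hat{\err}}} \ge \PWidth(\Vertices),
\]
using $\innerProdCompressed{\hat\r}{\hat{\err}} \le 1$ by Cauchy--Schwarz. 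The main obstacle I anticipate is the bookkeeping around the face $\Kface$: verifying carefully that restricting to $\Kface\cap\Vertices$ does not change the relevant FW atom (the summit) and that $\r$ stays feasible and nonzero after projecting onto the affine hull of $\Kface$ — these are the places where $\x\in\relint\Kface$ is used essentially, and where a sloppy argument would break. Everything else is definition-chasing and one application of Cauchy--Schwarz.
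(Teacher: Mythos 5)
Your argument is complete only in the easy case where $\r=-\nabla f(\x)$ is a feasible direction for the \emph{whole} polytope from $\x$ (there one should simply take $\Kface=\domain$, which is a face of itself, and Cauchy--Schwarz finishes the job exactly as you say; this is the paper's warm-up). The genuine gap is the boundary case, which is the entire point of the theorem: when $\x$ lies on the boundary and $\r$ points out of $\domain$, your choice of $\Kface$ as the minimal face containing $\x$ does not make the triple $(\Kface,\x,\r)$ admissible in the minimum defining $\PWidth(\Vertices)$, because $\r$ need not lie in $\text{cone}(\Kface-\x)$ --- it need not even lie in $\text{span}(\Kface-\x)$. A concrete failure: on the unit square with $\x=(1/2,0)$ and $\r=(\epsilon,-1)$, the minimal face is the bottom edge, $\r\notin\text{span}(\Kface-\x)$, and $\PdirW(\Kface\cap\Vertices,\r,\x)\approx\epsilon$ is far below $\PWidth(\Vertices)=1/\sqrt{2}$, so your chain $\PWidth(\Vertices)\le\PdirW(\Kface\cap\Vertices,\r,\x)$ is false as written.

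Your proposed repair --- project $\r$ onto the affine hull of $\Kface$ to obtain a feasible $\r'$ --- is the right instinct but does not close the argument, because after the projection the numerator is controlled by $\norm{\r'}\cdot\PdirW(\Kface\cap\Vertices,\r',\x)$ while your denominator bound is still the Cauchy--Schwarz estimate $\innerProdCompressed{\r}{\hat{\err}}\le\norm{\r}$. Since $\norm{\r'}/\norm{\r}$ can be arbitrarily small (it is $\approx\epsilon$ in the example above), the resulting lower bound degenerates. The missing ingredient is precisely the paper's Lemma~\ref{lem:minAngle}: when $\r$ is not feasible, the \emph{worst-case error direction} $\hat{\err}$ maximizing $\innerProdCompressed{\r}{\hat{\err}}$ over $\domain$ is itself attained on a facet $\Kface'$ containing $\x$, so that $\innerProdCompressed{\r}{\hat{\err}}=\innerProdCompressed{\r'}{\hat{\err}}\le\norm{\r'}$ --- the denominator shrinks in lockstep with the numerator. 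This is proved via a KKT argument on the sphere (and is where the Euclidean norm is used essentially). Moreover the lemma only delivers a \emph{facet} (codimension one), so the paper must iterate the projection facet by facet until feasibility is reached, and must separately verify that the final projected direction is nonzero (using $\innerProdCompressed{\r}{\hat{\err}}>0$ strung through the chain of equalities); neither of these steps is present in your proposal. Your treatment of the numerator (the inequality corresponding to~\eqref{eq:d1sf}, using that $\S\subseteq\Kface'$ because $\x$ is a proper convex combination of $\S$) is essentially correct, but without the angle lemma the denominator cannot be controlled and the proof does not go through.
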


\subsection{Properties of Pyramidal Width and Consequences}\vspace{-2mm}

\paragraph{Examples of Values.} The pyramidal width of a set $\Vertices$ is
lower bounded by the minimal width over all subsets of atoms, and thus is
strictly greater than zero if the number of atoms is finite.
On the other hand, this lower bound is often too loose to be useful, as in
particular, vertex subsets of the unit cube in dimension $d$ can have exponentially small width $O(d^{-\frac{d}{2}})$~\citep[see
Corollary 27 in][]{Ziegler:1999:01polytope}.
On the other hand, as we show here, the pyramidal width of the unit cube is actually
$1/\sqrt{d}$, justifying why we kept the tighter but more involved
definition~\eqref{eq:Pwidth}. See Appendix~\ref{app:cubeWidth} for the proof.

\begin{lemma}\label{lem:cubeWidth}
The pyramidal width of the unit cube in $\R^d$ is $1/\sqrt{d}$.\vspace{-2mm}
\end{lemma}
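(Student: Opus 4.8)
The plan is to prove $\PWidth(\Vertices)\ge 1/\sqrt d$ uniformly over all faces, and then to exhibit a single configuration attaining $1/\sqrt d$; together these pin down the value. Let $\Vertices=\{0,1\}^d$. For the lower bound, fix a face $\Kface$ of the cube, a point $\x\in\Kface$, and a feasible direction $\0\ne\r\in\text{cone}(\Kface-\x)$. Every face is a sub-cube --- a subset $F$ of the coordinates is free and the rest are frozen to $0$ or $1$ --- and since $\r$ is supported on $F$, both $\dirW$ and $\PdirW$ effectively live in $\R^F$. I would first record the sign constraints that $\r\in\text{cone}(\Kface-\x)$ imposes: writing $F'\subseteq F$ for the set of fractionally-valued coordinates of $\x$, one has $r_i\ge 0$ whenever $i\in F\setminus F'$ and $x_i=0$, and $r_i\le 0$ whenever $i\in F\setminus F'$ and $x_i=1$. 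By the reflection symmetry of the cube, which fixes $\PdirW$, I may assume $r_i\ge 0$ for all $i\in F$; then the Frank--Wolfe atom $\s^\star:=\s(\Kface\cap\Vertices,\r)$ may be taken to be the vertex of $\Kface$ equal to $1$ on every free coordinate, so that $\innerProdCompressed{\r}{\s^\star-\vv}=\sum_{i\in F:\,v_i=0}r_i$ for every $\vv\in\Kface\cap\Vertices$.

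The combinatorial heart is then the observation that, because $\x$ is a \emph{proper} convex combination of any active set $\S\in\S_\x$, every vertex of $\S$ lies in the minimal face containing $\x$; in particular $v_i=x_i$ for all $\vv\in\S$ and $i\in F\setminus F'$, while for each fractional coordinate $i\in F'$ at least one vertex of $\S$ has $i$-th coordinate $0$. Choosing $i^\star\in\argmax_{i\in F}r_i$ (so $r_{i^\star}=\norm{\r}_\infty>0$) and combining this with the sign constraints above shows that some $\vv\in\S$ has $v_{i^\star}=0$, whence $\max_{\vv\in\S}\innerProdCompressed{\r}{\s^\star-\vv}\ge r_{i^\star}=\norm{\r}_\infty$. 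Dividing by $\norm{\r}$ and using $\norm{\r}_\infty/\norm{\r}\ge 1/\sqrt{|F|}\ge 1/\sqrt d$ gives $\dirW(\S\cup\{\s^\star\},\r)\ge 1/\sqrt d$ for every $\S\in\S_\x$, so taking the minimum over $\S$ and then over $(\Kface,\x,\r)$ yields $\PWidth(\Vertices)\ge 1/\sqrt d$.

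For the matching upper bound I would use a single explicit configuration: the whole cube as the face, $\r=\one$ (so $\hat{\r}=\one/\sqrt d$), and $\x=(1-\epsilon)\one$ for any $0<\epsilon<1/d$. Since $\x$ is interior, $\r$ is feasible, and $\s^\star=\one$. The set $\S=\{\one\}\cup\{\one-\unit_i:1\le i\le d\}$ is a valid proper active set for $\x$, with weight $1-d\epsilon$ on $\one$ and weight $\epsilon$ on each $\one-\unit_i$; a direct computation gives $\dirW(\S\cup\{\s^\star\},\r)=\innerProdCompressed{\hat{\r}}{\one}-\min_{\vv\in\S}\innerProdCompressed{\hat{\r}}{\vv}=\sqrt d-\tfrac{d-1}{\sqrt d}=1/\sqrt d$. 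Hence $\PdirW(\Vertices,\one,\x)\le 1/\sqrt d$, so $\PWidth(\Vertices)\le 1/\sqrt d$, and combining the two bounds proves the lemma.

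The routine ingredients --- the $\ell_\infty$-versus-$\ell_2$ inequality and the explicit weights --- are immediate; I expect the main obstacle to be the bookkeeping in the first two paragraphs: correctly identifying, for an arbitrary face and an arbitrary (possibly relatively-boundary) base point $\x$, which coordinates $\r$ is supported on and which of its signs are constrained, so that the reflection reduction, the choice of $\s^\star$ as the all-ones vertex, and the ``proper combination forces a coordinate-$0$ vertex'' step all apply uniformly. Ties in the $\argmax$ defining $\s^\star$ (coordinates with $r_i=0$) are a minor wrinkle that gets absorbed into this bookkeeping.
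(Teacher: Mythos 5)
Your proposal is correct and follows essentially the same route as the paper's proof: normalize signs by a reflection symmetry so the Frank--Wolfe atom is a fixed corner of the (sub)cube, use the properness of the active set together with the feasibility sign constraints to force some $\vv\in\S$ with $v_{i^\star}=0$ at a coordinate achieving $\norm{\r}_\infty$, and finish with $\norm{\r}_\infty \geq \norm{\r}/\sqrt{d}$, matched by an explicit corner-plus-neighbors configuration attaining $1/\sqrt{d}$. The only cosmetic differences are the sign convention (the paper places the FW atom at the origin so $r_i\le 0$) and that you treat all faces uniformly where the paper handles the interior, a facet, and lower-dimensional faces in sequence.
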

For the probability simplex with $d$ vertices, the pyramidal width is
actually the same as its width, which is $2/\sqrt{d}$ when $d$ is
even, and $2/\sqrt{d\!-\!1/d}$ when $d$ is odd~\citep{Alexander:1977:simplex} (see Appendix~\ref{app:cubeWidth}). 
In contrast, the pyramidal width of an infinite set can be zero. For
example, for a curved domain, the set of active atoms $\S$ can contain
vertices %
forming a very narrow pyramid, yielding a zero width in the limit. \vspace{-2mm}

\paragraph{Condition Number of a Set.} The inverse of the rate constant
$\rho$ appearing in Theorem~\ref{thm:megaConvergenceTheorem} is the product
of two terms: $L/\mu$ is the standard \emph{condition number} of the objective function appearing in
the rates of gradient methods in convex optimization. The second quantity
$(M/\delta)^2$ (diameter over pyramidal width) can be interpreted as
a \emph{condition number} of the domain $\domain$, or its
\emph{eccentricity}. %
The more eccentric the constraint set (large diameter
compared to its pyramidal width), the slower the convergence. 
The best condition number of a function is when its level sets are spherical; the
analog in term of  the constraint sets is actually the regular simplex, which has the
maximum width-to-diameter ratio amongst all simplices~\citep[see Corollary 1
in][]{Alexander:1977:simplex}. Its eccentricity is (at most) $d/2$. In contrast, the
eccentricity of the unit cube is  $d^2$, which is much worse.

We conjecture that the pyramidal width of a set of \emph{vertices} (i.e. extrema of their convex hull) is \emph{non-increasing} when another
vertex is added (assuming that all previous points remain vertices). 
For example, the unit cube can be obtained by iteratively adding vertices to the
regular probability simplex, and the pyramidal width thereby decreases from
$2/\sqrt{d}$ to $1/\sqrt{d}$.
This property could provide lower bounds for the pyramidal width of more complicated polytopes, such 
as $1/\sqrt{d}$ for the $d$-dimensional marginal polytope, 
as it can be obtained by removing vertices from the unit cube. 
\vspace{-3mm}
\paragraph{Complexity Lower Bounds.}  %
Combining the convergence Theorem~\ref{thm:megaConvergenceTheorem}
and the condition number of the unit simplex, we get a complexity of $O(d
\frac{L}{\mu} \log(\frac{1}{\epsilon}))$ to reach $\epsilon$-accuracy 
when optimizing a strongly convex function over the unit simplex. Here the
linear dependence on $d$ should not come as a surprise, 
in view of the known lower bound of $1/t$ for $t \leq d$ for Frank-Wolfe type
methods~\citep{Jaggi:2013wg}.

\vspace{-3mm}
\paragraph{Applications to Submodular Minimization.}
See~Appendix~\ref{app:submodular} for a consequence of our linear rate for the popular MNP algorithm for submodular function optimization (over the base polytope).

\vspace{-4mm}
\section{Non-Strongly Convex Generalization} \label{sec:nonStronglyConvex}
\vspace{-2mm}
Building on the work of~\citet{Beck:2015vo}
and~\citet{Wang:2014:nonStronglyConvex}, we can generalize our global linear
convergence results for all Frank-Wolfe variants for the more general case
where $f(\x) := g(\A \x) + \innerProd{\bv}{\x}$, for $\A \in \R^{\ydim \times
\xdim}$, $\bv \in \R^{\xdim}$ and where $g$ is $\mu_g$-strongly convex and
continuously differentiable over $\A \domain$. We note that for a general
matrix $\A$, $f$ is convex but not necessarily \emph{strongly} convex. In
this case, the linear convergence still holds but with the constant $\mu$
appearing in the rate of Theorem~\ref{thm:megaConvergenceTheorem} replaced
with the generalized constant $\tilde{\mu}$ appearing in
Lemma~\ref{lem:generalizedStrongConvexity} in
Appendix~\ref{app:NonStronglyConvex}.

\begin{wrapfigure}{r}{4.8cm}\vspace{-1.5em}
  \vspace{-8mm}
  \begin{center}
  \includegraphics[width=0.95\linewidth]{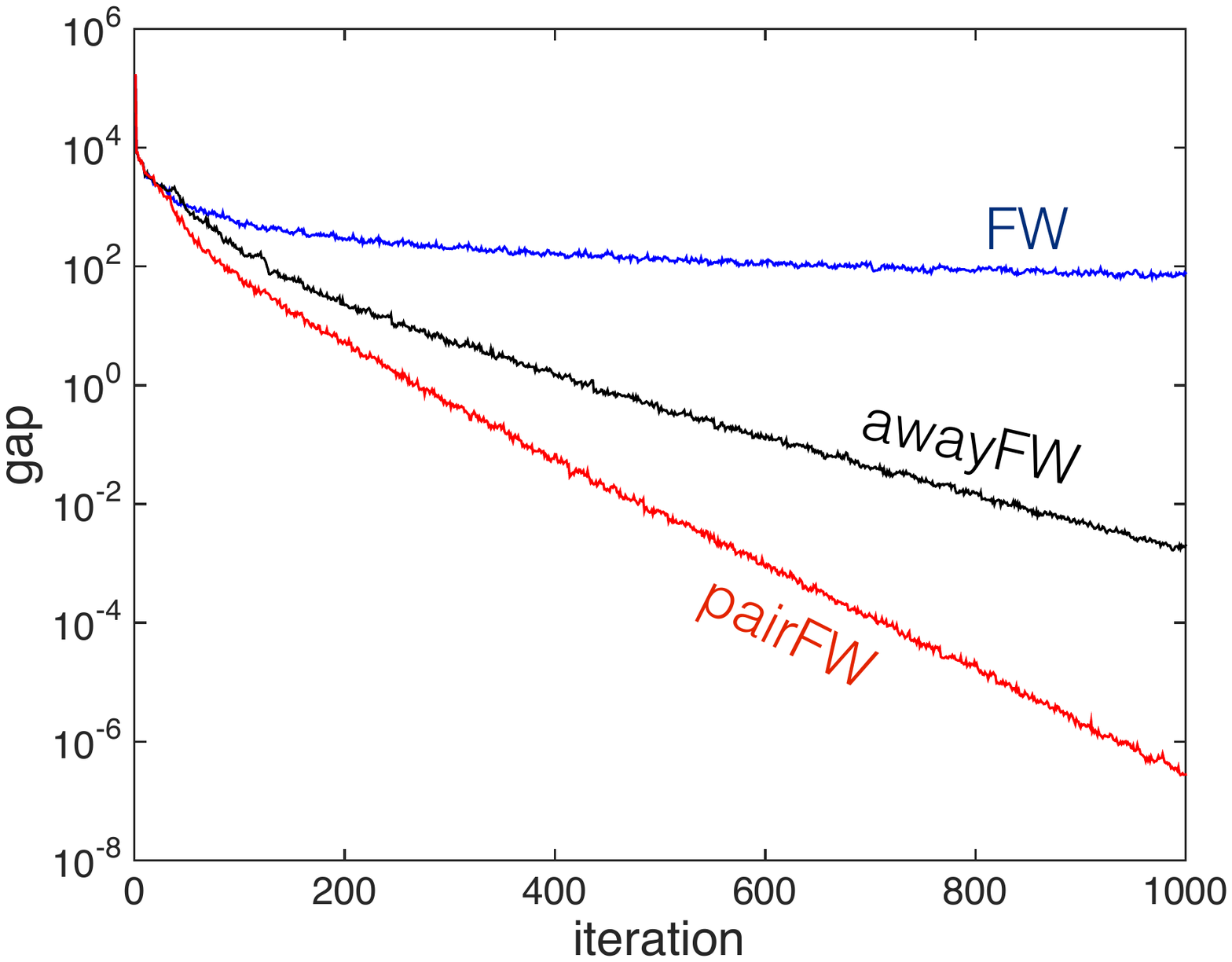}
  \\
  \ \includegraphics[width=0.95\linewidth]{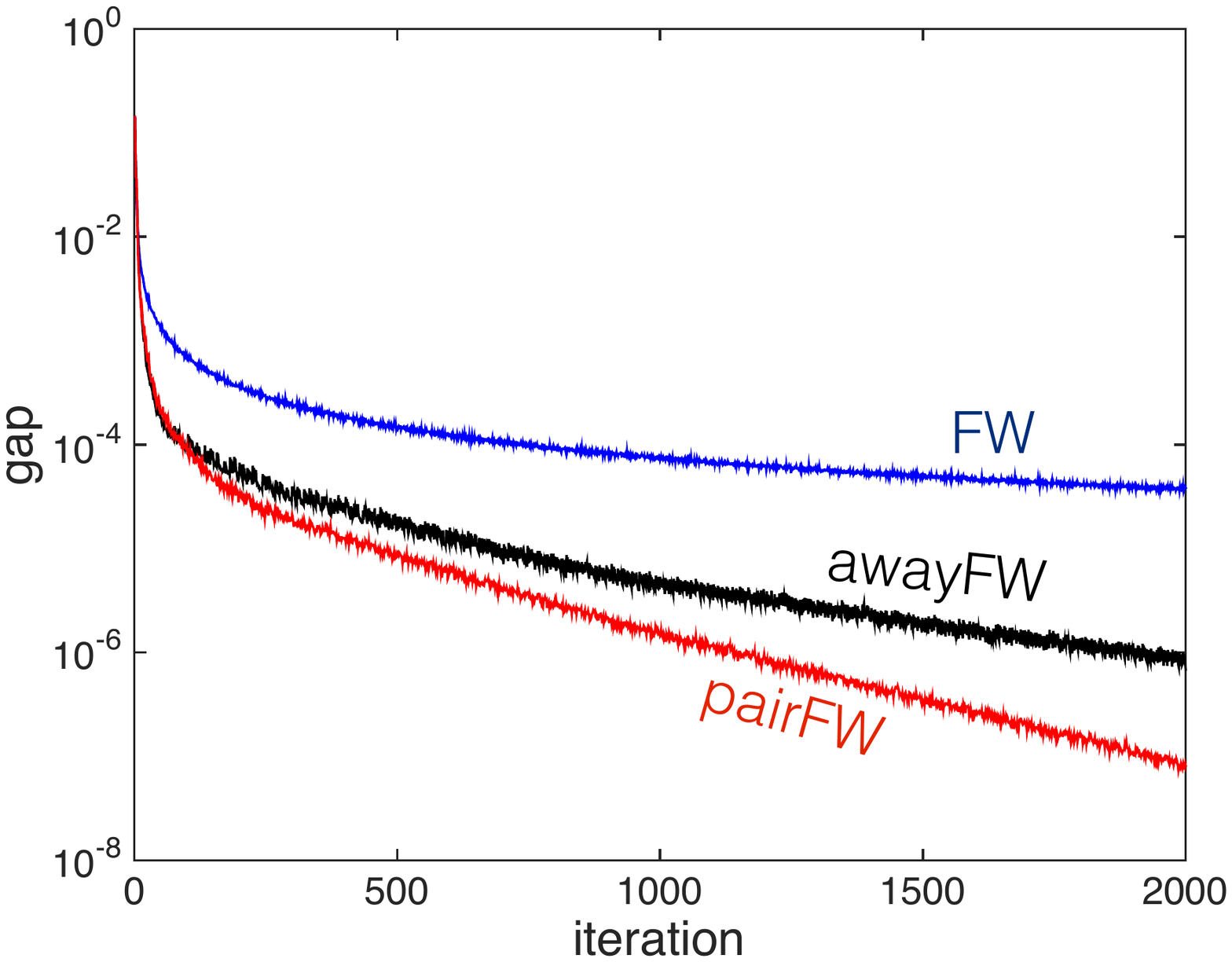}
  \vspace{-1em}
  \caption{\small Duality gap $g_t^\FW$ vs iterations on the Lasso problem (top), and video co-localization (bottom). 
  Code is available from the authors' website.
  }
  \label{fig:experiments}
  \end{center}  \vspace{-1em}
\end{wrapfigure}

\vspace{-4mm}
\section{Illustrative Experiments}
\vspace{-2mm}

We illustrate the performance of the presented algorithm variants in two
numerical experiments, shown in Figure~\ref{fig:experiments}.
The first example is a constrained Lasso problem ($\ell_1$-regularized least
squares regression), that is $\min_{\x \in \domain} \, f(\x) =
\norm{\A\x-\bv}^2$, with $\domain=20\cdot L_1$ a scaled $L_1$-ball. We used a
random Gaussian matrix $\A\in \R^{200 \times 500}$, and a noisy measurement
$\bv=\A\x^*$ with $\x^*$ being a sparse vector with 50 entries $\pm1$, and
$10\%$ of additive noise. For the $L_1$-ball, the linear minimization oracle
$\lmo$ just selects the column of $\A$ of best inner product with the residual vector.
The second application comes from video co-localization. The approach used
by~\citep{Joulin:2014uw} is formulated as a quadratic program (QP) over a
flow polytope, the convex hull of paths in a network. In this application,
the linear minimization oracle is equivalent to finding a shortest path in
the network, which can be done easily by dynamic
programming. %
For the $\lmo$, we re-use the code provided
by~\citep{Joulin:2014uw} %
and their included \textsf{\small aeroplane} dataset resulting in a QP over
660 variables.
In both experiments, we see that the modified FW variants (away-steps and
pairwise) outperform the original FW algorithm, and exhibit a linear convergence.
In addition, the constant in the convergence rate of Theorem~\ref{thm:megaConvergenceTheorem}
can also be empirically shown to be fairly tight for AFW and PFW by running them
on an increasingly obtuse triangle (see Appendix~\ref{app:triangle}).

\vspace{-2mm}
\paragraph{Discussion.}
Building on a preliminary version of our work~\citep{lacoste2013affine},
\citet{Beck:2015vo} also proved a linear rate for away-steps FW, but
with a simpler lower bound for the LHS of~\eqref{eq:PWidthIsBound} using
linear duality arguments. However, their lower bound
\citep[see e.g. Lemma 3.1 in][]{Beck:2015vo} is looser: they
get a $d^2$ constant for the eccentricity of the regular simplex instead
of the tighter $d$ that we proved. 
Finally, the recently proposed 
generic scheme for \emph{accelerating} first-order optimization 
methods in the sense of Nesterov from~\cite{lin2015catalyst}
applies directly to the FW variants given their global 
linear convergence rate that we proved.
This gives for the first time
first-order methods that \emph{only use linear oracles} 
and obtain the ``near-optimal'' $\tilde{O}(1/k^2)$ rate for smooth convex
functions, or the accelerated $\tilde{O}(\sqrt{L/\mu})$ constant in the
linear rate for strongly convex functions. Given that
the constants also depend on the dimensionality, 
it remains an open question whether this acceleration is 
practically useful.

\vspace{-3mm}
{\small \paragraph{Acknowledgements.}
We thank J.B. Alayrac, E. Hazan, A. Hubard, A. Osokin and P. Marcotte
for helpful discussions. This work was partially supported by the MSR-Inria Joint Center
and a Google Research Award.}
\bibliographystyle{abbrvnat} 
{\small
\bibliography{references}
}%

\newpage
\appendix

\setlength{\floatsep}{\Sfloatsep}
\setlength{\textfloatsep}{\Stextfloatsep}
\setlength{\intextsep}{\Sintextsep}

\rule{\textwidth}{1pt}
~\\
{\huge Appendix}

\paragraph{Outline.}
The appendix is organized as follows: In Appendix \ref{sec:FWvariants}, we discuss some of the Frank-Wolfe algorithm variants in more details and related work (including the MNP algorithm in Appendix~\ref{sec:MNPdetails} and its application to submodular minimization in Appendix \ref{app:submodular}).

In Appendix \ref{sec:pyramidal}, we discuss the pyramidal width for some particular cases of sets (such as the probability simplex and the unit cube), and then provide the proof of the main Theorem~\ref{thm:muFdirWinterpretation} relating the pyramidal width to the progress quantity essential for the linear convergence rate.
Section~\ref{sec:invariance} presents an affine invariant version of the complexity constants.

In the following Section~\ref{sec:conv}, we show the main linear convergence result for the four variants of the FW algorithm, and also discuss the sublinear rates for general convex functions.
Section~\ref{app:triangle} presents a simple experiment demonstrating the empirical tightness of the theoretical linear convergence rate constant.
Finally, in Appendix~\ref{app:NonStronglyConvex} we discuss the generalization of the linear convergence to some cases of non-strongly convex functions in more details.

\section{More on Frank-Wolfe Algorithm Variants}\label{sec:FWvariants}

\subsection{Wolfe's Min-Norm Point (MNP) algorithm}\label{sec:MNPdetails}

A generalization of Wolfe's min-norm point (MNP) algorithm~\citep{Wolfe:1976:MNP} for general convex functions is to 
run Algorithm~\ref{alg:FCFW} with the correction subroutine in step~7
implemented as presented below in Algorithm~\ref{alg:MNP}.
In Wolfe's paper~\citep{Wolfe:1976:MNP}, the correction step is called the minor cycle; whereas the FW outer loop is called the major cycle.

As we have mentioned in Section \ref{sec:variants}, MNP for polytope distance is often confused with fully-corrective FW as presented in Algorithm~\ref{alg:FCFW}, for quadratic
objectives. In fact, standard FCFW optimizes $f$ over
$\conv(\Vertices^{(t)})$, whereas MNP implements the correction as a
sequence of \emph{affine} projections on the active set that potentially yield a different update.

\begin{algorithm}
	\caption{Generalized version of Wolfe's MNP correction: \textbf{MNP-Correction}$(\x^{(t)}, \Vertices^{(t)}, \s_t)$} %
	\label{alg:MNP}
	\begin{algorithmic}[1]
	\STATE Let $\S^{(0)} := \Vertices^{(t)} \bigcup \{\s_t\}$, and $\z_0 := \x^{(t)}$. Note that $\x^{(t)} = \sum_{\vv \in \Vertices^{(t)}} \alpha_{\vv} \, \vv$ and we assume that the elements 
	of $\Vertices^{(t)}$ are \emph{affinely independent}.
	\FOR{$k=1\dots |\S^{(0)}|$}
		\STATE Let $\y_k$ be the minimizer of $f$ on the affine hull of $\S^{(k-1)}$
		\IF{$\y_k$ is in the relative interior of $\conv(\S^{(k-1)})$}
			\STATE \textbf{return} $(\y_k, \S^{(k-1)}) \quad\quad$ \emph{($\S^{(k-1)}$ is active set for $\y_k$)} 
		\ELSE
			\STATE Let $\z_k$ be the solution of doing line-search from $\z_{k-1}$ to $\y_k$. {\citepsup[step 2(d)(iii) of Alg.~1 in][]{Chakrabarty:2014:MNP}}
			\STATE \emph{\small (Note that $\z_k$ now lies on the boundary of $\conv(\S^{(k-1)})$, and so some atoms were removed)} 
			\STATE Let $\S^{(k)}$ be the (affinely independent) active atoms in the expansion of $\z_k$.
		 \ENDIF	
	\ENDFOR
	\end{algorithmic}
\end{algorithm}

There are two main differences between FCFW and the
MNP algorithm. First, after a correction step, MNP guarantees that $\x^{(t+1)}$ is \emph{both} the minimizer of
$f$ over the \emph{affine hull} of $\Vertices^{(t+1)}$ and also $\conv(\Vertices^{(t+1)})$ (where $\Vertices^{(t+1)}$ might
be much smaller than $\Vertices^{(t)} \cup
\{\s_t\}$), whereas FCFW guarantees that
$\x^{(t+1)}$ is the minimizer of $f$ over $\conv(\Vertices^{(t)} \cup
\{\s_t\})$ -- this is usually not the case for MNP unless at most one atom
was dropped from the correction polytope, as is apparent from our convergence proof. 
Secondly, the correction atoms $\Vertices^{(t)}$ are always
affinely independent for MNP and are identical to the active set
$\Coreset^{(t)}$, whereas FCFW can use both redundant as well as inactive atoms. 
The advantage of the MNP
implementation using affine hull projections is that the correction can be
efficiently implemented when $f$ is the Euclidean norm, especially when a
triangular array representation of the active set is maintained (see the
careful implementation details in Wolfe's original paper~\citep{Wolfe:1976:MNP}).

The MNP variant indeed only makes sense when the minimization of $f$ over the affine hull of $\domain$ is well-defined (and is efficient). Note though that the line-search in step 7 does not require any new information about $\domain$, as it is made only with respect to $\conv(\S^{(k-1)})$, for which we have an explicit list of vertices. This line-search can be efficiently computed in $O(|\S^{(k-1)}|)$, and is well described for example in step 2(d)(iii) of Algorithm 1 of \citetsup{Chakrabarty:2014:MNP}.

\subsection{Applications to Submodular Minimization} \label{app:submodular}
An interesting consequence of our global linear convergence result for FW algorithm variants here is the potential to reduce the gap between the known
theoretical rates and the impressive empirical performance of MNP for submodular
function minimization (over the base polytope). 
While \citet{Bach:2013et} already showed convergence of FW in this case, \citetsup{Chakrabarty:2014:MNP} later gave a weaker convergence rate for Wolfe's MNP variant.
For exact submodular function optimization, the overall complexity by \citepsup{Chakrabarty:2014:MNP} was $O(d^5 F^2)$ 
(with some corrections\footnote{\citetsup{Chakrabarty:2014:MNP} quoted a complexity of $O(d^7 F^2)$ for MNP.
However, this fell short of the earlier result of \citet{Bach:2013et} for
classic FW in the submodular minimization case, which was better by two~$O(d)$ factors. \citepsup{Chakrabarty:2014:MNP} counted $O(d^3)$ per iteration
of the MNP algorithm whereas Wolfe had provided a $O(d^2)$ implementation;
and they missed that there were at least $t/2$ good cycles (`non drop steps')
after $t$ iterations, rather than $O(t/d)$ as they have used.
}), where
$F$ is the maximum absolute value of the integer-valued submodular
function.
This is in contrast to $O(d^5 \log(d \, F))$ for the fastest
algorithms~\citepsup{Iwata:2002:subraey}. 
Using our linear convergence, the $F$ factor can be put back in the $\log$ term for 
MNP,\footnote{This is assuming that the eccentricity of the base polytope
does not depend on~$F$, which remains to be proven.}
matching their empirical observations that the MNP algorithm was not too 
sensitive to $F$. The same follows for AFW and FCFW, which is novel.

\subsection{Pairwise Frank-Wolfe}\label{sec:PFWdetails}
Our new analysis of the pairwise Frank-Wolfe variant as introduced in Section
\ref{sec:variants} is motivated by the work of~\citet{Garber:2013vl}, who
provided the first variant of Frank-Wolfe with a global linear convergence
rate with explicit constants that do not depend on the location of the
optimum $\x^*$, for a more complex extension of such a pairwise algorithm.
An important contribution of the work of~\citet{Garber:2013vl} was to define
the concept of \emph{local linear oracle}, which (approximately) minimizes a
linear function on the intersection of $\domain$ and a small ball around
$\x^{(t)}$ (hence the name \emph{local}). They showed that if such a local
linear oracle was available, then one could replace the step that moves
towards $\s_t$ in the standard FW procedure with a constant step-size move
towards the point returned by the local linear oracle to obtain a globally
linearly convergent algorithm. They then demonstrated how to implement such a
local linear oracle by using only one call to the linear oracle (to get
$\s_t$), as well as sorting the atoms in $\Coreset^{(t)}$ in decreasing order
of their inner product with $\nabla f(\x^{(t)})$ (note that the first element
then is the away atom $\vv_t$ from Algorithm~\ref{alg:AFW}). The procedure
implementing the local linear oracle amounts to iteratively swapping the mass
from the away atom~$\vv_t$ to the FW atom~$\s_t$ until enough mass has been
moved (given by some precomputed constants). If the amount of mass to move is
bigger than $\alpha_{\vv_t}^{(t)}$, then one sets $\alpha_{\vv_t}^{(t)}$  to zero and
start moving mass from the \emph{second} away atom, and so on, until enough
mass has been moved (which is why the sorting is needed). We call such a swap
of mass between the away atom and the FW atom a \emph{pairwise FW} step, i.e.
$\alpha_{\vv_t}^{(t+1)} = \alpha_{\vv_t}^{(t)} - \stepsize$ and
$\alpha_{\s_t}^{(t+1)} = \alpha_{\s_t}^{(t)} + \stepsize$ for some step-size
$\stepsize \leq \stepmax := \alpha_{\vv_t}^{(t)}$. The local linear oracle is
implemented as a sequence of pairwise FW steps, always keeping the same FW
atom~$\s_t$ as the target, but updating the away atom to move from as we set their
coordinates to zero.

A major disadvantage of the algorithm presented by~\citet{Garber:2013vl} is
that their algorithm is \emph{not adaptive}: it requires the computation of
several (loose) constants to determine the step-sizes, which means that the
behavior of the algorithm is stuck in its worst-case analysis. 
The pairwise Frank-Wolfe variant is obtained by simply doing one line-search
in the pairwise Frank-Wolfe direction $\dd^\PFW_t := \s_t - \vv_t$ (see
Algorithm~\ref{alg:PFW}). This gives a fully adaptive algorithm, and it turns
out that this is sufficient to yield a global linear convergent rate.

\paragraph{Notes on Convergence Proofs in~\cite{Nanculef:2014bj}.}
We here point out some corrections to the convergence proofs given in~\cite{Nanculef:2014bj}
for a variant of pairwise FW that chooses between a standard FW step
and a pairwise FW step by picking the one which makes the most progress
on the objective after a line-search. \cite[Proposition~$1$]{Nanculef:2014bj}
states the global convergence of their algorithm by arguing that
$\innerProdCompressed{-\nabla f(\x^{(t)})}{\dd_t^\PFW}  \geq \innerProdCompressed{-\nabla f(\x^{(t)})}{\dd_t^\FW}$ and then stating that they can re-use the same pattern 
as the standard FW
convergence proof but with the direction $\dd_t^\PFW$. But this
is forgetting the fact that the maximal step-size $\stepmax = \alpha_{\vv_t}$ for
a pairwise FW step can be too small to make sufficient progress.
Their global convergence statement is still correct as every step
of their algorithm
makes more progress than a FW step, 
which already has a global convergence result, 
but this is not the argument they made. Similarly, 
they state a global linear convergence result in their Proposition~4,
citing a proof from~\citepsup{allende2013PFW}.
On the other hand, the relevant used Proposition~3 in~\citepsup{allende2013PFW}
forgot to consider the possibility of problematic \emph{swap steps} that 
we had to painfully bound in our convergence 
Theorem~\ref{thm:megaConvergenceTheorem2}; they only
considered drop steps or `good steps', thereby missing a bound on the number of swap steps to
get a valid global bound.

\subsection{Other Related Work} \label{app:PenaDiscussion}
Very recently, following the earlier workshop version of our article~\citep{lacoste2013affine}, \citet{Pena:2015ta} presented an alternative geometric quantity measuring the linear convergence speed of the AFW algorithm variant. Their approach is motivated by a special case of the Frank-Wolfe method, the von Neumann algorithm.
Their complexity constant -- called the restricted width -- is also bounded away from zero, but its value does depend on the location of the optimal solution, which is a disadvantage shared with the earlier existing results of~\citep{Wolfe:1970wy,Guelat:1986fq,Beck:2004jm}, as well as the line of work 
of~ \citep{Ahipasaoglu:2008il,Kumar:2010ku,Nanculef:2014bj} that relies on
Robinson's condition \citep{Robinson:1982ii}.
More precisely, the bound on the constant given in~\citep[Theorem 4]{Pena:2015ta}  applies to the translated atoms $\tilde A$ relative to the optimum point. The  constant is not affine-invariant, whereas the constants $\strongConvAFW$~\eqref{eq:muf} and $\CfAFW$~\eqref{eq:CfAFW} in our setting are so, see the discussion in Section~\ref{sec:invariance}. It would still be interesting to compare
the value of our respective constants on standard polytopes.

\section{Pyramidal Width}\label{sec:pyramidal}

\subsection{Pyramidal Width of the Cube and Probability Simplex} \label{app:cubeWidth}
\begin{replemma}{lem:cubeWidth}
The pyramidal width of the unit cube in $\R^d$ is $1/\sqrt{d}$.
\end{replemma}
\begin{proof}[Proof of Lemma \ref{lem:cubeWidth}]
First consider a point $\x$ in the interior of the cube, and let $\r$ be the
unit length direction achieving the smallest pyramidal width for $\x$. Let $\s
= \s(\Vertices, \r)$ (the FW atom in direction~$\r$). Without loss of generality, 
by symmetry,\footnote{We thank Jean-Baptiste Alayrac for inspiring us
to use symmetry in the proof.} we can rotate
the cube so that $\s$ lies at the origin. This implies that each coordinate
of $\r$ is non-positive. Represent a vertex $\vv$ of the cube  as its set of
indices for which $v_i = 1$. Then $\innerProd{\r}{\s - \vv} = \sum_{i \in
\vv} -r_i \geq \max_{i \in \vv} |r_i|$. Consider any possible active set
$\S$; as $\x$ has all its coordinate strictly positive, for each dimension
$i$, there must exist an element of $\S$ with its $i$ coordinate equals to 1.
This means that $\max_{\vv \in \S} \innerProdCompressed{\r}{\s-\vv} \geq
\|\r\|_{\infty}$. But as~$\r$ has unit Euclidean norm, then
$\|\r\|_{\infty} \geq 1/\sqrt{d}$. Now consider $\x$ to lie on a facet of
the cube (i.e. the active set $\S$ is lower dimensional); and let $I := \{i :
r_i < 0 \}$. Since $\r$ has to be feasible from $\x$, for each $i \in I$, we
cannot have $x_i = 0$ and thus there exists an element of the active set with
its $i^{\text{th}}$ coordinate equal to 1. We thus have that $\max_{\vv \in
\S} \innerProdCompressed{\r}{\s-\vv} \geq \|\r\|_{\infty} \geq 1/\sqrt{|I|}
\geq 1/\sqrt{d}$. Using the same argument on a lower dimensional $\Kface$
give a lower bound of $1/\sqrt{\dim(\Kface)}$ which is bigger. These cover
all the possibilities appearing in the definition of the pyramidal width, and
thus the lower bound is correct. It is achieved by choosing an $\x$ in the
interior, the canonical basis as the active set $\S$, and the direction
defined by $r_i = -1/\sqrt{d}$ for each $i$. 
\end{proof}

We note that both the active set definition $\S$ and the feasibility condition on $\r$
were crucially used in the above proof to obtain such a large value
for the pyramidal width of the unit cube, thus justifying the somewhat
involved definition appearing in~\eqref{eq:Pwidth}. On the other hand, 
the astute reader might have noticed that the important quantity to lower
bound for the linear convergence rate of the different FW variants is
$\frac{\innerProdCompressed{\r_t}{\hat{\dd}_t}}{\innerProdCompressed{\r_t}{\hat{\err}_t}}$
(as in~\eqref{eq:linearProgressBound}),
rather than the looser value $\frac{1}{M} \frac{\innerProdCompressed{\r_t}{\dd_t}}{\innerProdCompressed{\r_t}{\hat{\err}_t}}$ 
that we used to handle the proof of the difficult Theorem~\ref{thm:muFdirWinterpretation}
(where we recall that $M$ is the diameter of $\domain$).
One could thus hope to get a tighter measure for the condition number of a set
by considering $\|\s -\vv\|$ (with $\s$ and $\vv$ the minimizing witnesses for
the pyramidal width) instead of the diameter $M$ in the ratio diameter / pyramidal width.
This might give a tighter constant for general sets, but in the case of the cube, it does not
change the general $\Omega(d^2)$ dependence for its condition number. To see this, suppose that $d$ is
even and let $k = d/2$. Consider the direction $\r$ with $r_i := -1$ for $1 \leq i \leq k$, and $r_i := -\epsilon$ for $(k\!+\!1) \leq i \leq d$. We thus have that the FW atom $\s(\Vertices,\r)$
is the origin as before. Consider $\x$ such that $x_i := 1/k$ for $1 \leq i \leq k$, 
and $x_i := 1$ for $(k\!+\!1) \leq i \leq d$, that is, $\x$ is the uniform convex combination
of the $k$ vertices which has only one non-zero in the first $k$ coordinates, and the last $k$
coordinates all equal to $1$. We have that $\r$ is a feasible direction from $\x$, and
that all vertices in the active set for $\x$ have the same inner product with $\r$:
$\max_{\vv \in \S} \innerProdCompressed{\r}{\s-\vv} = 1 + k \epsilon$. We thus have:
$$
\innerProd{\frac{\r}{\|\r\|}}{\frac{\s - \vv}{\|\s - \vv\|}} = \frac{1 + k \epsilon}{(\sqrt{k}\sqrt{1 + \epsilon^2}) (\sqrt{k+1})} \leq \frac{1}{k} \quad \text{for $\epsilon$ small enough}.
$$
Squaring the inverse, we thus get that the condition number of the cube is at least $k^2 = d^2/4$
even using this tighter definition, thus not changing the $\Omega(d^2)$ dependence.

\paragraph{Pyramidal Width for the Probability Simplex.} For any $\x$ in the relative
interior of the probability simplex on $d$ vertices, we have that $\S = \Vertices$,
and thus the pyramidal directional width~\eqref{eq:TruePdirW} in the feasible direction $\r$
with base point $\x$ is the same as the standard directional width.
Moreover, any face of the probability simplex is just a probability simplex in lower
dimensions (with bigger width). This is why the pyramidal width of the
probability simplex is the same as its standard width. The width of a regular simplex
was implicitly given in~\citep{Alexander:1977:simplex}; we provide more
details here on this citation. \citet{Alexander:1977:simplex} considers a regular
simplex with $k$ vertices and side length $\Delta$. For any partition of the $k$ points
into a set of $r$ and $k-r$ points (for $r \leq \left\lfloor{k/2}\right \rfloor$),
one can compute the distance $c(r,\Delta)$ between the flats (affine hulls) of the two
sets (which also corresponds to a specific directional width). 
Alexander gives a formula on the third line of p.~91 
in~\citep{Alexander:1977:simplex} for the square of this distance:
\begin{equation} \label{eq:flatDistance}
\left( c(r,\Delta) \right)^2 = \Delta^2 \frac{k}{2r (k-r)} . 
\end{equation}
The width of the regular simplex is obtained by taking the minimum of~\eqref{eq:flatDistance} with
respect to $r  \leq \left\lfloor{k/2}\right \rfloor$. As~\eqref{eq:flatDistance} is
a decreasing function up to $r = k/2$, we obtain its minimum 
by substituting $r = \left\lfloor{k/2}\right \rfloor$. By using $\Delta = \sqrt{2}$, $k=d$
and $r = \left\lfloor{d/2}\right \rfloor$ in~\eqref{eq:flatDistance},
we get that the width for the probability simplex on $d$ vertices is $2/\sqrt{d}$ when
$d$ is even and the slightly bigger $2/\sqrt{d - 1/d}$ when $d$ is odd.

From the pyramidal width perspective, one can obtain these numbers by 
considering any relative interior point of the probability simplex as $\x$
and considering the following feasible $\r$. For $d$ even,
we let $r_i := 1$ for $1 \leq i \leq d/2$ and $r_i := -1$ for $i > d/2$.
Note that $\sum_i r_i = 0$ and thus $\r$ is a feasible direction
for a point in the relative interior of the probability simplex.
Then $\max_{\s, \vv \in \Vertices} \innerProdCompressed{\frac{\r}{\| \r\|}}{\s - \vv} = \frac{1}{\sqrt{d}} (1+1) = \frac{2}{\sqrt{d}}$ as claimed. For $d$ odd, we can choose
$r_i := \frac{2}{d-1}$ for $1 \leq i \leq \frac{d-1}{2}$, and $r_i := \frac{2}{d+1}$
for $i \geq \frac{d+1}{2}$. Then $\| \r \| = \sqrt{\frac{4d}{d^2 -1}}$ and
$\max_{\s, \vv \in \Vertices} \innerProdCompressed{\frac{\r}{\| \r\|}}{\s - \vv} = \sqrt{\frac{4d}{d^2-1}}
= 2/\sqrt{d - 1/d}$ as claimed. Showing that these obtained values
were the minimum possible ones is non-trivial though, which is
why we appealed to the width of the regular simplex computed 
from~\citep{Alexander:1977:simplex}.

\subsection{Proof of Theorem~\ref{thm:muFdirWinterpretation} on the Pyramidal Width} \label{app:ProofWidth} 

In this section, we prove the main technical result in our paper: a geometric lower bound
for the crucial quantity appearing in the linear convergence rate for the FW 
optimization variants.

\begin{reptheorem}{thm:muFdirWinterpretation}
Let $\x \in \domain=\conv(\Vertices)$ be a suboptimal point and $\S$ be an
active set for $\x$. Let $\x^*$ be an optimal point and corresponding error
direction $\hat{\err} = (\x^*-\x)/\norm{\x^*-\x}$, and negative gradient $\r
:= -\nabla f(\x)$  (and so $\innerProdCompressed{\r}{\hat{\err}} > 0$). Let
$\dd^\PFW$ be the pairwise FW direction obtained over $\Vertices$ and $\S$ with negative
gradient $\r$. Then we have:
\begin{equation} \label{eq:PWidthIsBound2}
\frac{\innerProdCompressed{\r}{\dd^\PFW}}{\innerProdCompressed{\r}{\hat{\err}}}
\geq \PWidth(\Vertices) .
\end{equation}   
\end{reptheorem}

We first give a proof sketch, and then give the full proof.

Recall that a direction~$\r$ is \emph{feasible} for $\Vertices$ from $\x$ if it points inwards $\conv(\Vertices)$, 
i.e. $\r \in \text{cone}(\Vertices-\x)$.

\begin{proof}[Warm-Up Proof Sketch]
By Cauchy-Schwarz, the denominator of~\eqref{eq:PWidthIsBound2} is at most
$\norm{\r}$. If $\r$ is a feasible direction from $\x$ for $\Vertices$, then the LHS is lower bounded by $\PdirW(\Vertices, \r,
\x)$ as $\dd^\PFW$ is included as a possible $\s - \vv$ direction considered in the definition of $\PdirW$~\eqref{eq:TruePdirW}.
If $\r$ is not feasible from $\x$, this means that
$\x$ lies on the boundary of $\domain$. One can then show that the
potential $\x^*$ that can maximize $\innerProdCompressed{\r}{\hat{\err}}$ has
also to lie on a facet~$\Kface$ of~$\domain$ containing $\x$ (see Lemma~\ref{lem:minAngle} below). 
The idea is then to
project~$\r$ onto~$\Kface$, and re-do the argument with $\Kface$ replacing
$\domain$ and show that the inequality is in the right direction. This
explains why all the subfaces of $\domain$ are considered in the definition of
the pyramidal width~\eqref{eq:Pwidth}, and that only \emph{feasible}
directions are considered.
\end{proof}

\begin{lemma}[Minimizing angle is on a facet] \label{lem:minAngle}
Let $\x$ be at the origin, inside a polytope $\Kface$ and suppose that $\r \in \text{span}(\Kface)$ is not a feasible direction for $\Kface$ from $\x$ (i.e. $\r \notin \text{cone}(\Kface)$). Then a feasible direction in $\Kface$ minimizing the angle with $\r$ lies on a facet\footnote{As a reminder, we define a \emph{k-face} of
$\domain$ (a $k$-dimensional face of $\domain$) a set $\Kface$ such that
$\Kface = \domain \cap \{ \y : \innerProd{\r}{\y - \x} = \0 \}$ for some
normal vector $\r$ and fixed reference point $\x \in \Kface$ with the
additional property that $\domain$ lies on one side of the given half-space
determined by $\r$ i.e. $ \innerProd{\r}{\y - \x} \leq \0$ $\forall \y \in
\domain$. $k$ is the dimensionality of the affine hull of $\Kface$. We call a
$k$-face of dimensions $k = 0$, $1$, $\textrm{dim}(\domain)-2$ and
$\textrm{dim}(\domain)-1$ a \emph{vertex}, \emph{edge}, \emph{ridge} and
\emph{facet} respectively. $\domain$ is a $k$-face of itself with $k =
\textrm{dim}(\domain)$. See Definition 2.1 in the book of~\citetsup{Ziegler:1995td}, which we also recommend for more background material on polytopes.}%
~$\Kface'$ of $\Kface$ that includes the origin $\x$. That is:
\begin{equation} \label{eq:angleRelation}
\max_{\err \in \Kface} \innerProd{\r}{\frac{\err}{\|\err\|}} = \max_{\err \in \Kface'} \innerProd{\r}{\frac{\err}{\|\err\|}} =  \max_{\err \in \Kface'} \innerProd{\r'}{\frac{\err}{\|\err\|}}
\end{equation}
where $\Kface'$ contains $\x$, $\|\cdot \|$ is the Euclidean norm and $\r'$ is defined
as the orthogonal projection of~$\r$ on~$\text{span}(\Kface')$.
\end{lemma}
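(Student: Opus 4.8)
The plan is to reason about the maximizer $\err^\star$ of $\innerProd{\r}{\err/\norm{\err}}$ over $\err\in\Kface$ directly, using the fact that it lies in a direction strictly separated from the feasible cone $\text{cone}(\Kface)$. First I would observe that, since $\r$ is not feasible (i.e. $\r\notin\text{cone}(\Kface)$) but $\Kface$ is full-dimensional in $\text{span}(\Kface)$, the linear functional $\err\mapsto\innerProd{\r}{\err}$ is not maximized in the relative interior of $\Kface$ over the unit sphere in $\text{span}(\Kface)$; equivalently, the maximizing \emph{feasible} direction $\err^\star$ cannot lie in the relative interior of $\Kface$, for otherwise a small perturbation of $\err^\star$ in the direction of $\r$ (projected onto $\text{span}(\Kface)$, which is nonzero since $\r\in\text{span}(\Kface)$ and $\r$ is not parallel to a supporting hyperplane through $\x$) would remain feasible and strictly increase the inner product with $\r$. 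Hence $\err^\star$ lies on the relative boundary of $\Kface$, i.e.\ on some proper face; I then take $\Kface'$ to be the smallest face of $\Kface$ whose affine hull contains $\err^\star$ (equivalently the minimal face containing the ray $\x + \R_{\ge 0}\err^\star$ other than a facet, then enlarge to a facet). Since $\x$ is the origin and lies in the interior of $\Kface$, and $\err^\star\in\partial\Kface$, the segment from $\x$ to $\err^\star$ meets the boundary; the face $\Kface'$ carrying $\err^\star$ is a proper face, and by iterating (replacing $\Kface$ with $\Kface'$ if $\Kface'$ is not yet a facet) we descend to a facet. The subtlety is that the origin $\x$ need not lie \emph{in} $\Kface'$: here I would use that $\x$ is in the interior of $\Kface$, so every facet of $\Kface$ is at positive distance from $\x$; thus I should instead take $\Kface'$ to be the translate-by-$\x$ is wrong — rather, I form the facet of $\Kface$ that $\err^\star$ points toward and then translate the problem so that $\Kface'$ is replaced by its affine hull shifted to contain $\x$. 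Concretely, since only the \emph{direction} $\err^\star$ matters, I replace $\Kface'$ by $\text{cone}(\Kface'-\x)$ intersected with a ball, whose affine span is a linear subspace through the origin, and this is what is meant by ``facet $\Kface'$ of $\Kface$ that includes the origin'' after the implicit identification of $\Kface$ with its translate.

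The second equality in~\eqref{eq:angleRelation} is the routine part: for any $\err\in\text{span}(\Kface')$ we have $\innerProd{\r}{\err}=\innerProd{\r'}{\err}$ where $\r'$ is the orthogonal projection of $\r$ onto $\text{span}(\Kface')$, because $\r-\r'\perp\text{span}(\Kface')\ni\err$; dividing by $\norm{\err}$ and taking the max over $\err\in\Kface'$ gives the claim immediately. The only thing to check is that $\r'\ne\0$, i.e. that $\r$ has a nonzero component along $\text{span}(\Kface')$ — but if $\r'=\0$ then $\innerProd{\r}{\err}=0$ for all $\err\in\Kface'$, which would contradict $\innerProd{\r}{\err^\star}>0$ (and $\innerProd{\r}{\err^\star}>0$ holds because any strictly feasible direction — which exists, $\x$ being interior — has positive inner product with $\r$ unless $\r=\0$, which is excluded; alternatively one handles the degenerate case $\innerProd{\r}{\err^\star}\le 0$ separately, but then the lemma's use in Theorem~\ref{thm:muFdirWinterpretation} is vacuous since $\innerProdCompressed{\r}{\hat{\err}}>0$ there).

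The first equality — that passing from $\Kface$ to the facet $\Kface'$ does not change the optimal value — follows from the first paragraph: the maximizer $\err^\star$ over $\Kface$ already lies in $\Kface'$, so $\max_{\err\in\Kface}=\innerProd{\r}{\hat\err^\star}\le \max_{\err\in\Kface'}$, and the reverse inequality $\max_{\err\in\Kface'}\le\max_{\err\in\Kface}$ is trivial since $\Kface'\subseteq\Kface$ (after the identification with translates through $\x$).

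The main obstacle I anticipate is purely bookkeeping: making precise the identification between the facet $\Kface'$ of $\Kface$ (which is affinely, not linearly, embedded and does \emph{not} contain the interior point $\x$) and the linear subspace ``$\Kface'$ through the origin'' used in the statement. The clean way is to phrase everything in terms of feasible \emph{directions} from $\x$, i.e. in $\text{cone}(\Kface-\x)$, so that $\x$ is genuinely the apex; then a ``facet through $\x$'' is a facet of this cone, which is a pointed/affine object with $\x$ on it, and the projection argument goes through verbatim. I would structure the write-up this way to avoid the translation ambiguity, and note that the recursion terminates because each step strictly decreases $\dim(\Kface)$.
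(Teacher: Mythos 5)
There is a genuine gap, and it is exactly at the point you flag as ``the main obstacle'': identifying the facet $\Kface'$ and showing it contains $\x$. Your argument locates the maximizer $\err^\star$ on the relative boundary of the \emph{polytope} $\Kface$, but that is too weak -- a point on $\partial\Kface$ lies on \emph{some} facet of $\Kface$, not necessarily one through $\x$, and the lemma (and its use in Theorem~\ref{thm:muFdirWinterpretation}, where one needs $\S-\x\subseteq\Kface'$ because $\Kface'$ is a face of the centered polytope containing the origin) genuinely requires $\x\in\Kface'$. Your proposed repair -- reinterpreting $\Kface'$ as a translate or as $\text{cone}(\Kface'-\x)$ intersected with a ball -- changes the statement rather than proving it. Moreover, your premise that ``$\x$ is in the interior of $\Kface$, so every facet of $\Kface$ is at positive distance from $\x$'' contradicts the hypothesis: if $\x$ were in the relative interior of $\Kface$, then $\text{cone}(\Kface)=\text{span}(\Kface)$ and every $\r\in\text{span}(\Kface)$ would be feasible, so the lemma would be vacuous. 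The hypothesis forces $\x$ onto the relative boundary, and that is precisely why a facet through $\x$ exists.

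The missing idea (and the paper's route) is to work with the cone of feasible directions rather than with the polytope: since the objective $\innerProd{\r}{\err/\norm{\err}}$ is scale-invariant, optimize $\innerProd{\r}{\hat\err}$ over the unit sphere intersected with $\text{cone}(\Kface)$. A stationarity argument (the paper uses KKT for the sphere constraint; your perturbation idea works too once you perturb the \emph{normalized} objective and note the maximizer cannot be $\pm\hat\r$) shows the maximizing direction cannot lie in the relative interior of $\text{cone}(\Kface)$, hence lies on a facet of that \emph{cone}. A facet of $\text{cone}(\Kface)$ is cut out by a linear supporting hyperplane through the apex $\x=\0$, which also supports $\Kface$ at $\x$; the corresponding face of $\Kface$ is a facet containing $\x$, with no translation ambiguity. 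With that in hand, the first equality of~\eqref{eq:angleRelation} follows because the maximizer over $\Kface$ is attained in $\Kface'\subseteq\Kface$, and your treatment of the second equality (orthogonality of $\r-\r'$ to $\text{span}(\Kface')$, which is a genuine linear subspace precisely because $\0=\x\in\Kface'$) is correct as written.
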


\begin{figure}[t]
\begin{center}
\includegraphics[width=0.7\linewidth]{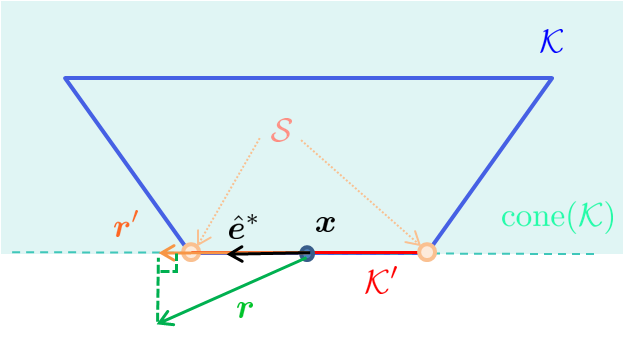}
\end{center}
\vspace{-4mm}
\caption{Depiction of the quantities in the proof of Lemma~\ref{lem:minAngle}. 
If $\r \notin \text{cone}(\Kface)$, but $\r \in \text{span}(\Kface)$, then the unit vector
direction~$\hat{\err}^*$ minimizing the angle with~$\r$ is generated
by a point~$\x^*$ lying on a facet~$\Kface'$ of the polytope~$\Kface$
that contains~$\x$. \vspace{5mm}} \label{fig:rFig}
\end{figure}

\begin{proof}
This seems like an obvious geometric fact (see Figure~\ref{fig:rFig}), but we prove it formally, as sometimes high dimensional geometry is tricky (for example, the result is false without the assumption that $\r \in \text{span}(\Kface)$ or if $\|\cdot\|$ is not the Euclidean norm). Rewrite the optimization variable on the LHS of~\eqref{eq:angleRelation} as $\hat{\err} = \frac{\err}{\|\err\|}$. The optimization domain for $\hat{\err}$ is thus the intersection between the unit sphere and $\text{cone}(\Kface)$.
We now show that any maximizer $\hat{\err}^*$ cannot lie in the relative interior of $\text{cone}(\Kface)$, and thus it has to lie on a \emph{facet} of $\text{cone}(\Kface)$, implying then that a corresponding maximizer $\err^*$ is lying on a facet of $\Kface$ containing $\x$, concluding the proof for the first equality in~\eqref{eq:angleRelation}.

First, as $\r \in \text{span}(\Kface)$, we can consider without loss of generality that $\text{cone}(\Kface)$ is full dimensional by projecting on its affine hull if needed. We want to solve $\max_{\hat{\err}} \innerProd{\r}{\hat{\err}}$ s.t. $\| \hat{\err}\|^2=1$ and $\hat{\err} \in \text{cone}(\Kface)$. By contradiction, we suppose that $\hat{\err}^*$ lies in the interior of $\text{cone}(\Kface)$, and so we can remove the polyhedral cone constraint. The gradient of the objective is the constant $\r$ and the gradient of the equality constraint is $2\hat{\err}$. By the Karush-Kuhn-Tucker (KKT) necessary conditions for a stationary point to the problem with the only equality constraint $\| \hat{\err}\|^2=1$ (see e.g.~\citepsup[Proposition 3.3.1 in][]{bertsekas1999nonlinear}), 
then the gradient of the objective is collinear to the gradient of the equality constraint, i.e. we have $\hat{\err}^* = \pm \hat{\r}$. Since $\hat{\r}$ is not feasible, then $\hat{\err}^* = -\hat{\r}$, which is actually a local \emph{minimum} of the inner product by Cauchy-Schwarz. We thus conclude that the maximizing $\hat{\err}^*$ lies on the boundary of $\text{cone}(\Kface)$, concluding the proof for the first equality in~\eqref{eq:angleRelation}.

For the second equality in~\eqref{eq:angleRelation}, we simply use the fact that $\r - \r'$ is orthogonal
to the elements of~$\Kface'$ by the definition of the orthogonal projection.
\end{proof}

\begin{proof}[Proof of Theorem~\ref{thm:muFdirWinterpretation}]
Let $\hat{\err}(\x^*) := \frac{\x^*-\x}{\norm{\x^*-\x}}$ be the normalized error vector. 
We consider the worst-case possibility for $\x^*$. As $\x$ is not optimal, we require that $\innerProd{\r}{ \hat{\err}(\x^*)} > 0$.
We recall that by definition of the pairwise FW direction:
\begin{equation} \label{eq:simplePDirWidthBound}
\innerProd{\frac{\r}{\|\r\|}}{\dd^\PFW} = \max_{\s \in \Vertices, \vv \in \S} \innerProd{\frac{\r}{\|\r\|}}{\s - \vv} 
\geq \min_{\S' \in \S_{\x}} \max_{\s \in \Vertices, \vv \in \S'} \innerProd{\frac{\r}{\|\r\|}}{\s - \vv} = \PdirW(\Vertices, \r, \x). 
\end{equation}
By Cauchy-Schwarz, we always have $\innerProd{\r}{\hat{\err}(\x^*)} \leq \|\r\|$. 
If $\r$ is a feasible direction from~$\x$ in~$\conv(\Vertices)$, then $\r$ appears in the set of directions
considered in the definition of the pyramidal width~\eqref{eq:Pwidth} for $\Vertices$ and so from~\eqref{eq:simplePDirWidthBound}, we have that the inequality~\eqref{eq:PWidthIsBound2} holds.

If $\r$ is not a feasible direction, then we iteratively project it on the faces of $\domain$ until we get a feasible direction $\r'$, obtaining a term $\PdirW(\Vertices \cap \Kface, \r', \x)$ for some 
face $\Kface$ of $\domain$ as appearing in the definition of the pyramidal width~\eqref{eq:Pwidth}. 
The rest of the proof formalizes this process. As $\x$ is fixed, 
we work on the centered polytope at $\x$ to simplify the statements, i.e. 
let $\tilde{\domain} := \domain - \x$. We have
the following worst case lower bound for~\eqref{eq:PWidthIsBound2}:
\begin{equation} \label{eq:MainLowerBound}
\frac{\innerProdCompressed{\r}{\dd^\PFW}}{\innerProdCompressed{\r}{\hat{\err}}} \geq
	\bigg( \max_{\s \in \Vertices, \vv \in \S} \innerProdCompressed{\r}{\s - \vv} \bigg)
	\left( \max_{\err \in \tilde{\domain}} \, \innerProdCompressed{\r}{\frac{\err}{\|\err\|}} \right)^{-1} . 
\end{equation}
The first term on the RHS of~\eqref{eq:MainLowerBound} just comes from the definition of
$\dd^\PFW$ (with equality), whereas the second term is considering the worst case
possibility for $\x^*$ to lower bound the LHS. Note also that the second term has to
be strictly greater to zero since $\x$ is not optimal.

Without loss of generality, we can assume that $\r \in \text{span}(\tilde{\domain})$ (otherwise, just project it),
as any orthogonal component would not change the inner products appearing in~\eqref{eq:MainLowerBound}.
If (this projected) $\r$ is feasible from $\x$, then $\max_{\err \in \tilde{\domain}} \, \innerProdCompressed{\r}{\frac{\err}{\|\err\|}} = \| \r \|$, and we again
have the lower bound~\eqref{eq:simplePDirWidthBound} arising in the definition
of the pyramidal width.

We thus now suppose that $\r$ is not feasible. By the Lemma~\ref{lem:minAngle},
we have the existence of a facet~$\Kface'$ of~$\tilde{\domain}$
that includes the origin $\x$ such that:
\begin{equation} \label{eq:angleEquality}
\max_{\err \in \tilde{\domain}} \innerProd{\r}{\frac{\err}{\|\err\|}} = \max_{\err \in \Kface'} \innerProd{\r}{\frac{\err}{\|\err\|}} = \max_{\err \in \Kface'} \innerProd{\r'}{\frac{\err}{\|\err\|}},
\end{equation}
where~$\r'$ is the result of the orthogonal projection of~$\r$ on~$\text{span}(\Kface')$.
We now look at how the numerator of~\eqref{eq:MainLowerBound} transforms
when considering $\r'$ and $\Kface'$:
\begin{align}
	\max_{\s \in \Vertices, \vv \in \S} \innerProdCompressed{\r}{\s - \vv}   &= 
	 \max_{\s \in \domain} \innerProd{\r}{ \s - \x} +
     \max_{\vv \in \S} \innerProd{-\r}{ \vv - \x} 
\nonumber \\
	 &\ge \max_{\s \in (\Kface'+\x)} \innerProd{\r}{  \s - \x } +
 \max_{\vv \in \S \cap (\Kface' + \x)}
\innerProd{-\r}{ \vv - \x} \nonumber \\
	 &=  \max_{\s \in (\Kface'+\x)} \innerProd{\r'}{  \s - \x } +
	 	  \max_{\vv \in \S } \innerProd{-\r'}{
\vv - \x} \nonumber \\
     &= \max_{\s \in \Vertices \cap (\Kface' + \x), \vv \in \S} \innerProd{\r'}{\s - \vv} .
     \label{eq:d1sf}
\end{align}
To go from the first to the second
line, we use the fact that the first term yields an inequality as $(\Kface'+\x)
\subseteq (\tilde{\domain}+ \x) = \domain$. Also, since 
$\x$ is in the relative interior of $\conv(\S)$ (as $\x$ is a \emph{proper} 
convex combination of elements of $\S$ by definition), we have that
$(\S - \x) \subseteq \Kface$ for any face~$\Kface$ of~$\tilde{\domain}$
containing the origin $\x$. Thus $\S = \S\cap (\Kface'+\x)$, and the
second term on the first line actually yields an equality for the second line. 
The third line uses the
fact that $\r - \r'$ is orthogonal to members of $\Kface'$,
as $\r'$ is obtained by orthogonal projection.

Plugging~\eqref{eq:angleEquality} and~\eqref{eq:d1sf} into the inequality~\eqref{eq:MainLowerBound},
we get:
\begin{equation} \label{eq:MainLowerBound2}
\frac{\innerProdCompressed{\r}{\dd^\PFW}}{\innerProdCompressed{\r}{\hat{\err}}} \geq
	\bigg( \max_{\substack{\s \in \Vertices \cap (\Kface' + \x), \\ \vv \in \S}} \innerProdCompressed{\r'}{\s - \vv} \bigg)
	\left( \max_{\err \in \Kface'} \, \innerProdCompressed{\r'}{\frac{\err}{\|\err\|}} \right)^{-1} .
\end{equation}
We are back to a similar situation to~\eqref{eq:MainLowerBound}, with the lower dimensional~$\Kface'$ playing the role of the polytope~$\tilde{\domain}$, and $\r' \in \text{span}(\Kface')$ playing the role of $\r$.
If $\r'$ is feasible from $\x$ in $\Kface'$, then re-using the previous argument,
we get $\PdirW(\Vertices \cap (\Kface' + \x), \r', \x)$ as the lower bound, which is part
of the definition of the pyramidal width of $\Vertices$ (note that we have $(\Kface'\!+\!\x)$
as $\Kface'$ is a face of the \emph{centered} polytope $\tilde{\domain}$). 
Otherwise (if $\r \not \in \text{cone}(\Kface')$),
then we use Lemma~\ref{lem:minAngle} again to get a facet~$\Kface''$ of~$\Kface'$ as
well as a new direction~$\r''$ which is the orthogonal projection of~$\r'$ on~$\text{span}(\Kface'')$
such that we can re-do the manipulations for~\eqref{eq:angleEquality} and~\eqref{eq:MainLowerBound2},
yielding $\PdirW(\Vertices \cap (\Kface'' + \x), \r'', \x)$ as a lower bound
if~$\r''$ is feasible from~$\x$ in~$\Kface''$. As long as we do not obtain a feasible
direction, we keep re-using Lemma~\ref{lem:minAngle} to project the direction
on a lower dimensional face of
$\tilde{\domain}$ that contains $\x$. This process must stop at some point; ultimately,
we will reach the lowest dimensional face~$\Kface_{\x}$ that contains~$\x$.
As~$\x$ lies in the relative interior of~$\Kface_{\x}$, then all 
directions in~$\text{span}(\Kface_{\x})$ are feasible, and so the projected
$\r$ will have to be feasible.
Moreover, by stringing together the equalities of the type~\eqref{eq:angleEquality}
for all the projected directions, we know that $\max_{\err \in \Kface_{\x}} \innerProdCompressed{\r_{\mathrm{final}}}{\frac{\err}{\|\err\|}} > 0$ (as we originally
had $\innerProdCompressed{\r}{\hat{\err}} > 0$), and thus $\Kface_{\x}$ is 
at least one-dimensional and we also have $\r_{\mathrm{final}} \neq \0$
(this last condition is crucial to avoid having a lower bound of zero!).
This concludes the proof, and also explains why in the definition
of the pyramidal width~\eqref{eq:Pwidth}, we consider
the pyramidal directional width for
all the faces of $\conv(\Vertices)$ and respective non-zero 
feasible direction $\r$.
\end{proof}

\section{Affine Invariant Formulation}\label{sec:invariance}

Here we provide linear convergence proofs in terms of affine invariant quantities, since all the Frank-Wolfe algorithm variants presented in this paper are affine invariant. The statements presented in the main paper above are special cases of the following more general theorems, by using the bounds~\eqref{eq:CfBound} for the curvature constant $\Cf$, and Theorem~\ref{thm:muFdirWinterpretation2} for the affine invariant strong convexity~$\strongConvAFW$.

An optimization method is called \emph{affine invariant} if it is invariant
under affine transformations of the input problem: If one chooses any
re-parameterization of the domain~$\domain$ by a \emph{surjective} linear or
affine map $\A:\hat\domain\rightarrow\domain$, then the ``old'' and ``new''
optimization problems $\min_{\x\in\domain}f(\x)$ and
$\min_{\hat\x\in\hat\domain}\hat f(\hat\x)$ for $\hat f(\hat\x):=f(\A\hat\x)$
look completely the same to the algorithm.

More precisely, every ``new'' iterate must remain exactly the transform of
the corresponding old iterate; an affine invariant analysis should thus yield
the convergence rate and constants unchanged by the transformation. It is
well known that Newton's method is affine invariant under invertible~$\A$,
and the Frank-Wolfe algorithm and all the variants presented here are affine
invariant in the even stronger sense under arbitrary surjective
$\A$~\citep{Jaggi:2013wg}. (This is directly implied if the algorithm and all
constants appearing in the analysis only depend on inner products with the
gradient, which are preserved since $\nabla \hat f = \A^T\nabla f$.)

Note however that the property of being an extremum point (vertex) of~$\domain$ is
\emph{not} affine invariant (see~\citep[Section~3.1]{Beck:2015vo} for an
example). This explains why we presented all algorithms here as
working with atoms $\Vertices$ rather than vertices of the domain, thus maintaining the affine invariance of the algorithms as well as their convergence analysis.

\paragraph{Affine Invariant Measures of Smoothness.}
The affine invariant convergence analysis of the standard Frank-Wolfe
algorithm by \citep{Jaggi:2013wg} crucially relies on the following measure
of non-linearity of the objective function $f$ over the domain $\domain$. The
(upper) \emph{curvature constant} $C_{f}$ of a convex and differentiable
function $f:\R^d\rightarrow\R$, with respect to a compact domain $\domain$ is defined as
\begin{equation}\label{eq:Cf}
  \Cf := \sup_{\substack{\x,\s\in \domain,  ~\stepsize\in[0,1],\\
                      \y = \x+\stepsize(\s-\x)}} \textstyle
           \frac{2}{\stepsize^2}\big( f(\y)-f(\x)-\innerProd{\nabla f(\x)}{\y-\x}\big) \ .
\end{equation}

The definition of $\Cf$ closely mimics the fundamental descent
lemma~\eqref{eq:descentLemma}. 
The assumption of bounded curvature $\Cf$ closely corresponds to a Lipschitz
assumption on the gradient of~$f$. %
More precisely, if~$\nabla f$ is $L$-Lipschitz continuous on $\domain$ with
respect to some arbitrary chosen norm $\norm{.}$ in dual pairing, i.e.
$\norm{\nabla f(\x) - \nabla f(\y)}_* \leq L \norm{\x-\y}$, then
\begin{equation} \label{eq:CfBound}
\Cf \le L \diam_{\norm{.}}(\domain)^2  \ ,
\end{equation}
where $\diam_{\norm{.}}(.)$ denotes the $\norm{.}$-diameter, see \citep[Lemma
7]{Jaggi:2013wg}.
While the early papers \citep{Frank:1956vp,Dunn:1979da} on the Frank-Wolfe
algorithm relied on such Lipschitz constants with respect to a norm, 
the curvature constant $\Cf$ here is affine invariant, does not depend on any
norm, and gives tighter convergence rates. 
The quantity $\Cf$ combines the complexity of the domain $\domain$ and the curvature of the objective function $f$ into a
single quantity. The advantage of this combination is well illustrated in~\citep[Lemma A.1]{LacosteJulien:2013ue}, where Frank-Wolfe was used to
optimize a quadratic function over product of probability simplices with an
exponential number %
of dimensions. In this case, the Lipschitz constant could
be exponentially worse than the curvature constant which does take the simplex 
geometry of $\domain$ into account.

\paragraph{An Affine Invariant Notion of Strong Convexity which Depends on the
Geometry of $\domain$.} We now present the affine invariant analog of the
strong convexity bound~\eqref{eq:strongLower}, which could be interpreted as
the \emph{lower} curvature $\strongConvAFW$ analog of $\Cf$. The role of
$\stepsize$ in the definition~\eqref{eq:Cf} of~$\Cf$ was to define an affine
invariant scale by only looking at proportions over lines (as line segments
between $\x$ and~$\s$ in this case). The trick here is to use anchor points
in~$\Vertices$ in order to define standard lengths (by looking at proportions
on lines). These anchor points ($\s_f(\x)$ and $\vv_f(\x)$ defined below) are
motivated directly from the FW atom and the away atom appearing in the away-steps FW 
algorithm. Specifically, let $\x^*$ be a potential optimal point
and $\x$ a non-optimal point; thus we have $\innerProd{-\nabla f(\x)}{\x^*-\x} > 0$ 
(i.e. $\x^*\!-\!\x$ is a strict descent direction from $\x$ for $f$). 
We then define the positive step-size quantity:
\begin{equation}\label{eq:gammaAway}
\stepsize^\away(\x,\x^*) := \frac{\innerProd{-\nabla f(\x)}{\x^*-\x}
}{\innerProd{-\nabla f(\x)}{\s_f(\x) - \vv_f(\x)} } \ . 
\end{equation}
This quantity is
motivated from both~\eqref{eq:AFWgapInequality} and the linear rate
inequality~\eqref{eq:linearProgressBound}, and enables to transfer lengths
from the error $\err_t = \x^* - \x_t$ to the pairwise FW direction $\dd_t^\PFW = \s_f(\x_t) - \vv_f(\x_t)$.
More precisely, $\s_f(\x) :=$%
$~\argmin_{\vv \in \Vertices} \left\langle \nabla f(\x), \vv \right\rangle$ is the
standard FW atom. To define the away-atom, we consider all possible expansions of 
$\x$ as a convex combination of atoms.\footnote{As we are working with general polytopes,
the expansion of a point as a convex combination of atoms is not 
necessarily unique.}
We recall that set of possible active sets is $\S_{\x} := \{ \S \, | \, \S \subseteq \Vertices$ such that $\x$ is a
proper convex combination of
all the elements in $\S\}$.
For a given set~$\S$, we write $\vv_{\S}(\x) := \argmax_{\vv \in \S }
\left\langle \nabla f(\x), \vv \right\rangle$ for the away atom in the
algorithm supposing that the current set of active atoms is $\S$.
Finally, we define $\vv_f(\x) := \hspace{-3mm}\displaystyle\argmin_{\{\vv = \vv_{\S}(\x)
\,|\, \S \in \S_{\x} \}} \textstyle \hspace{-3mm}\left\langle \nabla f(\x), \vv
\right\rangle$ to be the worst-case away atom (that is, the atom which would
yield the smallest away descent).

We then define the \emph{geometric strong convexity} constant
$\strongConvAFW$ which depends \emph{both} on the function~$f$ and the
domain~$\domain=\conv(\Vertices)$:
\begin{equation}\label{eq:muf}
  \strongConvAFW \ :=\  \inf_{\x\in \domain} \inf_{\substack{\x^* \in \domain\\
                        \textrm{s.t. } \left\langle \nabla f(\x), \x^*-\x \right\rangle < 0 }}
           \frac{2}{{\stepsize^\away(\x,\x^*)}^2}
           \big( f(\x^*)-f(\x)-\left\langle \nabla f(\x),  \x^*-\x \right\rangle \big) \ .
\end{equation}

\subsection{Lower Bound for the Geometric Strong Convexity Constant $\strongConvAFW$}
The geometric strong convexity constant $\strongConvAFW$, as defined in
(\ref{eq:muf}), is affine invariant, since it only depends on the inner
products of feasible points with the gradient. Also, it combines both the
complexity of the function~$f$ and the geometry of the domain $\domain$.
Theorem~\ref{thm:muFdirWinterpretation2} allows us to lower bound the constant
$\strongConvAFW$ in terms of the strong convexity of the objective function,
combined with a purely geometric complexity measure of the domain $\domain$
(its pyramidal width $\PWidth(\Vertices)$~\eqref{eq:Pwidth}).
In the following Section~\ref{sec:conv} below, we will show the linear
convergence of the four variants of the FW algorithm presented in this paper under the assumption that
$\strongConvAFW > 0$. 

In view of the following Theorem~\ref{thm:muFdirWinterpretation2}, we have 
that the condition $\strongConvAFW > 0$ is slightly weaker
 than the strong 
convexity of the objective function\footnote{As an example
of function that is not strongly convex but can still have $\strongConvAFW > 0$,
consider $f(\x) := g(\A \x)$ where $g$ is $\mu_g$-strongly convex, but the
matrix $\A$ is rank deficient. Then by using the affine invariance
of the definition of~$\strongConvAFW$ and 
using Theorem~\eqref{thm:muFdirWinterpretation2} applied
on the equivalent problem on~$g$ with domain~$\conv(\A \Vertices)$,
we get $\strongConvAFW \geq \mu_g \cdot (\PWidth(\A \Vertices))^2 > 0$.
} over a polytope domain 
(it is implied by strong convexity).

\begin{theorem}\label{thm:muFdirWinterpretation2}
Let $f$ be a convex differentiable function and suppose that $f$ is
$\mu$-\emph{strongly convex} w.r.t. to the Euclidean norm
$\norm{\cdot}$ over the domain $\domain=\conv(\Vertices)$ with strong-convexity constant $\mu
\geq 0$. Then
\begin{equation} \label{eq:muDirWidthBound}
\strongConvAFW \geq \mu \cdot \left( \PWidth(\Vertices) \right)^2 \ .
\end{equation}
\end{theorem}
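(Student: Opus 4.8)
The plan is to reduce the bound on $\strongConvAFW$ directly to the pyramidal-width bound already established in Theorem~\ref{thm:muFdirWinterpretation}. Fix a non-optimal point $\x\in\domain$ and a feasible descent target $\x^*\in\domain$ with $\innerProdCompressed{\nabla f(\x)}{\x^*-\x}<0$, and set $\r := -\nabla f(\x)$, so $\innerProdCompressed{\r}{\x^*-\x}>0$. The quantity appearing inside the infimum defining $\strongConvAFW$ in~\eqref{eq:muf} is
\[
\frac{2}{\stepsize^\away(\x,\x^*)^2}\big(f(\x^*)-f(\x)-\innerProd{\nabla f(\x)}{\x^*-\x}\big).
\]
First I would apply $\mu$-strong convexity of $f$ w.r.t.\ the Euclidean norm to lower bound the bracket: $f(\x^*)-f(\x)-\innerProd{\nabla f(\x)}{\x^*-\x}\geq \tfrac{\mu}{2}\norm{\x^*-\x}^2$. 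Plugging in the definition~\eqref{eq:gammaAway} of $\stepsize^\away(\x,\x^*)=\innerProdCompressed{\r}{\x^*-\x}/\innerProdCompressed{\r}{\s_f(\x)-\vv_f(\x)}$, the whole expression is at least
\[
\mu\,\norm{\x^*-\x}^2\,\frac{\innerProdCompressed{\r}{\s_f(\x)-\vv_f(\x)}^2}{\innerProdCompressed{\r}{\x^*-\x}^2}
= \mu\,\frac{\innerProdCompressed{\r}{\s_f(\x)-\vv_f(\x)}^2}{\innerProdCompressed{\r}{\hat{\err}}^2},
\]
where $\hat{\err}=(\x^*-\x)/\norm{\x^*-\x}$; here the $\norm{\x^*-\x}^2$ in the numerator cancels against the norm hidden in $\innerProdCompressed{\r}{\x^*-\x}^2 = \norm{\x^*-\x}^2\innerProdCompressed{\r}{\hat{\err}}^2$.

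Next I would handle the worst-case away atom. By definition $\vv_f(\x)$ is the atom minimizing $\innerProdCompressed{\nabla f(\x)}{\cdot}$ over the collection of away atoms $\{\vv_\S(\x) : \S\in\S_\x\}$, equivalently maximizing $\innerProdCompressed{\r}{\cdot}$ is \emph{not} what we want — rather $\vv_f(\x)$ is chosen to make $\innerProdCompressed{\r}{\s_f(\x)-\vv_f(\x)}$ as small as possible over all active sets. Concretely, for each $\S\in\S_\x$ the away atom $\vv_\S(\x)=\argmax_{\vv\in\S}\innerProdCompressed{\nabla f(\x)}{\vv}=\argmin_{\vv\in\S}\innerProdCompressed{\r}{\vv}$, so $\innerProdCompressed{\r}{\s_f(\x)-\vv_\S(\x)} = \innerProdCompressed{\r}{\s_f(\x)} - \min_{\vv\in\S}\innerProdCompressed{\r}{\vv} = \max_{\s\in\Vertices,\vv\in\S}\innerProdCompressed{\r}{\s-\vv}$, and taking $\vv_f$ corresponds to taking $\min_{\S\in\S_\x}$ of this, i.e.\ exactly $\norm{\r}\cdot\PdirW(\Vertices,\r,\x)$ when $\r$ is feasible. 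Thus $\innerProdCompressed{\r}{\s_f(\x)-\vv_f(\x)}$ is precisely the $\innerProdCompressed{\r}{\dd^\PFW}$-type quantity — with the worst-case active set — that Theorem~\ref{thm:muFdirWinterpretation} lower bounds. Since Theorem~\ref{thm:muFdirWinterpretation} holds for an \emph{arbitrary} active set $\S$ for $\x$ (and the pairwise FW direction obtained from it), it applies in particular to the worst-case $\S$ realizing $\vv_f(\x)$, giving
\[
\frac{\innerProdCompressed{\r}{\s_f(\x)-\vv_f(\x)}}{\innerProdCompressed{\r}{\hat{\err}}}\ \geq\ \PWidth(\Vertices).
\]
Squaring and multiplying by $\mu$ yields that the expression inside the infimum in~\eqref{eq:muf} is at least $\mu\,(\PWidth(\Vertices))^2$; taking the infimum over $\x$ and $\x^*$ gives $\strongConvAFW\geq\mu\,(\PWidth(\Vertices))^2$, as claimed.

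The only real subtlety — and the step I would be most careful with — is the matching between Theorem~\ref{thm:muFdirWinterpretation}'s hypotheses and the worst-case away atom $\vv_f(\x)$: the theorem is phrased for "the pairwise FW direction obtained over $\Vertices$ and $\S$", and one must verify that for the specific active set $\S$ achieving the minimum in the definition of $\vv_f(\x)$, the induced pairwise direction $\s_f(\x)-\vv_\S(\x)$ is exactly the one the theorem talks about (this uses that the FW atom $\s(\Vertices,\r)=\s_f(\x)$ is independent of $\S$). One also needs $\x^*$ genuinely optimal in Theorem~\ref{thm:muFdirWinterpretation}, but since that theorem's proof only uses that $\innerProdCompressed{\r}{\hat{\err}}>0$ and bounds the worst case over all such $\hat{\err}$, it applies verbatim to any feasible descent direction $\x^*-\x$, which is all we need here. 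Everything else is the elementary strong-convexity inequality and bookkeeping with the definition of $\stepsize^\away$.
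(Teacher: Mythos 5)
Your proposal is correct and follows essentially the same route as the paper's own proof: apply the $\mu$-strong-convexity lower bound to the Bregman term in the definition of $\strongConvAFW$, substitute the definition of $\stepsize^\away$ to expose the ratio $\innerProdCompressed{\r}{\s_f(\x)-\vv_f(\x)}/\innerProdCompressed{\r}{\hat{\err}}$, and invoke Theorem~\ref{thm:muFdirWinterpretation} for the worst-case active set realizing $\vv_f(\x)$. Your two flagged subtleties (that $\s_f(\x)-\vv_f(\x)$ is a legitimate pairwise FW direction for some $\S\in\S_{\x}$, and that Theorem~\ref{thm:muFdirWinterpretation} only needs $\innerProdCompressed{\r}{\hat{\err}}>0$ rather than genuine optimality of $\x^*$) are exactly the points the paper glosses over, and you resolve them correctly.
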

\begin{proof}
By definition of strong convexity with respect to a norm, we have that for any $\x,\y\in\domain$,
\begin{equation} \label{eq:strongConv}
f(\y)- f(\x)- \langle\nabla f(\x), \y-\x \rangle
\geq \textstyle\frac{\mu}{2} \norm{\y-\x}^2 \ .
\end{equation}
Using the strong convexity bound~\eqref{eq:strongConv} with $\y := \x^*$ on the right hand side of equation~\eqref{eq:muf} (and using the shorthand $\r_{\x} := -\nabla f(\x)$ ), we thus get:
\begin{align}
\strongConvAFW \geq&  \inf_{\substack{\x, \x^* \in \domain\\
                                   \textrm{s.t. } \innerProd{\r_{\x}}{\x^*-\x} > 0}}
                      \mu \left(  \frac{\innerProd{\r_{\x}}{ \s_f(\x) - \vv_f(\x)}}{\innerProd{\r_{\x}}{\x^*-\x}} \norm{{\x^*-\x}} \right)^2 \nonumber \\
		&=  \mu \inf_{\substack{\x \neq \x^* \in \domain \\
                        \textrm{s.t. } \innerProd{\r_{\x}}{ \hat{\err}} > 0}}
           \left(  \frac{\innerProd{\r_{\x}}{ \s_f(\x) - \vv_f(\x)}}{\innerProd{\r_{\x}}{ \hat{\err}(\x^*,\x)}} \right)^2  , \label{eq:mufInitial}
\end{align}
where $\hat{\err}(\x^*,\x) := \frac{\x^*-\x}{\norm{\x^*-\x}}$ is the unit length feasible
direction from $\x$ to $\x^*$. We are thus taking an infimum over all
possible feasible directions starting from $\x$ (i.e. which moves within
$\domain$) with the additional constraint that it makes a positive inner
product with the negative gradient $\r_{\x}$, i.e. it is a strict descent
direction. This is only possible if $\x$ is not already optimal, i.e. $\x \in
\domain \setminus \X^*$ where $\X^* := \{\x^* \in \domain :
\innerProd{\r_{\x^*}}{\x-\x^*} \leq 0 \,\, \forall \x \in \domain \}$ is the
set of optimal points. 

We note that $\s_f(\x) - \vv_f(\x)$ is a valid pairwise FW direction for a specific active set $\S$ for $\x$, and so we can re-use~\eqref{eq:PWidthIsBound2} from Theorem~\ref{thm:muFdirWinterpretation} for the right hand side of \eqref{eq:mufInitial} to conclude the proof.
\end{proof}

We now proceed to present the main linear convergence result in the next section, using only the mentioned affine invariant quantities.

\section{Linear Convergence Proofs}\label{sec:conv}

\paragraph{Curvature Constants.}
Because of the additional possibility of the away step in
Algorithm~\ref{alg:AFW}, we need to define the following slightly modified
additional curvature constant, which will be needed for the linear
convergence analysis of the algorithm:
\begin{equation}\label{eq:CfAFW}
  \CfAFW := \sup_{\substack{\x,\s,\vv\in \domain, \\
                      \stepsize\in[0,1],\\
                      \y = \x+\stepsize(\s-\vv)}}
           \frac{2}{\stepsize^2}\big( f(\y)-f(\x)-\stepsize \langle \nabla f(\x), \s-\vv
\rangle \big) \ .
\end{equation}
By comparing with $\Cf$~\eqref{eq:Cf}, we see that the modification is that
$\y$ is defined with any direction $\s -\vv$ instead of a standard
FW direction $\s - \x$. This allows to use the away direction or the pairwise FW direction even though these might yield some $\y$'s which are outside of the
domain $\domain$ when using $\stepsize > \stepmax$ (in fact, $\y \in \domain^\away := \domain + (\domain -
\domain)$ in the Minkowski sense). On the other hand, by re-using a similar
argument as in \citep[Lemma 7]{Jaggi:2013wg}, we can obtain the same
bound~\eqref{eq:CfBound} for $\CfAFW$, with the only difference that the
Lipschitz constant $L$ for the gradient function has to be valid on
$\domain^\away$ instead of just $\domain$.

\begin{remark}\label{rem:mfAFWsmallerThanCf}
For all pairs of functions $f$ and compact domains $\domain$, it holds that $\strongConvAFW \le \Cf$ (and $\Cf \le \CfAFW$).
\end{remark}\vspace{-2mm}
\begin{proof}
Let $\x$ be a vertex of $\domain$, so that $\S = \{\x\}$. Then $\x = \vv_f(\x)$. Pick $\x^* := \s_f(\x)$ and substitute in the definition for~$\strongConvAFW$~\eqref{eq:muf}. Then $\stepsize^\away(\x,\x^*) = 1$ and so we have $\y := \x^* = \x + \stepsize (\x^*-\x)$ with $\stepsize = 1$ which can also be used in the definition of~$\Cf$~\eqref{eq:Cf}. Thus, we have $\strongConvAFW \leq 2\left( f(\y)-f(\x) - \langle \nabla f(\x), \y-\x \rangle \right) \leq \Cf$.
\end{proof}

We now give the global linear convergence rates for the four variants of the
FW algorithm: away-steps FW (AFW Algorithm~\ref{alg:AFW}); pairwise FW
(PFW Algorithm~\ref{alg:PFW}); fully-corrective FW (FCFW
Algorithm~\ref{alg:FCFW} with approximate correction as per Algorithm~\ref{alg:correction}); 
and Wolfe's min-norm point algorithm (Algorithm~\ref{alg:FCFW} with
MNP-correction given in Algorithm~\ref{alg:MNP}). For the AFW, MNP  and PFW algorithms, we call
a \emph{drop step} when the active set shrinks, i.e. $|S^{(t+1)}| < |S^{(t)}|$. For
the PFW algorithm, we also have the possibility of a \emph{swap step}, where
$\stepsize_t = \stepmax$ but the size of the active set stays constant $|S^{(t+1)}| = |S^{(t)}|$ 
(i.e. the mass gets
fully swapped from the away atom to the FW atom). We note that a nice
property of the FCFW variant is that it does not have any drop steps (it executes both FW
steps and away steps simultaneously while guaranteeing enough progress at
every iteration).

\begin{theorem}\label{thm:megaConvergenceTheorem2}
Suppose that $f$ has smoothness constant $\CfAFW$ ($\Cf$ for FCFW and
MNP),
as well as geometric strong convexity constant~$\strongConvAFW$ as defined
in~\eqref{eq:muf}.
Then the suboptimality $h_t := f(\x^{(t)}) - f(\x^*)$ of the iterates of 
all the four variants of the FW algorithm %
decreases geometrically at each step that is not a drop step nor a swap step (i.e.
when $\stepsize_t < \stepmax$, called a `good step'\footnote{Note that any step with $\stepmax \geq 1$ can also
be considered a `good step', even if $\stepsize_t = \stepmax$, as is apparent from the proof. The problematic
steps arise only when $\stepmax \ll 1$.}), that is
\[
h_{t+1} \leq \left(1-\rho_f\right) h_t \ ,\vspace{-2mm}
\]
where:
\begin{align*}
	\rho_f &:= \frac{\strongConvAFW}{4\CfAFW} \quad \text{for the AFW algorithm,}  & 
		\rho_f &:= \min\left\{\frac{1}{2}, \frac{\strongConvAFW}{\CfAFW} \right\} \quad \text{for the PFW algorithm,}\\
	\rho_f &:= \frac{\strongConvAFW}{4\Cf} \quad \text{for the FCFW algorithm,} &
		\rho_f &:= \min\left\{\frac{1}{2}, \frac{\strongConvAFW}{\Cf} \right\} \quad \text{\parbox{0.3\linewidth}{for the MNP algorithm, or\\ FCFW with exact correction.}} 
\end{align*}

Moreover, the number of drop steps up to iteration $t$ is bounded by $t/2$.
This yields the global linear convergence rate of $h_t \leq h_0 \exp(-\frac12
\rho_f t)$ for the AFW and MNP variants. FCFW does not need the extra $1/2$
factor as it does not have any bad step. Finally, the PFW algorithm has at
most $3 |\Vertices| !$ swap steps between any two `good steps'.

If $\strongConvAFW = 0$ (i.e. the case of general convex objectives), then all the four variants have
a $O(1/k(t))$ convergence rate where $k(t)$ is the number of `good steps' up to 
iteration $t$. More specifically, we can summarize the suboptimality bounds
for the four variants as:
$$
h_t \leq \frac{4 C}{k(t) + 4}   \quad \text{for $k(t) \geq 1$},
$$
where $C = 2 \strongConvAFW + h_0$ for AFW; $C = 2 \Cf + h_0$ for 
FCFW with approximate correction; $C = \Cf/2$ for MNP; and $C=\CfAFW/2$
for PFW. The number of good steps is $k(t) = t$ for FCFW; it is $k(t) \geq t/2$ for
MNP and AFW; and $k(t) \geq t/(3|\Vertices|!+1)$ for PFW.
\end{theorem}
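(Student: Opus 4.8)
The plan is to assemble the convergence result from three ingredients that are already in place: (i) the per-good-step progress bound, which is the combination of the descent inequality~\eqref{eq:descentLemma} with the geometric strong convexity constant $\strongConvAFW$; (ii) the geometric lower bound of Theorem~\ref{thm:muFdirWinterpretation} (equivalently~\eqref{eq:PWidthIsBound2} / Theorem~\ref{thm:muFdirWinterpretation2}), which turns the angle ratio $\innerProdCompressed{\r_t}{\dd_t}/\innerProdCompressed{\r_t}{\hat\err_t}$ into a constant; and (iii) a combinatorial count of how many of the $t$ iterations are ``good steps'' versus drop/swap steps. The per-step bound is proved exactly as in the intuition section: for a good step we have $\stepsize_t<\stepmax$, so the line-search step-size is the unconstrained optimum $\gamma_t^\ast = \innerProdCompressed{\r_t}{\dd_t}/(\CfAFW\|\dd_t\|^2 / \|\dd_t\|^2)$ — more precisely one plugs $\gamma_t^\ast$ into the $\CfAFW$-version of the descent lemma to get $h_t - h_{t+1}\ge \innerProdCompressed{\r_t}{\dd_t}^2/(2\CfAFW)$ (using the curvature constant rather than $L\cdot\diam^2$ so the argument is affine invariant and valid even when $\dd_t$ leaves $\domain$). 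Then one uses $2\innerProdCompressed{\r_t}{\dd_t}\ge\innerProdCompressed{\r_t}{\dd_t^\PFW}$ from~\eqref{eq:AFWgapInequality} together with the definition~\eqref{eq:muf} of $\strongConvAFW$ (which gives $h_t\le \innerProdCompressed{\r_t}{\dd_t^\PFW}^2/(2\strongConvAFW)$ after one applies the angle bound to the $\x^\ast-\x$ side) to conclude $h_{t+1}\le(1-\strongConvAFW/(4\CfAFW))h_t$; the factor $4$ is exactly the product of the two factors of $2$. For FCFW one replaces $\CfAFW$ by $\Cf$ because line~3 of Algorithm~\ref{alg:correction} guarantees at least the progress of an in-domain FW step. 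For PFW and MNP, when $\strongConvAFW/\CfAFW$ (resp.\ $\Cf$) exceeds $1/2$ one instead notes that a full step $\gamma_t = \stepmax \ge$ the unconstrained optimum already gives $h_{t+1}\le h_t/2$, hence the $\min\{1/2,\cdot\}$.

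The second block is the accounting. For AFW and MNP: every away step that is a drop step reduces $|S^{(t)}|$ by one, and a non-drop step increases $|S^{(t)}|$ by at most one (a FW step can add at most one atom); since $|S^{(t)}|\ge 1$ always and $|S^{(0)}|=1$, over $t$ iterations the number of drop steps is at most the number of non-drop steps, so at least $t/2$ steps are good, i.e.\ $k(t)\ge t/2$. For FCFW there are no drop steps at all (the correction subroutine performs FW and away corrections simultaneously while Algorithm~\ref{alg:correction} line~3 still forces FW-level progress), so $k(t)=t$. For PFW one additionally has swap steps, where $\gamma_t=\stepmax=\alpha_{\vv_t}$ but $|S^{(t+1)}|=|S^{(t)}|$ because the FW atom $\s_t$ was already active; the only bound I would claim is the crude one: between two good steps the active set is a fixed subset of $\Vertices$, and the sequence of away/FW atoms visited by swap steps cannot repeat a configuration (each swap step moves all of $\alpha_{\vv_t}$ onto $\s_t$, monotonically changing which atoms are ``full''), giving at most $3|\Vertices|!$ swap steps per good step, hence $k(t)\ge t/(3|\Vertices|!+1)$. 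Then $h_t\le h_0\prod_{\text{good }s\le t}(1-\rho_f)\le h_0(1-\rho_f)^{k(t)}\le h_0\exp(-\rho_f k(t))$, and substituting the above $k(t)$ bounds gives the claimed rates (with the extra $\tfrac12$ absorbed into $k(t)\ge t/2$ for AFW/MNP).

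For the non-strongly-convex case $\strongConvAFW=0$, the plan is to keep only inequality~\eqref{eq:lowerProgress} (via $\Cf$ or $\CfAFW$): on a good step $h_t-h_{t+1}\ge \innerProdCompressed{\r_t}{\hat\dd_t}^2/(2C')$ where $C'$ is the relevant curvature constant; combine with the FW gap bound $h_t\le g_t^\FW = \innerProdCompressed{\r_t}{\dd_t^\FW}\le 2\innerProdCompressed{\r_t}{\dd_t}$ to get a recursion of the form $h_t-h_{t+1}\ge c\,h_t^2$ (or, when $h_t$ is large, a constant decrease), and solve it by the standard induction $h_t\le \tfrac{4C}{k(t)+4}$, tracking the constant $C$ separately for each variant as stated; the initial-step bookkeeping accounts for the $+h_0$ terms in $C$ for AFW and FCFW. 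The genuinely hard part is the PFW swap-step count: bounding it requires showing that swap steps cannot cycle, and the $3|\Vertices|!$ bound is admittedly extravagant — this is the one place where a clean potential-function argument is missing, and I would present it as the weakest link, exactly as the theorem statement signals with its $t/(3|\Vertices|!+1)$ rate.
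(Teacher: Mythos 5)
Your overall architecture matches the paper's proof: the same per-good-step progress bound obtained by combining the curvature-constant version of the descent lemma with the pairwise-gap inequality $2\innerProdCompressed{\r_t}{\dd_t}\geq\innerProdCompressed{\r_t}{\dd_t^\PFW}$ and the definition of $\strongConvAFW$, the same drop-step accounting via $|\Coreset^{(t)}| = |\Coreset^{(0)}| + A_t - D_t \geq 1$ giving $D_t\leq t/2$, the same boundary case $\stepbound_t>1$ producing the $\min\{1/2,\cdot\}$ (dominated by $\strongConvAFW/(4\CfAFW)$ in the AFW case), and the same recursion for the sublinear regime. There is, however, one substantive gap: the MNP variant. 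You list it alongside PFW as attaining $\min\{1/2,\strongConvAFW/\Cf\}$, but you never justify the two facts that make that case work. First, the disappearance of the factor of $4$ is specific to MNP: after an exact correction the iterate minimizes $f$ over the affine hull of its active set, so $g_t^\away=0$ exactly and $\innerProdCompressed{\r_t}{\dd_t^\FW}=\innerProdCompressed{\r_t}{\dd_t^\PFW}$ with no factor of $2$; your argument only delivers the factor-of-two version. Second, and more importantly, the progress bound for MNP requires showing that the MNP correction --- a \emph{sequence of affine-hull minimizations followed by line searches}, not a minimization over $\conv(\VVertices^{(t)}\cup\{\s_t\})$ --- makes at least as much progress as a FW line search. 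This is the crux of the paper's MNP argument: if $\y_1\in\conv(\VVertices^{(0)})$ one is done; if exactly one atom is dropped, then $\y_2$ lies in the relative interior of the facet $\conv(\VVertices^{(1)})$, and first-order optimality forces $\y_2$ to minimize $f$ over all of $\conv(\VVertices^{(0)})$, hence at least FW-level progress; if two or more atoms are dropped the iteration is reclassified as a drop step and charged to the $t/2$ budget. Without this case analysis the MNP rate is unproved --- the affine minimizer can lie outside the convex hull, so ``the correction makes at least FW progress'' is not automatic the way it is for FCFW, where line~3 of Algorithm~\ref{alg:correction} imposes it by definition.

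A minor but real imprecision: your swap-step count asserts that between two good steps ``the active set is a fixed subset of $\Vertices$,'' which is false --- a swap step replaces $\vv_t$ by $\s_t$ in the active set, and drop steps shrink it. The invariant that actually supports the count is that a swap step preserves the \emph{multiset of nonzero weight values} $\{\alpha_{\vv}\}$ and only permutes their assignment to atoms; there are at most $|\Vertices|!/(|\Vertices|-r)!$ such assignments for an active set of size $r$, no configuration can recur because line search strictly decreases $f$, and summing over the values of $r$ visited via full drop steps gives $\sum_{l=1}^{r}|\Vertices|!/(|\Vertices|-l)!\leq e\,|\Vertices|!\leq 3|\Vertices|!$. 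Your conclusion is the right one and you correctly flag this as the weakest link, but the invariant as you state it would not yield the bound.
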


\begin{proof}
\textbf{Proof for AFW.} 
The general idea of the proof is to use the definition of the geometric
strong convexity constant to upper bound $h_t$, while using the definition of
the curvature constant $\CfAFW$ to lower bound the decrease in primal
suboptimality $h_t - h_{t+1}$ for the `good steps' of
Algorithm~\ref{alg:AFW}. Then we upper bound the number of `bad steps' (the
drop steps).

\emph{Upper bounding $h_t$.} In the whole proof, we assume that $\x^{(t)}$ is
not already optimal, i.e. that $h_t > 0$. If $h_t = 0$, then because
line-search is used, we will have $h_{t+1} \leq h_t = 0$ and so the geometric
rate of decrease is trivially true in this case.
Let $\x^*$ be an optimum point (which is not
necessarily unique). As $h_t > 0$, we have that $\innerProd{-\nabla
f(\x^{(t)})}{\x^*-\x^{(t)}} > 0$. We can thus apply the geometric strong
convexity bound~\eqref{eq:muf} at the current iterate $\x:=\x^{(t)}$ using
$\x^*$ as an optimum reference point to get (with $\overline{\stepsize} :=
\stepsize^\away(\x^{(t)}, \x^*)$ as defined in \eqref{eq:gammaAway}):
\begin{align}
\frac{{\overline{\stepsize}}^2}{2} \strongConvAFW  &\leq
f(\x^*)-f(\x^{(t)}) +\left\langle -\nabla f(\x^{(t)}) , \x^*-\x^{(t)} \right\rangle  \label{eq:ht_bound_trick} \\
&= -h_t + \overline{\stepsize} \left\langle - \nabla f(\x^{(t)}), \s_f(\x^{(t)}) - \vv_f(\x^{(t)})\right\rangle \nonumber\\
&\le -h_t + \overline{\stepsize} \left\langle - \nabla f(\x^{(t)}), \s_t - \vv_t \right\rangle  \nonumber\\
&=  -h_t + \overline{\stepsize}  g_t \ , \nonumber
\end{align}
where we define $g_t :=  \left\langle  -\nabla f(\x^{(t)}), \s_t - \vv_t \right\rangle$ (note that $h_t \leq g_t$ and so $g_t$ also gives a primal suboptimality certificate). %
For the third line, we have used the definition of $\vv_f(\x)$ which implies $\left\langle  \nabla f(\x^{(t)}), \vv_f(\x^{(t)})\right\rangle \leq \left\langle  \nabla f(\x^{(t)}),\vv_t \right\rangle$. %
Therefore $h_t \le -\frac{{\overline{\stepsize}}^2}{2} \strongConvAFW + \overline{\stepsize} g_t$,
which is always upper bounded\footnote{%
Here we have used the trivial inequality $0 \le %
a^2-2ab+b^2$ for the choice of numbers $a:=\frac{g_t}{\strongConvAFW}$ and $b:=\overline{\stepsize}$.
An alternative way to obtain the bound is to look at the unconstrained maximum of the RHS which is 
a concave function of $\overline{\stepsize}$ by letting $\overline{\stepsize} = g_t / \strongConvAFW$,
as we did in the main paper to obtain the upper bound on $h_t$ in~\eqref{eq:linearProgressBound}.
} %
by\vspace{-2mm}
\begin{equation} \label{eq:hUpperBound}
h_t \leq  \frac{{g_t}^2}{2\strongConvAFW}.
\end{equation}

\emph{Lower bounding progress $h_t-h_{t+1}$.} We here use
the key aspect in the proof that we had described in the main text with~\eqref{eq:AFWgapInequality}.
Because of the way the direction $\dd_t$ is chosen in the AFW
Algorithm~\ref{alg:AFW}, we have 
\begin{equation} \label{eq:gapDirection}
	\left\langle  -\nabla f(\x^{(t)}), \dd_t \right\rangle \geq g_t/2 , %
\end{equation}
and thus $g_t$ characterizes the quality of the direction $\dd_t$. To see
this, note that $2 \left\langle  \nabla f(\x^{(t)}), \dd_t \right\rangle \leq
 \left\langle \nabla f(\x^{(t)}), \dd_t^\FW\right\rangle  + \left\langle
\nabla f(\x^{(t)}), \dd_t^\away\right\rangle = \left\langle \nabla
f(\x^{(t)}), \dd_t^\FW+\dd_t^\away\right\rangle = -g_t$.

We first consider the case $\stepsize_\textrm{max} \geq 1$. Let $\x_\stepsize
:=  \x^{(t)} + \stepsize \dd_t$ be the point obtained by moving with
step-size $\stepsize$ in direction $\dd_t$, where $\dd_t$ is the one chosen
by Algorithm~\ref{alg:AFW}. By using $\s := \x^{(t)} + \dd_t$ (a feasible
point as $\stepsize_\textrm{max} \geq 1$)%
, $\x := \x^{(t)}$ and $\y := \x_\stepsize$ in the definition of the
curvature constant~$\Cf$~\eqref{eq:Cf}, and solving for $f(\x_\stepsize)$, we
get the affine invariant version of the descent lemma~\eqref{eq:descentLemma}:
\begin{equation} \label{eq:descentLemmaCf}
f(\x_\stepsize) \leq f(\x^{(t)}) + \stepsize \left\langle  \nabla
f(\x^{(t)}), \dd_t \right\rangle + \frac{\stepsize^2}{2} \Cf, \quad \text{valid $\forall
\stepsize \in [0,1]$}.
\end{equation}
As~$\stepsize_t$ is obtained by line-search and that
$[0,1] \subseteq [0,\stepsize_\textrm{max}]$, we also have that
$f(\x^{(t+1)}) = f(\x_{\stepsize_t}) \leq  f(\x_\stepsize)$ $\forall
\stepsize \in [0,1]$. Combining these two inequalities, subtracting $f(\x^*)$
on both sides, and using $\Cf \leq \CfAFW$ to simplify the possibilities
yields $h_{t+1} \le h_t + \stepsize \left\langle  \nabla f(\x^{(t)}), \dd_t
\right\rangle + \frac{\stepsize^2}{2} \CfAFW$.

Using the crucial gap inequality~\eqref{eq:gapDirection}, we get $h_{t+1} \le
h_t - \stepsize \frac{g_t}{2} + \frac{\stepsize^2}{2} \CfAFW$, and so:
\begin{equation} \label{eq:hProgress}
h_t - h_{t+1} \ge \stepsize \frac{g_t}{2} - \frac{\stepsize^2}{2} \CfAFW
\quad \forall \stepsize \in [0,1].  
\end{equation}
We can minimize the bound~\eqref{eq:hProgress} on the right hand side by
letting $\stepsize = \stepbound_t := \frac{g_t}{2\CfAFW}$. Supposing that
$\stepbound_t \leq 1$, we then get $h_t - h_{t+1} \geq 
\frac{g_t^2}{8\CfAFW}$ (we cover the case $\stepbound_t > 1$ later).  By
combining this inequality with the one from geometric strong
convexity~\eqref{eq:hUpperBound}, we get 
\begin{equation} \label{eq:mainGeometric}
h_t - h_{t+1} \ge \frac{\strongConvAFW}{4\CfAFW} \, h_t
\end{equation}
implying that we have a geometric rate of decrease $h_{t+1} \leq
\Big(1-\frac{\strongConvAFW}{4\CfAFW}\Big) h_t$ (this is a `good step').

\emph{Boundary cases.} We now consider the case $\stepbound_t > 1$ (with
$\stepsize_\textrm{max} \geq 1$ still). The condition $\stepbound_t > 1$ then
translates to $g_t \geq 2 \CfAFW$, which we can use in~\eqref{eq:hProgress}
with $\stepsize = 1$ to get $h_t - h_{t+1} \geq \frac{g_t}{2} - \frac{g_t}{4}
= \frac{g_t}{4}$. Combining this inequality with $h_t \leq g_t$ gives the
geometric decrease $h_{t+1} \leq \left(1-\frac{1}{4}\right) h_t$ (also a
`good step'). $\rho_f^\away$ is obtained by considering the worst-case of the
constants obtained from $\stepbound_t > 1$ and $\stepbound_t \leq 1$. (Note
that $\strongConvAFW \leq \CfAFW$ by Remark~\ref{rem:mfAFWsmallerThanCf},
and thus $\frac{1}{4} \geq \frac{\strongConvAFW}{4\CfAFW}$).

Finally, we are left with the case that $\stepsize_\textrm{max} < 1$. This is
thus an away step and so $\dd_t = \dd_t^\away = \x^{(t)} - \vv_t$. Here, we
use the away version $\CfAFW$: by letting $\s := \x^{(t)}$, $\vv = \vv_t$ and $\y := \x_\stepsize$ in~\eqref{eq:CfAFW}, we also
get the bound $f(\x_\stepsize) \leq f(\x^{(t)}) + \stepsize \left\langle 
\nabla f(\x^{(t)}), \dd_t \right\rangle + \frac{\stepsize^2}{2} \CfAFW$,
valid $\forall \stepsize \in [0,1]$ (but note here that the points
$\x_\stepsize$ are not feasible for $\stepsize > \stepsize_\textrm{max}$ --
the bound considers some points outside of $\domain$). We now have two
options: either $\stepsize_t = \stepsize_\textrm{max}$ (a drop step) or
$\stepsize_t < \stepsize_\textrm{max}$. In the case $\stepsize_t <
\stepsize_\textrm{max}$ (the line-search yields a solution in the interior of
$[0,\stepsize_\textrm{max}]$), then because $f(\x_\stepsize)$ is convex in
$\stepsize$, we know that $\min_{\stepsize \in [0,\stepsize_\textrm{max}]}
f(\x_\stepsize) = \min_{\stepsize \geq 0} f(\x_\stepsize)$ and thus
$\min_{\stepsize \in [0,\stepsize_\textrm{max}]} f(\x_\stepsize) =
f(\x^{(t+1)}) \leq f(\x_\stepsize)$ $\forall \stepsize \in [0,1]$. We can
then re-use the same argument above equation~\eqref{eq:hProgress} to get the
inequality~\eqref{eq:hProgress}, and again considering both the case
$\stepbound_t \leq 1$ (which yields inequality~\eqref{eq:mainGeometric}) and
the case $\stepbound_t > 1$ (which yields $(1-\frac{1}{4})$ as the geometric
rate constant), we get a `good step' with $1-\rho_f$ as the worst-case
geometric rate constant.

Finally, we can easily bound the number of drop steps possible up to
iteration $t$ with the following argument (the drop steps are the `bad steps'
for which we cannot show good progress). Let $A_t$ be the number of steps
that added a vertex in the expansion (only standard FW steps can do this) and
let $D_t$ be the number of drop steps. We have that $|\Coreset^{(t)}| =
|\Coreset^{(0)}| + A_t - D_t$. Moreover, we have that $A_t+D_t \leq t$. We
thus have $1 \leq |\Coreset^{(t)}| \leq |\Coreset^{(0)}| + t - 2D_t$,
implying that $D_t \leq \frac{1}{2}( |\Coreset^{(0)}|-1+t)=\frac{t}{2}$, as stated in the theorem.

\paragraph{Proof for FCFW.}
In the case of FCFW, we do not need to consider away steps: by the quality of the 
approximate correction in Algorithm~\ref{alg:correction} (as specified in Line~4), we know that at the
beginning of a new iteration, the away gap $g_t^\away \leq \epsilon$.
Supposing that the algorithm does not exit at line~6 of
Algorithm~\ref{alg:FCFW}, then $g_t^\FW > \epsilon$ and thus we have that $2
\innerProdCompressed{\r_t}{\dd_t^\FW} \geq
\innerProdCompressed{\r_t}{\dd_t^\PFW}$ using a similar argument as
in~\eqref{eq:AFWgapInequality} (i.e. if one would be to run the AFW algorithm at this point, it
would take a FW step). Finally, by property of the line~3 of the approximate correction
Algorithm~\ref{alg:correction}, the correction is
guaranteed to make at least as much progress as a line-search in direction
$\dd_t^\FW$, and so the lower bound~\eqref{eq:hProgress} can be
used for FCFW as well (but using $\Cf$ as the constant instead of $\CfAFW$ given that it was a FW step).

\paragraph{Proof for MNP.}
After a correction step in the MNP algorithm, we have that the current iterate is the minimizer over the active set, and thus $g_t^\away = 0$. We thus have $\innerProdCompressed{\r_t}{\dd_t^\FW} = \innerProdCompressed{\r_t}{\dd_t^\PFW} = g_t$, which means that a standard FW step would yield a geometric decrease of error.\footnote{Moreover, as we do not have the factor of $2$ relating $\innerProdCompressed{\r_t}{\dd_t^\FW}$ and $g_t$ unlike in the AFW and approximate FCFW case,
we can remove the factor of~$\frac{1}{2}$ in front of~$g_t$ in~\eqref{eq:hProgress}, removing
the factor of~$\frac{1}{4}$ appearing in~\eqref{eq:mainGeometric},
and also giving a geometric decrease with factor $(1-\frac{1}{2})$ when $\stepbound_t > 1$.}
It thus remains to show that the MNP-correction is making as much progress as a FW line-search. Consider $\y_1$ as defined in Algorithm~\ref{alg:MNP}. If it belongs to $\conv(\VVertices^{(0)})$, then it has made more progress than a FW line-search as $\s_t$ and $\x^{(t)}$ belongs to $\conv(\VVertices^{(0)})$. 

The next possibility is the crucial step in the proof: suppose that exactly one atom was removed from the correction polytope and that $\y_1$ does not belong to $\conv(\VVertices^{(0)})$ (as this was covered in the above case). This means that $\y_2$ belongs to \emph{the relative interior} of $\conv(\VVertices^{(1)})$. Because $\y_2$ is by definition the affine minimizer of $f$ on $\conv(\VVertices^{(1)})$, the negative gradient $-\nabla f(\y_2)$ is pointing away to the polytope $\conv(\VVertices^{(1)})$ (by the optimality condition). But $\conv(\VVertices^{(1)})$ is a \emph{facet} of $\conv(\VVertices^{(0)})$, this means that $-\nabla f(\y_2)$ determines a facet of $\conv(\VVertices^{(0)})$ (i.e. $\innerProdCompressed{-\nabla f(\y_2)}{\y - \y_2} \leq 0$ for all $\y \in \conv(\VVertices^{(0)})$). This means that $\y_2$ is also the minimizer of $f$ on $\conv(\VVertices^{(0)})$ and thus has made more progress than a FW line-search.

In the case that two atoms are removed from $\conv(\VVertices^{(0)})$, we cannot make this argument anymore
(it is possible that $\y_3$ makes less progress than a FW line-search); but in this case, the size of the active set is reduced by one (we have a drop step), and thus we can use the same argument as in the AFW algorithm to bound the number of such steps.

\paragraph{Proof for PFW.}
In this case, $\innerProdCompressed{\r_t}{\dd_t} = \innerProdCompressed{\r_t}{\dd_t^\PFW}$, so we do not even need a factor of 2 to relate the gaps (with the same consequence as in MNP in getting slightly bigger constants). We can re-use the same argument as in the AFW algorithm to get a geometric progress when $\stepsize_t < \stepmax$. When $\stepsize_t = \stepmax$ we can either have a drop step if $\s_t$ was already in $\Coreset^{(t)}$, or a swap step if $\s_t$ was also added to $\Coreset^{(t)}$ and so $|\Coreset^{(t+1)}| = |\Coreset^{(t)}|$. The number of drop steps can be bounded similarly as in the AFW algorithm. On the other hand, in the worst case, there could be a very large number of swap steps. We provide here a very loose bound, though it would be interesting to use other properties of the objective to prove that this worst case scenario cannot happen.

We thus bound the maximum number of swap steps between two `good steps' (very loosely). Let $m = |\Vertices|$ be the number of possible atoms, and let $r$ be the size of the current active set $|\Coreset^{(t)}| = r \leq m$. When doing a drop step $\stepsize_t = \alpha_{\vv_t}$, there are two possibilities: either we move all the mass from $\vv_t$ to a new atom $\s_t \notin \Coreset^{(t)}$ i.e. $\alpha^{(t+1)}_{\vv_t} = 0$ and $\alpha^{(t+1)}_{\s_t} = \alpha^{(t)}_{\vv_t}$ (a swap step); or we move all the mass from $\vv_t$ to an old atom $\s_t \in \Coreset^{(t)}$ i.e. $\alpha^{(t+1)}_{\s_t} = \alpha^{(t)}_{\s_t}+ \alpha^{(t)}_{\vv_t}$ (a `full drop step'). When doing a swap step, the set of 
\emph{possible values} for the coordinates $\alpha_{\vv}$ \emph{do not change}, they are only `swapped around' amongst the $m$ possible slots. The maximum number of possible consecutive swap steps without revisiting an iterate already seen is thus bounded by the number of ways we can assign $r$ numbers in $m$ slots (supposing the $r$ coordinates were all distinct in the worst case), which is $m! / (m-r)!$. Note that because the algorithm does a line-search in a strict descent direction at each iteration, we always have $f(\x^{(t+1)}) < f(\x^{(t)})$ unless $\x^{(t)}$ is already optimal. This means that the algorithm cannot revisit the same point unless it has converged. When doing a `full drop step', the set of coordinates changes, but the size of the active set is reduced by one (thus $r$ reduced by one). In the worst case, we will do a maximum number of swap steps, followed by a full drop step, repeated so on all the way until we reach an active set of only one element (in which case there is a maximum number of $m$ swap steps). Starting with an active set of $r$ coordinates, the maximum number of swap steps $B$ without doing any `good step' (which would also change the set of coordinates), is thus upper bounded by:
\begin{equation*}
B \leq \sum_{l = 1}^{r} \frac{m!}{(m-l)!} \leq m! \sum_{l=0}^{\infty} \frac{1}{l!} = m! \, e \leq 3 m! \,\, , 
\end{equation*}
as claimed.

\paragraph{Proof of Sublinear Convergence for General Convex Objectives (i.e. when $\strongConvAFW=0$).} 
For the good steps 
of the MNP algorithm and the pairwise FW algorithm, we have the reduction of suboptimality
given by~\eqref{eq:hProgress} without the factor of $\frac{1}{2}$ in front of $g_t \geq h_t$.
This is the standard recurrence that appears in the convergence proof of Frank-Wolfe
(see for example Equation~(4) in~\citep[proof of Theorem~1]{Jaggi:2013wg}), yielding
the usual convergence:
\begin{equation} \label{eq:sublinearMNP}
h_t \leq \frac{2 C}{k(t) + 2}  \quad \text{ for $k(t) \geq 1$},
\end{equation} 
where $k(t)$ is the number of good steps up to iteration $t$, and $C = \Cf$ for MNP and $C = \CfAFW$ for PFW.
The number of good steps for MNP is $k(t) \geq t/2$, while for PFW, we have the (useless) lower bound
$k(t) \geq t/(3|\Vertices|!+1)$. For FCFW with exact correction, the rate~\eqref{eq:sublinearMNP} was 
already proven in~\citep{Jaggi:2013wg} with $k(t) = t$. On the other hand, for FCFW with approximate correction, and for AFW, the factor of $\frac{1}{2}$ in front of the gap~$g_t$ in the suboptimality bound~\eqref{eq:hProgress}
somewhat complicates the convergence proof. The recurrence we get for the
suboptimality is the same as in Equation~(20) of~\citep[proof of Theorem~C.1]{LacosteJulien:2013ue},
with $\nu = \frac{1}{2}$ and $n=1$, giving the following suboptimality bound:
\begin{equation} \label{eq:sublinearAFW}
h_t \leq \frac{4 C}{k(t) + 4}  \quad \text{ for $k(t) \geq 0$},
\end{equation} 
where $C = 2\CfAFW + h_0$ for AFW and $C = 2\Cf + h_0$ for FCFW with approximate correction.
Moreover, the number of good steps is $k(t) \geq t/2$ for AFW, and $k(t) = t$ for FCFW.
A weaker (logarithmic) dependence on the initial error~$h_0$ can also be obtained by following a tighter
analysis (see~\citep[Theorem~C.4]{LacosteJulien:2013ue} or~\citep[Lemma~D.5 and Theorem~D.6]{Krishnan:2015ws}),
though we only state the simpler result here.
\end{proof}

\section{Empirical Tightness of Linear Rate Constant} \label{app:triangle}

We describe here a simple experiment to test how tight the 
constant in the linear convergence rate of Theorem~\ref{thm:megaConvergenceTheorem2}
is. We test both AFW and PFW on the triangle domain
with corners at the locations
$(-1,0)$, $(0,0)$ and $(\cos(\theta),\sin(\theta))$, for
increasingly small $\theta$ (see Figure~\ref{fig:triangle}).

\begin{figure}[t]
\begin{center}
\includegraphics[width=0.7\linewidth]{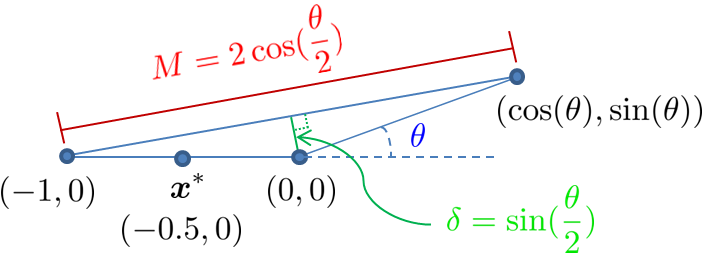}
\end{center}
\caption{Simple triangle domain to test the empirical
tightness of the constant in the convergence rate for AFW and PFW.
The width $\delta$ varies with $\theta$. We optimize $f(\x) := \frac{1}{2} \|\x-\x^*\|^2$
over this domain.} \label{fig:triangle}
\end{figure}

The pyramidal width $\delta=\sin(\frac{\theta}{2})$
becomes vanishingly small as $\theta \to 0$; the
diameter is $M=2\cos(\frac{\theta}{2})$.
We consider the optimization of the 
function $f(\x) := \frac{1}{2} \|\x-\x^*\|^2$
with $\x^*=(-0.5,1)$ on one edge of the domain.
Note that the condition number of $f$ is $\frac{L}{\mu}=1$.
The bound on the linear convergence rate $\rho$ according
to Theorem~\ref{thm:megaConvergenceTheorem2} 
(using $\CfAFW \leq L M^2$~\eqref{eq:CfBound} and $\strongConvAFW \geq \mu\, \delta^2$~\eqref{eq:muDirWidthBound})
is $\rho^\PFW = \frac{\strongConvAFW}{\CfAFW} \geq \frac{\mu}{L}\left(\frac{\delta}{M}\right)^2$
for PFW and $\rho^\away = \frac{1}{4} \rho^\PFW $ for AFW.
The theoretical constant here is thus $\rho^\PFW = \frac{1}{4} \tan^2 (\frac{\theta}{2})$.
We consider $\theta$ varying from $\pi/4$ to $1e\!-\!3$, and thus theoretical rates
varying on a wide range from $0.04$ to $1e\!-\!7$. We compare the theoretical rate $\rho$
with the empirically observed one by estimating $\hat{\rho}$ in the relationship
$h_t \approx h_0 \exp(-\rho t)$ (using linear regression on the semilogarithmic scale).
For each $\theta$, we run both AFW and PFW for 2000 iterations 
starting from 20 different random starting points\footnote{$\x_0$ is obtained by taking
a random convex combination of the corners of the domain.} 
in the interior of the triangle domain. We disregard the starting points
that yield a drop step (as then the algorithm converges in one iteration;
these happen for about $10\%$ of the starting points). Note
that as there is no drop step in our setup, we do not
need to divide by two the effective rate as is done in 
Theorem~\ref{thm:megaConvergenceTheorem2} (the number of `good steps' is $k(t) = t$).

\begin{figure}[t]
\begin{center}
\begin{minipage}{0.46\linewidth}
\centering
\includegraphics[width=\linewidth]{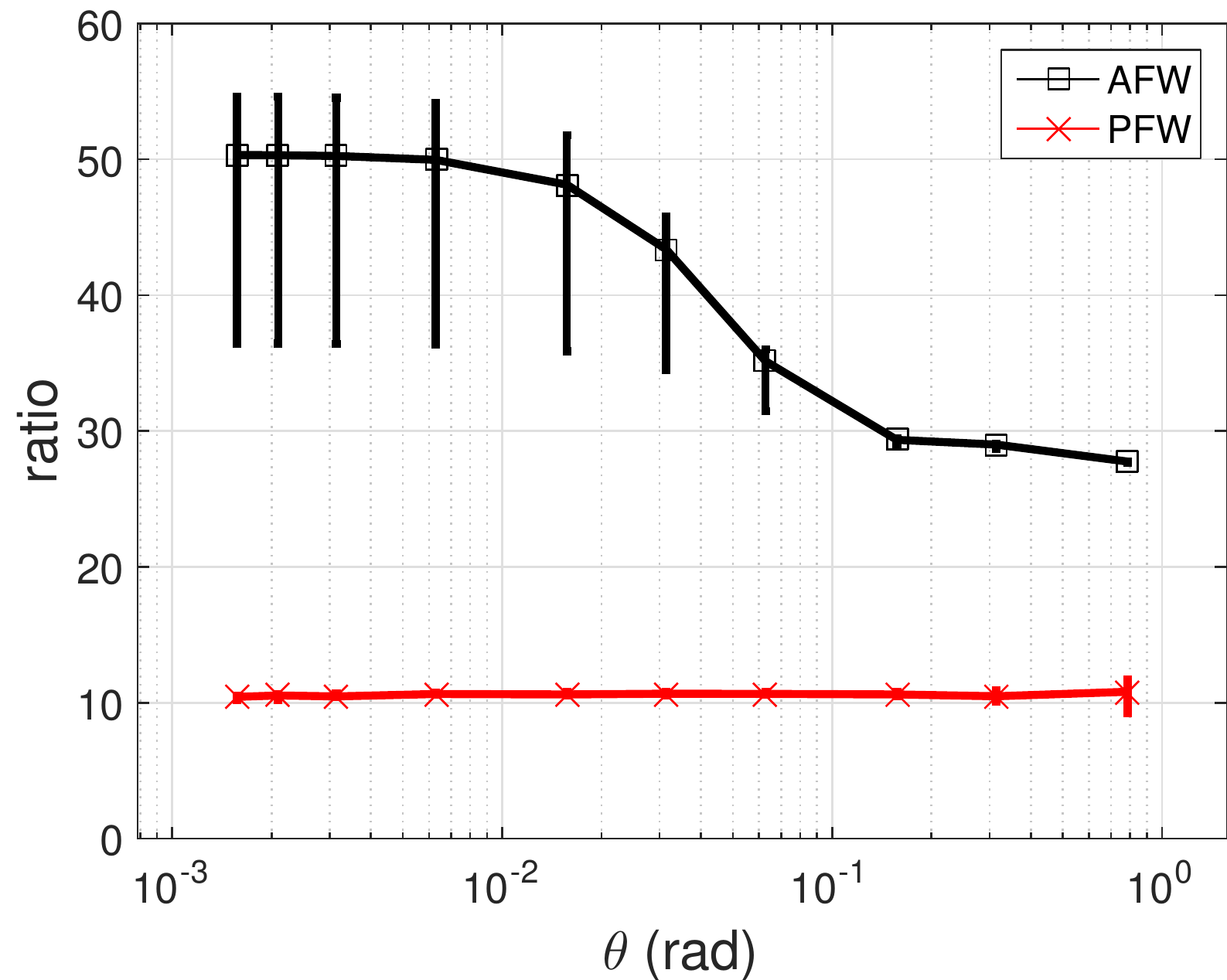}
(a)  Ratio of empirical rate vs. theoretical one
\end{minipage}	
\hfill
\begin{minipage}{0.48\linewidth}
\centering
\includegraphics[width=0.9\linewidth]{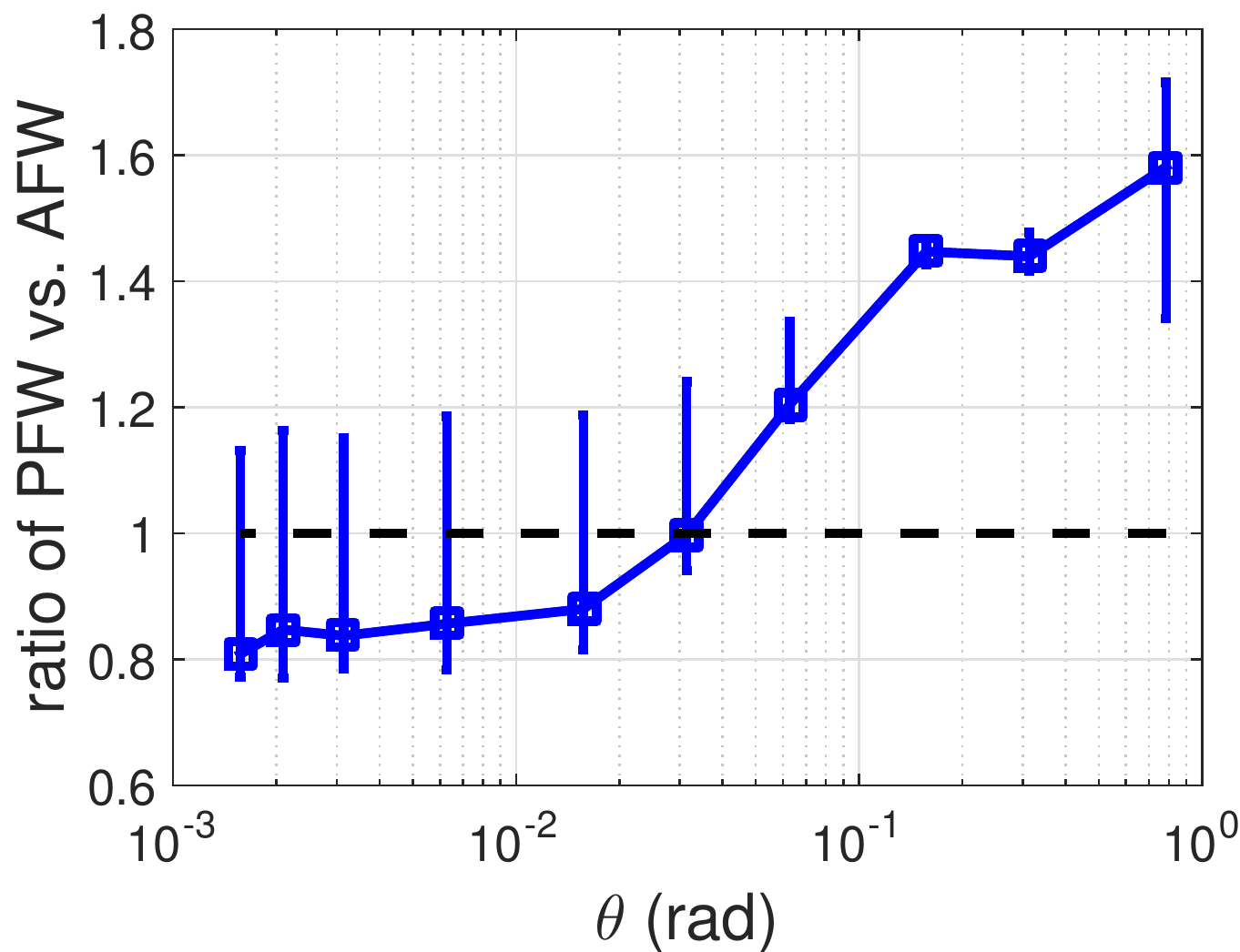}
\\[3mm]
(b) Ratio of empirical rate of PFW vs. AFW
\end{minipage}
\end{center}
\caption{Empirical rate results for the triangle domain of Figure~\ref{fig:triangle}.
We plot median values over 20 random starting points; the
error bars represent the $25\%$ and $75\%$ quantiles.
The empirical rate for PFW is closely following the theoretical one.} \label{fig:triangleResults}
\end{figure}

Figure~\ref{fig:triangleResults} presents the results.
In Figure~\ref{fig:triangleResults}(a), we
plot the ratio of the estimated rate over
the theoretical rate $\frac{\hat{\rho}}{\rho}$ for both PFW 
and AFW as $\theta$ varies. Note that the ratio
is very stable for PFW (around 10), despite the rate
changing through six orders of magnitude, demonstrating
the empirical tightness of the constant for this domain. 
The ratio for AFW has more fluctuations, but also stays
within a stable range. We can also 
do a finer analysis than the pyramidal width and
consider the finite number possibilities for the worst case angles
 for $(\innerProdCompressed{\hat{\r}}{{\dd^\PFW(\r)}})^2$.
This gives the tighter constant $\rho^\PFW = \sin^2(\frac{\theta}{2})$
for our triangle domain, gaining a factor of about~4, but still
not matching yet the empirical observation for PFW.

In Figure~\ref{fig:triangleResults}(b), we compare the empirical
rate for PFW vs. the one for AFW. For bigger theoretical rates, PFW appears
to converge faster. However, AFW gets a slightly
better empirical rate for very small rates (small angles).

\section{Non-Strongly Convex Generalization} \label{app:NonStronglyConvex}

Here we will study the generalized setting with objective $f(\x) := g(\A \x) + \innerProdCompressed{\bv}{\x}$ where $g$ is $\mu_g$-\emph{strongly convex} w.r.t.\ the Euclidean norm over the domain $\A \domain$ with strong convexity constant $\mu_g > 0$. 

We first define a few constants: let $G := \max_{\x \in \domain} \| \nabla g
(\A \x) \|$ be the maximal norm of the gradient of $g$ over $\A \domain$; $M$
be the diameter of $\domain$ and $M_{\A}$ be the diameter of $\A \domain$.

Let $\theta$ be the Hoffman constant (see~\citep[Lemma 2.2]{Beck:2015vo})
associated with the matrix $[\A ; \bv^\top; \B] = \left(\substack{\A \\ \bv^\top \\ \B}\right)$, where the rows of $\B$ are the linear inequality constraints defining the set $\domain$.

We present here a generalization of Lemma 2.5 from~\citep{Beck:2015vo}:
\begin{lemma}\label{lem:generalizedStrongConvexity}
For any $\x \in \domain$ and $\x^*$ in the solution set $\X^*$:
\begin{equation}
f(\x^*)- f(\x)- 2 \langle\nabla f(\x), \x^*-\x \rangle \geq 2 \tilde{\mu} \, d(\x, \X^*)^2 ,
\end{equation}
where $\tilde{\mu} := 1/\left(2 \theta^2 \left( \|\bv \| M + 3 G M_{\A} + \frac{2}{\mu_g} (G^2+1) \right) \right)$ is the generalized strong convexity constant for $f$.
\end{lemma}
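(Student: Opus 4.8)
The plan is to follow the strategy of \citet[Lemma 2.5]{Beck:2015vo}, adapted to the composite form $f(\x) = g(\A\x) + \innerProd{\bv}{\x}$. Abbreviate $\Delta' := f(\x^*)-f(\x)-2\innerProd{\nabla f(\x)}{\x^*-\x}$ (the left-hand side) and $\Delta := f(\x^*)-f(\x)-\innerProd{\nabla f(\x)}{\x^*-\x} \ge 0$, so $\Delta' = \Delta + \innerProd{\nabla f(\x)}{\x-\x^*}$. First I would \emph{characterize the solution set}: since $g$ is strongly convex, a midpoint/strict-convexity argument on pairs of minimizers shows that $\A\x$ takes a common value on $\X^*$; consequently $g(\A\x)$, and hence (by constancy of $f$) also $\innerProd{\bv}{\x}$, are constant on $\X^*$, so $\X^* = \{\z \in \domain : \A\z = \A\x^*,\ \innerProd{\bv}{\z}=\innerProd{\bv}{\x^*}\}$ is a polyhedron for any fixed $\x^* \in \X^*$; it also follows that $\Delta'$ does not depend on which $\x^* \in \X^*$ is chosen. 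Applying Hoffman's bound for the matrix $[\A;\bv^\top;\B]$ (as in \citet[Lemma 2.2]{Beck:2015vo}) and noting that the residuals of the inequality constraints $\B\z \le c$ vanish because $\x \in \domain$, we get
\[
d(\x,\X^*)^2 \leq \theta^2 \big(\norm{\A\x-\A\x^*}^2 + \innerProd{\bv}{\x-\x^*}^2\big).
\]
It then suffices to bound each term on the right by a constant multiple of $\Delta'$.

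For the first term I would use only the $\mu_g$-strong convexity of $g$ on $\A\domain$: expanding $f(\x^*)=g(\A\x^*)+\innerProd{\bv}{\x^*}$ with the strong-convexity inequality for $g$ at $\A\x$ and using $\nabla f(\x)=\A^\top\nabla g(\A\x)+\bv$ gives $\Delta \ge \tfrac{\mu_g}{2}\norm{\A\x-\A\x^*}^2$, hence $\norm{\A\x-\A\x^*}^2 \le \tfrac{2}{\mu_g}\Delta'$ since $\Delta' \ge \Delta$.

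The crux is the second term, $\innerProd{\bv}{\x-\x^*}^2$, which is \emph{not} controlled by strong convexity alone (it probes the $\ker\A$ direction). I would write $\innerProd{\bv}{\x-\x^*} = \innerProd{\nabla f(\x)}{\x-\x^*} - \innerProd{\nabla g(\A\x)}{\A\x-\A\x^*}$ and exploit three elementary facts: (i) $0 \le \innerProd{\nabla f(\x)}{\x-\x^*} \le \Delta'$ --- nonnegativity is convexity plus optimality of $\x^*$, and the upper bound is precisely where the doubled gradient term in $\Delta'$ is needed, since $\Delta' = \Delta + \innerProd{\nabla f(\x)}{\x-\x^*}$ with $\Delta \ge 0$; (ii) $\innerProd{\nabla f(\x)}{\x-\x^*} \le G M_{\A}+\norm{\bv}M$ and $\abs{\innerProd{\nabla g(\A\x)}{\A\x-\A\x^*}} \le G M_{\A}$ by Cauchy--Schwarz and the diameters; (iii) $\innerProd{\nabla g(\A\x)}{\A\x-\A\x^*}^2 \le G^2\norm{\A\x-\A\x^*}^2 \le \tfrac{2G^2}{\mu_g}\Delta'$. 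Expanding $\innerProd{\bv}{\x-\x^*}^2 \le \big(\innerProd{\nabla f(\x)}{\x-\x^*}+\abs{\innerProd{\nabla g(\A\x)}{\A\x-\A\x^*}}\big)^2$ into its three terms and pairing each factor with the sharpest applicable bound from (i)--(iii) yields $\innerProd{\bv}{\x-\x^*}^2 \le \big(3GM_{\A}+\norm{\bv}M+\tfrac{2G^2}{\mu_g}\big)\Delta'$. Adding the two bounds gives $d(\x,\X^*)^2 \le \theta^2\big(\norm{\bv}M+3GM_{\A}+\tfrac{2}{\mu_g}(G^2+1)\big)\Delta' = \tfrac{1}{2\tilde\mu}\Delta'$, which is the claim.

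The main obstacle is this last step: strong convexity only pins down the component of $\x-\x^*$ visible through $\A$, so the complementary direction must be handled by combining convexity of $f$, the first-order optimality of $\x^*$, boundedness of $\domain$ and $\A\domain$, and --- crucially --- the factor $2$ on the gradient term in $\Delta'$, which is exactly what makes $\innerProd{\nabla f(\x)}{\x-\x^*}$ absorbable into the left-hand side. Matching the stated constant $\tilde\mu$ exactly is then just a matter of applying the right bound to the right factor in the expansion of the square; no further idea is required.
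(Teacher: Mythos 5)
Your proof is correct and follows essentially the same route as the paper's: Hoffman's bound for the matrix $[\A;\bv^\top;\B]$ reduces the claim to controlling $\norm{\A\x-\A\x^*}^2$ (via strong convexity of $g$) and $\innerProdCompressed{\bv}{\x-\x^*}^2$, and your constants match the stated $\tilde\mu$ exactly. The only difference is that where the paper invokes inequality (2.10) of Beck and Shtern as a black box to bound $\innerProdCompressed{\bv}{\x-\x^*}^2$ by $\big(\norm{\bv}M+3GM_{\A}+\tfrac{2}{\mu_g}G^2\big)$ times the gap, you re-derive that bound inline; the decomposition, the role of the doubled gradient term, and the final constant are identical.
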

\begin{proof}
Let $\x^*$ be any element of the solution set $\X^*$. By the strong convexity of $g$, we have
\begin{equation} \label{eq:strongConvForG}
f(\x^*)- f(\x)- \langle\nabla f(\x), \x^*-\x \rangle
\geq \textstyle\frac{\mu_g}{2} \norm{\A \x^* - \A \x}^2 \ .
\end{equation}

Moreover, by the convexity of $f$, we have:
\begin{equation} \label{eq:convexityForG}
- \langle\nabla f(\x), \x^*-\x \rangle
\geq f(\x) - f(\x^*) .
\end{equation}

We now use inequality (2.10) in~\citep{Beck:2015vo} to get the bound:
\begin{equation} \label{eq:lowerBoundWithB}
f(\x) - f(\x^*) \geq  \frac{1}{B_1} (\innerProdCompressed{\bv}{\x} - \innerProdCompressed{\bv}{\x^*} )^2 ,
\end{equation}
where $B_1 := ( \|\bv \| M + 3 G M_{\A} + \frac{2}{\mu_g} G^2)$.

Plugging~\eqref{eq:lowerBoundWithB} into~\eqref{eq:convexityForG} and adding to~\eqref{eq:strongConvForG}, we get:
\begin{align*} 
f(\x^*)- f(\x)- 2 \langle\nabla f(\x), \x^*-\x \rangle
&\geq \frac{1}{B_2} \left( \norm{\A \x^* - \A \x}^2  + (\innerProdCompressed{\bv}{\x} - \innerProdCompressed{\bv}{\x^*} )^2 \right) \\
&\geq \frac{1}{B_2 \theta^2} d(\x, \X^*)^2 ,
\end{align*}
where $B_2 := ( \|\bv \| M + 3 G M_{\A} + \frac{2}{\mu_g} (G^2+1))$. For the last inequality, we used 
inequality~(2.1) in~\citep{Beck:2015vo} that made use of
the Hoffman's Lemma (see~\citep[Lemma 2.2]{Beck:2015vo}),
where $\theta$ is the Hoffman constant associated with the matrix $[\A ; \bv^\top ; \B]$. In this case, $\B$ is the matrix with rows containing the linear inequality constraints defining $\domain$.
\end{proof}

We now define the following generalization of the geometric strong convexity constant \eqref{eq:muf}, that we now call $\strongConvGeneralized$:
\begin{equation}\label{eq:mufGeneralized}
  \strongConvGeneralized := \inf_{\x\in \domain} \sup_{\substack{\x^* \in \X^*\\
                        \textrm{s.t. } \left\langle \nabla f(\x), \x^*-\x \right\rangle < 0 }}
           \frac{1}{{2 \stepsize^\away(\x,\x^*)}^2}
           \big( f(\x^*)-f(\x)-2 \left\langle \nabla f(\x),  \x^*-\x \right\rangle \big) \ .
\end{equation}
Notice the new inner \emph{supremum} over the solution set $\X^*$ compared to the original definition \eqref{eq:muf}, the factor of $2$ in front of the gradient, and the different 
overall scaling to have a similar form as in the previous linear convergence theorem. 
This new quantity $\strongConvGeneralized$ is still \emph{affine invariant}, but
unfortunately now depends on the location of the solution set $\X^*$.
We now present the generalization of Theorem~\ref{thm:muFdirWinterpretation2}.

\begin{theorem}\label{thm:generalizedPWidth}
Let $f(\x) := g(\A \x) + \innerProdCompressed{\bv}{\x}$ where $g$ is $\mu_g$-\emph{strongly convex} w.r.t.\ the Euclidean norm over the domain $\A \domain$ with strong convexity constant $\mu_g > 0$. Let $\tilde{\mu}$ be the corresponding generalized strong convexity constant coming from Lemma~\ref{lem:generalizedStrongConvexity}.
Then
\[
\strongConvGeneralized \geq \tilde{\mu} \cdot \left( \PWidth(\domain) \right)^2 \ .
\] 
\end{theorem}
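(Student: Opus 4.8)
The plan is to mimic the proof of Theorem~\ref{thm:muFdirWinterpretation2}, replacing the role played there by the strong-convexity inequality \eqref{eq:strongConv} with the generalized inequality of Lemma~\ref{lem:generalizedStrongConvexity}. First I would take an arbitrary non-optimal point $\x \in \domain$ and the particular $\x^* \in \X^*$ that attains (or approximates) the inner supremum in the definition \eqref{eq:mufGeneralized} of $\strongConvGeneralized$; since $\x$ is not optimal, the constraint $\innerProd{\nabla f(\x)}{\x^*-\x} < 0$ can be met. Then, starting from the defining fraction
\[
\frac{1}{2\,\stepsize^\away(\x,\x^*)^2}\big( f(\x^*)-f(\x) - 2\innerProd{\nabla f(\x)}{\x^*-\x}\big),
\]
I would apply Lemma~\ref{lem:generalizedStrongConvexity} to lower bound the bracketed quantity by $2\tilde\mu\, d(\x,\X^*)^2$. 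Since $d(\x,\X^*) = \min_{\y \in \X^*}\norm{\y-\x}$ and $\x^*$ was an arbitrary element of $\X^*$, we have $d(\x,\X^*) \le \norm{\x^*-\x}$; but for the bound to go in the right direction we want to \emph{choose} $\x^*$ to be the \emph{projection} of $\x$ onto $\X^*$, so that $d(\x,\X^*) = \norm{\x^*-\x}$ exactly --- this is the one subtle point and it is fine because the supremum in \eqref{eq:mufGeneralized} is over $\X^*$ and we only need a valid lower bound (we may evaluate at this particular projection point provided it is a strict descent direction, which holds by the convexity of $f$ whenever $\x \notin \X^*$).

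Next I would substitute the definition \eqref{eq:gammaAway} of $\stepsize^\away(\x,\x^*) = \innerProd{\r_\x}{\x^*-\x}/\innerProd{\r_\x}{\s_f(\x)-\vv_f(\x)}$ (with $\r_\x := -\nabla f(\x)$), so that the whole expression becomes
\[
\tilde\mu\left(\frac{\innerProd{\r_\x}{\s_f(\x)-\vv_f(\x)}}{\innerProd{\r_\x}{\x^*-\x}}\right)^2 \norm{\x^*-\x}^2 = \tilde\mu\left(\frac{\innerProd{\r_\x}{\s_f(\x)-\vv_f(\x)}}{\innerProd{\r_\x}{\hat\err(\x^*,\x)}}\right)^2,
\]
exactly as in \eqref{eq:mufInitial}, where $\hat\err(\x^*,\x) := (\x^*-\x)/\norm{\x^*-\x}$. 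Then I would observe, just as in the proof of Theorem~\ref{thm:muFdirWinterpretation2}, that $\s_f(\x)-\vv_f(\x)$ is a valid pairwise FW direction for some active set $\S$ of $\x$ over the atom set $\Vertices$ (equivalently, the generating atoms of $\domain$), and invoke Theorem~\ref{thm:muFdirWinterpretation} --- specifically inequality \eqref{eq:PWidthIsBound2} --- to lower bound this ratio by $\PWidth(\domain)$ (writing $\PWidth(\domain)$ for $\PWidth(\Vertices)$ as in the statement). Taking the infimum over all non-optimal $\x \in \domain$ then yields $\strongConvGeneralized \ge \tilde\mu \cdot (\PWidth(\domain))^2$, as claimed.

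\textbf{Main obstacle.} The only real care needed is the handling of the inner supremum over $\X^*$ and the distance-to-solution-set term: unlike the strongly convex case where $\X^*$ is a singleton and $\norm{\x^*-\x}$ is unambiguous, here I must make sure that choosing $\x^*$ to be the projection of $\x$ onto $\X^*$ (so that $\norm{\x^*-\x} = d(\x,\X^*)$) is legitimate inside the supremum, and that this $\x^*$ still satisfies $\innerProd{\nabla f(\x)}{\x^*-\x} < 0$ whenever $\x$ is suboptimal --- the latter follows from convexity of $f$ together with $f(\x^*) < f(\x)$. A secondary routine point is that Theorem~\ref{thm:muFdirWinterpretation} is stated for the error direction toward an \emph{optimal} point $\x^*$ with $\innerProd{\r}{\hat\err} > 0$, which is precisely the situation here, so it applies verbatim. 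Everything else is bookkeeping that parallels the proof of Theorem~\ref{thm:muFdirWinterpretation2}.
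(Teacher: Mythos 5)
Your proposal is correct and follows essentially the same route as the paper's own proof: fix a non-optimal $\x$, evaluate the inner supremum in~\eqref{eq:mufGeneralized} at the projection of $\x$ onto $\X^*$ (so that $d(\x,\X^*)=\norm{\x^*-\x}$ and the descent condition holds by convexity), apply Lemma~\ref{lem:generalizedStrongConvexity}, substitute the definition of $\stepsize^\away$ to arrive at the ratio in~\eqref{eq:mufInitial} with $\tilde\mu$ in place of $\mu$, and conclude via Theorem~\ref{thm:muFdirWinterpretation}. The points you flag as subtle (legitimacy of picking the projection inside the supremum, and the strict-descent condition) are exactly the ones the paper handles, and your justifications are sound.
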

\begin{proof}
Let $\x$ be fixed and not optimal; let $\x^*$ be its closest point in $\X^*$ i.e. $\|\x - \x^*\| = d(\x, \X^*)$. We have that $\left\langle \nabla f(\x), \x^*-\x \right\rangle < 0$ as $\x$ is not optimal.

We use the generalized strong convexity notion $\tilde{\mu}$ from Lemma~\ref{lem:generalizedStrongConvexity} for the particular reference point~$\x^*$ in the third line below to get:
\begin{multline*}
\sup_{\substack{\x' \in \X^*\\
                        \textrm{s.t. } \left\langle \nabla f(\x), \x'-\x \right\rangle < 0 }}
           \frac{1}{{2\stepsize^\away(\x,\x')}^2}
           \big( f(\x')-f(\x)-2 \left\langle \nabla f(\x),  \x'-\x \right\rangle \big)  \\
           \geq  \frac{1}{{2\stepsize^\away(\x,\x^*)}^2}
                      \big( f(\x^*)-f(\x)-2 \left\langle \nabla f(\x),  \x^*-\x \right\rangle \big) \\ 
          \geq  \frac{1}{{2\stepsize^\away(\x,\x^*)}^2} 2 \tilde{\mu}\, d(\x, \X^*)^2
          = \frac{1}{{ \stepsize^\away(\x,\x^*)}^2} \tilde{\mu} \|\x-\x^*\|^2 .                  
\end{multline*}
We can do this for each non-optimal $\x$. We thus obtain:
\begin{equation} \label{eq:muTildeInequality}
\strongConvGeneralized \geq  \inf_{\substack{\x, \x^* \in \domain\\
                                   \textrm{s.t. } \innerProd{\r_{\x}}{\x^*-\x} > 0}}
                      \tilde{\mu} \left(  \frac{\innerProd{\r_{\x}}{ \s_f(\x) - \vv_f(\x)}}{\innerProd{\r_{\x}}{\x^*-\x}} \norm{{\x^*-\x}} \right)^2 .
\end{equation}
And we are back to the same situation as in the proof of our earlier Theorem~\ref{thm:muFdirWinterpretation2}, the only change being that we now have equation~\eqref{eq:mufInitial} holding for the general strong convexity constant $\tilde{\mu}$ instead of its classical analogue~$\mu$.
\end{proof}

Having this tool at hand, the linear convergence of all Frank-Wolfe algorithm variants now holds with the earlier $\strongConvAFW$ complexity constant replaced with~$\strongConvGeneralized$. The factor of 2 in the denominator of~\eqref{eq:mufGeneralized} is to ensure the same scaling.%

Again, as we have shown in Theorem \ref{thm:generalizedPWidth}, we have that our condition $\strongConvGeneralized > 0$ leading to linear convergence is slightly weaker than generalized strong convexity in the Hoffman sense (it is implied by it).

\begin{theorem}\label{thm:megaConvergenceTheoremHoffman}
Suppose that $f$ has smoothness constant $\CfAFW$ ($\Cf$ for FCFW and
MNP),
as well as generalized geometric strong convexity constant~$\strongConvGeneralized$ as defined
in~\eqref{eq:mufGeneralized}.

Then the suboptimality error~$h_t$ of the iterates of all the four variants of the FW algorithm (AFW, FCFW, MNP and PFW) decreases geometrically at each step that is not a drop step nor a swap step (i.e.
when $\stepsize_t < \stepmax$), with the same constants as in Theorem~\ref{thm:megaConvergenceTheorem2}, except that $\strongConvAFW$ is replaced by~$\strongConvGeneralized$.
\end{theorem}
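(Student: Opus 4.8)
The plan is to re-run the proof of Theorem~\ref{thm:megaConvergenceTheorem2}, noting that it invokes the geometric strong convexity constant in exactly one place: to derive the upper bound $h_t \le g_t^2/(2\strongConvAFW)$ on the current suboptimality in terms of the pairwise gap $g_t$, i.e.\ inequality~\eqref{eq:hUpperBound}. Everything else --- the affine descent estimate~\eqref{eq:hProgress} that gives $h_t - h_{t+1} \ge \frac{\stepsize g_t}{2} - \frac{\stepsize^2}{2}\CfAFW$ at a good step (and its variant-specific forms with $\Cf$, and without the $\frac12$ for PFW and MNP), the gap inequality~\eqref{eq:gapDirection}, the boundary-case analysis for $\stepbound_t > 1$ and $\stepmax < 1$, and the combinatorial counting of drop and swap steps --- depends only on the algorithm, the smoothness constant, and the active-set bookkeeping, and transfers verbatim. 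So it suffices to establish $h_t \le g_t^2/(2\strongConvGeneralized)$ for the iterates of each of the four variants; combining this with~\eqref{eq:hProgress} (minimized at $\stepsize = \stepbound_t$) exactly as in the proof of Theorem~\ref{thm:megaConvergenceTheorem2} then yields $h_{t+1} \le (1-\rho_f)h_t$ with $\rho_f$ the same as there but with $\strongConvGeneralized$ in place of $\strongConvAFW$. (One also re-checks the analogue of Remark~\ref{rem:mfAFWsmallerThanCf}, namely $\strongConvGeneralized \le \CfAFW$, so that the $\stepbound_t > 1$ case is never the binding one; this follows by the same smoothness argument applied at a vertex.)

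To prove $h_t \le g_t^2/(2\strongConvGeneralized)$, fix a non-optimal iterate $\x := \x^{(t)}$, so $h_t > 0$. Since $f(\x) = g(\A\x) + \innerProd{\bv}{\x}$ is convex, for \emph{every} $\x^* \in \X^*$ we have $\innerProd{\nabla f(\x)}{\x^* - \x} \le f(\x^*) - f(\x) = -h_t < 0$, so all of $\X^*$ is admissible in the inner supremum of~\eqref{eq:mufGeneralized}, and $f$ is constant on $\X^*$ so $f(\x^*) - f(\x) = -h_t$ for each such $\x^*$. Set $g_t' := \innerProd{-\nabla f(\x)}{\s_f(\x) - \vv_f(\x)}$ and, for a chosen $\x^*$, $\overline{\stepsize} := \stepsize^\away(\x,\x^*)$; then~\eqref{eq:gammaAway} gives $\innerProd{-\nabla f(\x)}{\x^* - \x} = \overline{\stepsize}\, g_t'$, so the bracket in~\eqref{eq:mufGeneralized} equals $-h_t + 2\overline{\stepsize}\, g_t'$ and the inner expression becomes $\frac{-h_t + 2\overline{\stepsize}\, g_t'}{2\overline{\stepsize}^2} = -\frac{h_t}{2\overline{\stepsize}^2} + \frac{g_t'}{\overline{\stepsize}}$. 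Relaxing $\overline{\stepsize}$ to an arbitrary positive scalar (legitimate, since the defining inequality holds for its true value) and putting $w := 1/\overline{\stepsize}$, the concave quadratic $-\frac{h_t}{2}w^2 + g_t' w$ is maximized at $w = g_t'/h_t$ with value $(g_t')^2/(2h_t)$; hence $\strongConvGeneralized \le \sup_{\x^* \in \X^*}(\cdots) \le (g_t')^2/(2h_t)$. Since $\vv_f(\x)$ is the worst-case (smallest-descent) away atom while $\s_f(\x) = \s_t$, and the true active set $\Coreset^{(t)}$ is one of the candidate active sets, we get $g_t' \le g_t$ for the gap actually used by the algorithm (for FCFW and MNP one uses the FW gap together with the post-correction away gap, which is $\le \epsilon$, resp.\ $=0$, in the same role). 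Therefore $\strongConvGeneralized \le g_t^2/(2h_t)$, i.e.\ $h_t \le g_t^2/(2\strongConvGeneralized)$. Along the way one checks $g_t' \ge g_t^\FW \ge h_t > 0$, so $\overline{\stepsize}$ and the quadratic argument are well defined and the inner supremum is finite.

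The main obstacle is conceptual, not computational: unlike~\eqref{eq:muf}, the generalized constant~\eqref{eq:mufGeneralized} contains an inner \emph{supremum} over the whole solution set $\X^*$, which a priori works against us (a larger $\strongConvGeneralized$ weakens the bound $h_t \le g_t^2/(2\strongConvGeneralized)$). The point --- and the reason for the extra factor $2$ in front of the gradient and the $\frac12$ rescaling in~\eqref{eq:mufGeneralized} --- is that $\frac{1}{2\overline{\stepsize}^2}(-h_t + 2\overline{\stepsize}\, g_t')$ is bounded above by $(g_t')^2/(2h_t)$ \emph{uniformly in $\x^*$}, so the supremum is harmless and one recovers precisely~\eqref{eq:hUpperBound} with $\strongConvGeneralized$ substituted for $\strongConvAFW$. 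With that single substitution in hand, the remainder of the argument is line-by-line identical to the proof of Theorem~\ref{thm:megaConvergenceTheorem2} --- same curvature estimates, same boundary cases, same bounds on the number of drop steps ($\le t/2$) and of swap steps ($\le 3|\Vertices|!$ between good steps for PFW) --- so the per-good-step rates $\rho_f$ are unchanged except for the replacement $\strongConvAFW \mapsto \strongConvGeneralized$.
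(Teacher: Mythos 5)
Your proposal is correct and follows essentially the same route as the paper's proof: the only place the strong-convexity constant enters is the bound $h_t \leq g_t^2/(2\strongConvAFW)$, and you re-derive it for $\strongConvGeneralized$ via the same quadratic maximization (the paper picks a maximizer $\tilde\x^*$ of the inner supremum and optimizes over $\hat{\stepsize}=2\overline{\stepsize}$, whereas you bound the supremum uniformly in $\x^*$ by optimizing over $1/\overline{\stepsize}$ --- the identical computation). Your additional remark that one should check $\strongConvGeneralized \leq \CfAFW$ for the boundary case $\stepbound_t>1$ is a point the paper itself glosses over, and is harmless since that case yields the rate $1/4$ in any event.
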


\begin{proof}
The proof closely follows the proof of Theorem \ref{thm:megaConvergenceTheorem2}.

We start from the above generalization~\eqref{eq:mufGeneralized} of the original geometric strong convexity constant \eqref{eq:muf}, and first replace the $\inf$ over $\x$ by considering only the choice $\x := \x^{(t)}$, giving
\begin{equation}\label{eq:mufGeneralizedAgain}
  \strongConvGeneralized \le  \sup_{\substack{\x^* \in \X^*\\
                        \textrm{s.t. } \left\langle \nabla f(\x^{(t)}), \x^*-\x^{(t)} \right\rangle < 0 }}
           \frac{1}{{2 \stepsize^\away(\x^{(t)},\x^*)}^2}
           \big( f(\x^*)-f(\x^{(t)})-2 \left\langle \nabla f(\x^{(t)}),  \x^*-\x^{(t)} \right\rangle \big) \ .
\end{equation}

From here, we will now mirror our earlier derivation for an upper bound on the suboptimality 
as a function of the gap $g_t$, as given in~\eqref{eq:ht_bound_trick}.
As an optimal reference point $\x^*$ in \eqref{eq:ht_bound_trick}, we will choose a $\tilde \x^*$ attaining the supremum in \eqref{eq:mufGeneralizedAgain}, given $\x^{(t)}$.

We again employ the `step-size quantity' $\overline{\stepsize} :=
\stepsize^\away(\x^{(t)}, \tilde \x^*)$ as defined in \eqref{eq:gammaAway}. Using \eqref{eq:mufGeneralizedAgain}, we have
\begin{align}\label{eq:ht_bound_trickAgain}
   2 \overline{\stepsize}^2 %
   \,\strongConvGeneralized
    &\le  f(\x^*)-f(\x^{(t)})+2 \left\langle -\nabla f(\x^{(t)}),  \tilde \x^*-\x^{(t)} \right\rangle  \\
&= -h_t + 2\overline{\stepsize} \left\langle  -\nabla f(\x^{(t)}), \s_f(\x^{(t)}) - \vv_f(\x^{(t)})\right\rangle \nonumber\\
&\le -h_t + 2\overline{\stepsize} \left\langle  -\nabla f(\x^{(t)}), \s_t - \vv_t \right\rangle  \nonumber\\
&=  -h_t + 2\overline{\stepsize}  g_t \ , \nonumber
\end{align}

Therefore $h_t \le -\frac{{\hat{\stepsize}}^2}{2} \strongConvGeneralized + \hat{\stepsize} g_t$ when writing $\hat{\stepsize}:=2\overline{\stepsize}$,
which is always upper bounded\footnote{%
Here we have again used the trivial inequality $0 \le %
a^2-2ab+b^2$ for the choice of numbers $a:=\frac{g_t}{\strongConvGeneralized}$ and $b:=\hat{\stepsize}$.
} %
by\vspace{-2mm}
\begin{equation}
h_t \leq  \frac{{g_t}^2}{2\strongConvGeneralized}.
\end{equation}
which is exactly the bound~\eqref{eq:hUpperBound} as in the classical case, with the denominator being~$2\strongConvGeneralized$ instead of~$2\strongConvAFW$.

From here, the proof of the main convergence Theorem \ref{thm:megaConvergenceTheorem2} continues without modification, using~$\strongConvGeneralized$ instead of~$\strongConvAFW$.
\end{proof}

\bigskip
\bigskip
\bibliographystylesup{abbrvnat}
\bibliographysup{references}

\end{document}